\documentclass[11pt, a4paper]{article}
\usepackage[utf8]{inputenc}
\usepackage[usenames,dvipsnames]{xcolor}
\usepackage{amsmath}
\usepackage{amsthm}
\usepackage{amsfonts}
\usepackage{amssymb}
\usepackage{array}
\usepackage{graphicx} 
\usepackage{color}
\usepackage[english]{babel}
\usepackage{mathrsfs}
\usepackage{graphicx}
\usepackage{dsfont}
\definecolor{orangebis}{rgb}{0.99,0.25,0.00}
\definecolor{greenbis}{rgb}{0.10,0.85,0.10}
\definecolor{bluebis}{rgb}{0.10,0.30,0.99}
\usepackage[final]{hyperref}   
\hypersetup{
    linktoc=page,
    linkcolor=red,          
    citecolor=blue,        
    filecolor=blue,      
    urlcolor=cyan,
   colorlinks=true      }     

\usepackage{multicol}

\usepackage{lipsum}
\usepackage{geometry}
\author{Hugo Vanneuville\thanks{Univ. Lyon 1, Institut Camille Jordan, 69100 Villeurbanne, France, supported by the ERC grant Liko No 676999}}
\title{Quantitative quenched Voronoi percolation and applications}
\date{}

\theoremstyle{plain}
\newtheorem{thm}{Theorem}[section]
\newtheorem{prop}[thm]{Proposition}
\newtheorem{lem}[thm]{Lemma}


\theoremstyle{definition}
\newtheorem{defi}[thm]{Definition}

\theoremstyle{remark}
\newtheorem{rem}[thm]{Remark}

\theoremstyle{remark}
\newtheorem{notation}[thm]{Notation}

\marginparsep = -0.4 cm
\marginparwidth = 2.3 cm


\newcommand{\margin}[1]{\textcolor{magenta}{*}\marginpar{ \vskip -1cm \textcolor{magenta} {\it #1 }  }}

\renewcommand{\margin}[1]{}

\newcommand{\N}{\mathbb{N}}
\newcommand{\R}{\mathbb{R}}

\newcommand{\Z}{\mathbb{Z}}

\newcommand{\diam}{\text{\textup{diam}}}
\newcommand{\Pro}{\mathbb{P}}
\newcommand{\E}{\mathbb{E}}

\newcommand{\Var}{\mathbb{V}\text{\textup{ar}}}

\newcommand{\Prob}{\text{\textup{\textbf{P}}}}
\newcommand{\Ex}{\text{\textup{\textbf{E}}}}
\newcommand{\Piv}{\text{\textup{\textbf{Piv}}}}
\newcommand{\arm}{\text{\textup{\textbf{A}}}}
\newcommand{\cross}{\textup{\text{Cross}}}
\newcommand{\dense}{\textup{\text{Dense}}}
\newcommand{\Circ}{\textup{\text{Circ}}}
\newcommand{\qbc}{\textup{\text{QBC}}}

\newcommand{\gp}{\textup{\text{GP}}}

\newcommand{\setS}{\text{\textup{\textbf{S}}}}

\newcommand{\un}{\mathds{1}}

\newcommand{\grandO}[1]{O\mathopen{}\left(#1\right)}

\newcommand{\cond}{\, \Big| \,}
\renewcommand{\textbf}[1]{\begingroup\bfseries\mathversion{bold}#1\endgroup}
\setlength{\parindent}{0pt}

\def\diam{\mathrm{diam}}

\def\dist{\mathrm{dist}}

\def\E{\mathbb{E}} 

\def \eps {\epsilon}


\def\<#1{\langle #1\rangle}
\newcommand{\red}[1]{\textcolor{black}{#1}}
\newcommand{\blue}[1]{\textcolor{black}{#1}}

\def\bi{\begin{itemize}}  
\def\ei{\end{itemize}}
\def\bnum{\begin{enumerate}} 
\def\enum{\end{enumerate}}
\def\ni{\noindent}
\def\bf{\bfseries}

\geometry{hmargin=2.5cm,vmargin={3cm,3.5cm},nohead}
\numberwithin{equation}{section}
\setcounter{tocdepth}{2}

\begin{document}

\maketitle

\abstract{Ahlberg, Griffiths, Morris and Tassion have proved that, asymptotically almost surely, the quenched crossing probabilities for critical planar Voronoi percolation do not depend on the environment. We prove an analogous result for arm events. In particular, we prove that the variance of the quenched probability of an arm event is at most a constant times the square of the annealed probability. The fact that the arm events are degenerate and non-monotonic add two major difficulties. As an application, we prove that there exists $\epsilon > 0$ such that the following holds for the annealed percolation function $\theta^{an}$:
\[
\forall p > 1/2 ,\, \theta^{an}(p) \geq \epsilon (p-1/2)^{1-\epsilon} \, .
\]
One of our motivations is to provide tools for a spectral study of Voronoi percolation.
} 


\tableofcontents

\section{Introduction}

\subsection{Main results}

\textbf{Planar Voronoi percolation} is a percolation model in random environment defined as follows (for more details, see for instance~\cite{bollobas2006critical,bollobas2006percolation} or the introduction of~\cite{scaling_voro}):

Let $p \in [0,1]$ and let $\eta$ be a homogeneous Poisson point process in $\R^2$ with intensity $1$. For each $x \in \eta$, let $C(x)=\{ u \in \R^2 \, : \, \forall y \in \eta, \, ||x-u||_2 \leq ||y-u||_2 \}$ be the Voronoi cell of $x$. \blue{We say that $x$ is the center of $C(x)$.} Note that a.s.\ all the Voronoi cells are bounded convex polygons. Given $\eta$, colour each cell in black with probability $p$ and in white with probability $1-p$, independently of the other cells. One thus obtains a random colouring of the plane. We write $\omega \in \{ -1,1 \}^\eta$ for the corresponding coloured configuration where $1$ means black and $-1$ means white, and we let $\Pro_p$ be the law of $\omega$. Let us be more precise about measurability issues. Let $\Omega'$ denote the set of locally finite subsets of $\R^2$ and let $\Omega=\cup_{\overline{\eta} \in \Omega'} \{-1,1\}^{\overline{\eta}}$. We equip $\Omega$ with the $\sigma$-algebra generated by the functions $\overline{\omega} \in \Omega \mapsto |\overline{\omega}^{-1}(1) \cap A|$ and $\overline{\omega} \in \Omega \mapsto |\overline{\omega}^{-1}(-1) \cap A|$ where $A$ spans the Borel subsets of the plane. The measure $\Pro_p$ is defined on this $\sigma$-algebra. 

We write $\{ 0 \leftrightarrow \infty \}$ for the event that there is a black path from $0$ to $\infty$ and we let $\theta^{an}(p)$ denote the \textbf{annealed percolation function} i.e.
\[
\theta^{an}(p)=\Pro_p \left[ 0 \leftrightarrow \infty \right] \, .
\]
The critical point of Voronoi percolation is
\[
p_c= \inf \{p \, : \, \theta^{an}(p) > 0 \} \, .
\]
Bollob\'{a}s and Riordan~\cite{bollobas2006critical} have proved that $p_c=1/2$. Duminil-Copin, Raoufi and Tassion~\cite{duminil2017exponential} have recently given an alternative proof of this result (and have even proved sharpness of Voronoi percolation in any dimension). The proof by Bollob\'{a}s and Riordan highly relies on a ``weak'' box-crossing property. A stronger box-crossing property has then been obtained by Tassion~\cite{tassion2014crossing}, see Theorem~\ref{t.tassion} below.

\blue{In the present paper, we are interested in \textit{quenched properties}. The quenched probability of an event is the probability of this event conditionally on the environment - i.e.\ conditionally on $\eta$. The annealed probability is the probability without any conditioning.} In~\cite{benjamini1999noise}, Benjamini, Kalai and Schramm have conjectured that, with high probability, the quenched crossing probabilities are very close to the annealed crossing probabilities. Ahlberg, Griffiths, Morris and Tassion have answered positively this conjecture in~\cite{ahlberg2015quenched}, see Theorem~\ref{t.AGMT} below. The results from~\cite{ahlberg2015quenched} provide very useful tools, that were for instance crucial in our work~\cite{scaling_voro} in which we have proved some scaling relations for Voronoi percolation. In the present paper, we pursue the work of~\cite{ahlberg2015quenched} by proving an analogue of their main theorem for \textbf{arm events} and by making their main result more quantitative. As consequences of the extension of~\cite{ahlberg2015quenched} to arm events, we will prove estimates on $4$-arm events and deduce a strict inequality for $\theta^{an}(p)$.
\medskip

Let us state the box-crossing results from~\cite{tassion2014crossing} and~\cite{ahlberg2015quenched}.
\begin{defi}
\bi 
\item[i)] For any $\lambda_1,\lambda_2 > 0$, $\cross(\lambda_1,\lambda_2)$ is the event that there is a black crossing of the rectangle $[-\lambda_1,\lambda_1] \times [-\lambda_2,\lambda_2]$ from left to right \blue{i.e.\ a black continuous path connecting the left side and the right side.}
\item[ii)] Given $\eta$, $\Prob^\eta_p$ is the conditional distribution of $\omega$ given $\eta$ i.e.\ $\Prob^\eta_p = \left( p\delta_1+(1-p)\delta_{-1}\right)^{\otimes \eta}$. \red{(More rigorously, for any measurable set $A \subseteq \Omega$, $\Pro_p [ A | \eta] = \Prob^\eta_p [ A \cap \{-1,1\}^\eta ]$.)} More generally, if $E$ is a countable set, we write $\Prob_p^E=\left( p\delta_1+(1-p)\delta_{-1}\right)^{\otimes E}$.
\ei
\end{defi}

\blue{By duality, $\Pro_{1/2} \left[\cross(n,n) \right]=1/2$ (to prove this, one needs to use that a.s.\ all the vertices of the Voronoi tiling have degree $3$)}. The following result is the annealed box-crossing property proved by Tassion:

\begin{thm}[Theorem~3 of~\cite{tassion2014crossing}]\label{t.tassion}
For every $\lambda \in (0,+\infty)$, there exists $c=c(\lambda) \in (0,1)$ such that, for every $R \in (0,+\infty)$,
\[
c \leq \Pro_{1/2} \left[ \cross(\lambda R,R) \right] \leq 1-c \, .
\]
\end{thm}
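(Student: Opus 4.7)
The plan is to first control crossings of squares using self-duality at $p=1/2$, and then to extend the estimate to rectangles of arbitrary aspect ratio by an RSW-type gluing, being careful about the complications introduced by the random Voronoi environment.

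For squares, I would first argue that $\Pro_{1/2}[\cross(R,R)] = 1/2$. The Poisson point process $\eta$ is invariant under the $\pi/2$ rotation about the origin, and at $p=1/2$ the color-swap symmetry exchanges black and white. Together these symmetries identify $\Pro_{1/2}[\cross(R,R)]$ with the probability of a white top-to-bottom crossing of $[-R,R]^2$. Planar duality (the non-existence of a black horizontal crossing is almost surely equivalent to the existence of a white vertical crossing) then forces both probabilities to equal $1/2$.

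To pass from squares to rectangles with aspect ratio $\lambda > 1$, I would use a classical gluing scheme. Conditionally on $\eta$ the coloring is a product measure, so FKG holds for increasing events in the coloring and passes to the annealed law $\Pro_{1/2}$. Using that square crossings exist with probability $1/2$ in each direction, I would combine two overlapping horizontal crossings of the squares $[0,R]^2$ and $[R/2, 3R/2] \times [0,R]$ with a vertical crossing of the middle strip $[R/2, R] \times [0,R]$ to obtain a horizontal crossing of $[0, 3R/2] \times [0,R]$ with probability bounded below. Iterating this construction yields any fixed $\lambda$, and the matching upper bound $\Pro_{1/2}[\cross(\lambda R, R)] \leq 1-c$ follows by applying the lower bound to the dual event (a white vertical crossing of the transposed rectangle) together with color-swap symmetry.

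The main obstacle, and the reason the theorem was not obvious before Tassion's work, is that Voronoi cells can straddle the boundaries between the overlapping squares used in the gluing, so ``independence'' between disjoint regions is genuinely absent even after conditioning on the coloring alone. In the lattice Bernoulli setting this issue does not arise. I would attempt to handle it by exploiting the exponential tail on the diameter of a typical Voronoi cell: with overwhelming probability all cells meeting a fixed compact set have diameter at most $\log R$, so one can condition on the Poisson configuration in a thin buffer of width $\log R$ around each sub-rectangle to recover an effective independence structure between the regions, at the cost of a small error that does not affect the quantitative conclusion. This decoupling is the step I expect to consume most of the real work; Tassion's actual argument bypasses the issue altogether through a delicate monotonicity argument based on a carefully chosen pivotal structure, which is a substantially more elegant route to the same conclusion.
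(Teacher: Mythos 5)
The paper does not prove this statement; it is imported verbatim as Theorem~3 of Tassion's paper, so your proposal must be judged on its own merits. The self-duality step is fine: almost surely the Voronoi tessellation is a non-degenerate planar tessellation, so the absence of a black left-right crossing of $[-R,R]^2$ is equivalent to the presence of a white top-bottom crossing, and colour symmetry at $p=1/2$ together with rotation invariance of $\eta$ gives $\Pro_{1/2}[\cross(R,R)]=1/2$.

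The gluing step, however, contains a genuine circularity that would sink the argument even in the i.i.d.\ Bernoulli lattice setting, independently of any Voronoi-specific decoupling issues. Your construction requires a lower bound on the probability of a \emph{vertical} crossing of the middle strip $[R/2,R]\times[0,R]$, i.e.\ a crossing of an $(R/2)\times R$ rectangle in its \emph{long} direction. No such bound follows from the square estimate: the square-crossing probability $1/2$ only controls aspect ratio $1$, and producing a uniform lower bound for crossings in the hard direction of any rectangle with aspect ratio strictly greater than $1$ is precisely the content of the Russo--Seymour--Welsh theorem you are trying to prove. (FKG then trivially bootstraps ratio $3/2$ to any fixed $\lambda$, but the first step from ratio $1$ to ratio strictly above $1$ is the whole difficulty.) You have also misdiagnosed where the Voronoi-specific difficulty lies: the cell-diameter/buffer decoupling you sketch is a manageable technicality, whereas the classical route past the circularity --- Russo's and Seymour--Welsh's lowest-crossing/reflection arguments --- breaks for Voronoi percolation because conditioning on the lowest black crossing reveals information about $\eta$ itself, destroying the symmetry and conditional independence those arguments need. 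Tassion's proof replaces this with a new construction (choosing a random landing segment on the boundary of the square by a symmetry and square-root-trick argument) that uses only FKG and the invariances of the annealed law; there is no known way to shortcut it along the lines you propose.
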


In~\cite{ahlberg2015quenched}, the authors prove a quenched box-crossing property in the case where $\eta$ is obtained by sampling $n$ independent uniform points in a rectangle. As mentionned in~\cite{ahlberg2015quenched} (see also Appendix~B of~\cite{scaling_voro}), the proof in the case where $\eta$ is a Poisson point process in $\R^2$ is essentially the same and we have the following:
\begin{thm}[\cite{ahlberg2015quenched}]\label{t.AGMT}
Let $\lambda > 0$. There exist an absolute constant $\epsilon > 0$ and a constant $C = C(\lambda) < +\infty$ such that, for every $R \in [1,+\infty)$,
\[
\Var \left( \Prob^\eta_{1/2} \left[ \text{\textup{Cross}}(\lambda R,R) \right] \right) \leq C R^{-\epsilon} \, .
\]
\end{thm}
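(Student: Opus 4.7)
The plan is a Poisson Efron--Stein decomposition of the variance, combined with a quantitative four-arm estimate extracted from Theorem~\ref{t.tassion}. Let $f(\eta) := \Prob_{1/2}^\eta[\cross(\lambda R, R)]$.

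First I tile an enlargement of $[-\lambda R, \lambda R] \times [-R, R]$ into unit boxes $\{B_i\}_{i=1}^N$ with $N = O(R^2)$; points of $\eta$ further out contribute negligibly by Poisson concentration on cell diameters. Since the restrictions $\eta|_{B_i}$ are independent, the Efron--Stein inequality gives $\Var(f(\eta)) \leq \tfrac{1}{2} \sum_i \E[(f(\eta) - f(\eta^{(i)}))^2]$, where $\eta^{(i)}$ denotes $\eta$ with $\eta|_{B_i}$ replaced by an independent Poisson sample. For each $i$, I couple the colourings $\omega$ and $\omega^{(i)}$ so that cells whose shape is preserved between the two tessellations share their colour. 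Then the two crossing outcomes can disagree only if the colouring inside the (Poisson-bounded) influence zone of $B_i$ is pivotal, which in turn forces an annealed four-arm event from a neighbourhood of $B_i$ to the boundary of the rectangle. Passing to two i.i.d.\ colourings of the same environment (which yields conditionally independent pivotalities) and applying a BK/Reimer-type disjoint-occurrence bound reduces the expected squared gap to $C(\alpha_4^{\mathrm{an}}(1, R))^2$, where $\alpha_4^{\mathrm{an}}(r, R)$ denotes the annealed four-arm probability from scale $r$ to scale $R$.

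The quantitative estimate $\alpha_4^{\mathrm{an}}(1, R) \leq C R^{-1-\epsilon_0}$ for a universal $\epsilon_0 > 0$ now follows from Theorem~\ref{t.tassion} by Kesten's standard argument: the five-arm exponent is universally equal to $2$ (by summing pivotal squares in a box against Tassion's RSW), and a gap argument produces a strict $\epsilon_0 > 0$ between the four- and five-arm exponents. Combining the ingredients,
\begin{equation*}
\Var(f(\eta)) \leq C \cdot N \cdot \big(\alpha_4^{\mathrm{an}}(1, R)\big)^2 \leq C\,R^{-2\epsilon_0},
\end{equation*}
which proves the theorem with $\epsilon := 2\epsilon_0$.

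The main obstacle is the squared pivotality bound. The naive pointwise estimate $|f(\eta) - f(\eta^{(i)})| \leq \Prob_\omega[\text{4-arm near } B_i \mid \eta]$ only controls $\E[(f - f^{(i)})^2]$ by the \emph{first} moment of the quenched 4-arm probability, i.e.\ by $\alpha_4^{\mathrm{an}}$, leading to the non-decaying estimate $\Var(f) \lesssim R^{1-\epsilon_0}$. Upgrading to the squared bound $(\alpha_4^{\mathrm{an}})^2$ requires the two-colouring reduction described above, combined with a BK/Reimer-style argument for two disjoint arm events in the common Voronoi environment---sidestepping the otherwise circular need for a quenched-annealed comparison of the four-arm probability itself, which is precisely the type of question addressed in the body of the present paper.
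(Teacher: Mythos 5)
Your overall architecture (an Efron--Stein/martingale decomposition over a tiling into boxes, reducing the variance to $\sum_i \E\left[ \Prob^\eta_{1/2}\left[ \Piv_{B_i}(\cross(\lambda R,R)) \right]^2 \right]$, then a sum over $\grandO{R^2}$ boxes against the square of a four-arm probability with exponent strictly larger than $1$) is exactly the right skeleton: it is Proposition~\ref{p.martingale} followed by the summation carried out in Sections~\ref{s.asymp} and~\ref{s.quant}. But there is a genuine gap at the step you yourself flag as ``the main obstacle''. You claim that passing to two conditionally i.i.d.\ colourings $\omega^1,\omega^2$ of the \emph{same} environment $\eta$ and applying ``a BK/Reimer-type disjoint-occurrence bound'' reduces $\E\left[ \Prob^\eta_{1/2}\left[\Piv_{B_i}\right]^2\right]$ to $C\,\alpha_4^{an}(1,R)^2$. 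No such tool exists. Reimer's inequality bounds the probability of a disjoint occurrence \emph{within a single colouring of a fixed environment} from above by the product of quenched probabilities; here the two four-arm events live in two different colourings and occur on the full (shared) point set, so there is nothing disjoint to exploit, and the two events are positively correlated precisely through the common environment. Indeed $\E\left[\Prob^\eta\left[A\right]^2\right] \geq \left(\E\left[\Prob^\eta\left[A\right]\right]\right)^2$ always, with equality only when the quenched probability is deterministic, so the inequality you want is exactly the statement that the quenched four-arm probability does not fluctuate --- which is (the $j=4$ case of) the main theorem~\eqref{e.main2} of this paper and the central difficulty of~\cite{ahlberg2015quenched}. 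A generic decoupling argument cannot exist: as noted in the paper, for an event $A$ depending only on $\eta$ one has $\Pro\left[A \square A\right]=\Pro\left[A\right]>\Pro\left[A\right]^2$.

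The two known ways around this are instructive. Ahlberg, Griffiths, Morris and Tassion bound $\sum_i \E\left[\Prob^\eta\left[\Piv_{B_i}\right]^2\right]$ by $\E\left[\bigl(\max_i \Prob^\eta\left[\Piv_{B_i}\right]\bigr)\sum_i \Prob^\eta\left[\Piv_{B_i}\right]\right]$, control the maximum by a quenched RSW-type polynomial bound, and control the total quenched influence $\sum_i \Prob^\eta\left[\Piv_{B_i}\right]$ by the Schramm--Steif randomized-algorithm theorem --- a step that crucially uses the \emph{monotonicity} of the crossing event and yields only $R^{-\epsilon}$ rather than $R^2\alpha_4^{an}(R)^2$. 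The present paper instead bounds each $\E\left[\Prob^\eta\left[\Piv_S\right]^2\right]$ directly by products of second-moment arm quantities $\widetilde{\alpha}_k$ (Lemmas~\ref{l.piv_arm_1}--\ref{l.piv_arm_5}, imported from Appendix~D of~\cite{scaling_voro}) and then runs a bootstrap to show $\widetilde{\alpha}_k \asymp \alpha_k^{an}$; neither route is a soft consequence of Theorem~\ref{t.tassion}. Relatedly, your final ingredient $\alpha_4^{an}(1,R)\leq CR^{-1-\epsilon_0}$ is itself not a ``standard Kesten argument'' here (the annealed five-arm universality and the gap argument require the translation invariance of the annealed measure plus quenched separation-of-arms inputs, cf.\ Propositions~\ref{p.universal} and~\ref{p.alpha_4_from_SS}), and in any case what your decomposition actually requires is the second-moment version $\widetilde{\alpha}_4(R)\leq CR^{-1-\epsilon_0}$ of Proposition~\ref{p.alpha_4}, which again presupposes the quenched--annealed comparison you were trying to avoid.
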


\paragraph{Main results.} In the present paper, we prove an analogue of Theorem~\ref{t.AGMT} for arm events. Let us first define these events. Let $j \in \N^*$ and $0 \leq r \leq R$. The $j$-arm event from distance $r$ to distance $R$ is the event that there exist $j$ paths of alternating colors in the annulus $[-R,R]^2 \setminus (-r,r)^2$ from $\partial [-r,r]^2$ to $\partial [-R,R]^2$ (if $j$ is odd, we ask that there are: (a) $j-1$ paths of alternating color, and (b) one additional black path such that there is no Voronoi cell intersected by both this additional path and one of the $j-1$ other paths). Let $\arm_j(r,R)$ denote this event. \blue{(If $r > R$, we let $\arm_j(r,R)$ be the sure event.)} The annealed probability of $\arm_j(r,R)$ is denoted by
\[
\alpha^{an}_{j,p}(r,R) = \Pro_p \left[ \arm_j(r,R) \right] \, .
\]
We will use the simplified notation $\alpha_{j,p}^{an}(R)=\alpha_{j,p}^{an}(1,R)$. Our main theorem is the following:
\begin{thm}\label{t.quenched_arm}
Let $j \in \N^*$. There exists a constant $C = C(j) < +\infty$ such that, for every $r,R \in [1,+\infty)$ that satisfy $r \leq R$, we have
\begin{equation}\label{e.main2}
\alpha_{j,1/2}^{an}(r,R)^2 \leq \E \left[ \Prob^\eta_{1/2} \left[ \arm_j(r,R) \right]^2 \right] \leq C \, \alpha_{j,1/2}^{an}(r,R)^2 \, .
\end{equation}
Let also $a \in (0,1)$. There exists a constant $C'=C'(j,a)<+\infty$ such that, if we assume furthermore that $r \leq aR$, then
\begin{multline}\label{e.main}
\E \left[ \Prob^\eta_{1/2} \left[ \arm_j(r,R) \right]^2 \right] - \alpha^{an}_{j,1/2}(r,R)^2 = \Var \left( \Prob^\eta_{1/2} \left[ \arm_j(r,R) \right] \right)\\
\leq C' \, \alpha_{j,1/2}^{an}(r,R)^2 \, r^2 \, \alpha_{4,1/2}^{an}(r)^2 \, .
\end{multline}
\end{thm}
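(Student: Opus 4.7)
The left inequality in~\eqref{e.main2} is just Jensen applied to $X = \Prob^\eta_{1/2}[\arm_j(r,R)]$: $\E[X^2] \geq (\E X)^2 = \alpha_{j,1/2}^{an}(r,R)^2$. The content of the theorem lies in the upper bounds. My plan is to prove~\eqref{e.main} first and then deduce the upper bound in~\eqref{e.main2} from it, handling the case $r$ close to $R$ (where the arm event is a macroscopic event at a single scale) by a direct argument that invokes Theorems~\ref{t.tassion} and~\ref{t.AGMT}.

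For~\eqref{e.main}, I would set up an Efron--Stein / Poincar\'e--type variance decomposition for Poisson functionals. Tile $\R^2$ by unit boxes $(B_k)$ and, for each $k$, let $\eta^{(k)}$ denote the configuration obtained by independently resampling $\eta \cap B_k$; the Poincar\'e inequality gives
\[
\Var(X) \;\leq\; \sum_k \E\!\left[\bigl(X(\eta) - X(\eta^{(k)})\bigr)^2\right] .
\]
Boxes deep inside $[-r,r]^2$ or far outside $[-R,R]^2$ do not affect the tessellation in the annulus $[-R,R]^2 \setminus [-r,r]^2$ where the arm paths live, and their contribution is $0$. For the remaining boxes, the change $X(\eta) - X(\eta^{(k)})$ is controlled by the quenched probability of a local pivotality for $\arm_j(r,R)$, which by the standard ``pivotal implies 4-arm'' argument is bounded by a quenched 4-arm probability at the local scale $\rho_k$ (the distance from $B_k$ to the nearest arm endpoint). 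I would then use Theorem~\ref{t.AGMT} to replace quenched crossings at scales $\geq \rho_k$ by their annealed counterparts (up to constant factors), combined with a quenched separation-of-arms lemma, to bound each term by $C \, \alpha_j^{an}(r,R)^2 \, \alpha_4^{an}(\rho_k)^2$. Summing over the annulus in dyadic shells, and using that the expected $\alpha_4$ exponent exceeds $1$ so that the sum is dominated by its smallest scale, the contributions concentrate on $\rho_k \sim r$ and yield the factor $r^2 \alpha_4^{an}(r)^2$.

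The main obstacle I foresee is the non-monotonicity of arm events, which precludes any direct use of FKG/BK and forces the separation-of-arms machinery -- classically developed for monotone events on a lattice (Kesten, Nolin) -- to be redone in the Voronoi and quenched setting. Concretely, the central technical lemma would be a quenched separation-of-arms statement robust enough that (i) the fluctuation of $\Prob^\eta[\arm_j(r,R)]$ under a local resampling of $\eta$ at scale $\rho$ is controlled by $\alpha_4^{an}(\rho)$ times $\alpha_j^{an}(r,R)$, so that its square contributes $\alpha_4^{an}(\rho)^2 \, \alpha_j^{an}(r,R)^2$ rather than the weaker $\alpha_4^{an}(\rho) \, \alpha_j^{an}(r,R)$; and (ii) for the upper bound in~\eqref{e.main2}, the probability that two independent colorings on the same $\eta$ simultaneously realise a $j$-arm configuration is bounded by $C \, \alpha_j^{an}(r,R)^2$, by extracting $2j$ essentially disjoint arms from the two overlapping $j$-arm configurations. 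A secondary difficulty is to correctly define and integrate the ``local scale'' $\rho_k$ so that the final sum is independent of $R$; this requires applying Theorem~\ref{t.AGMT} at every dyadic scale between $r$ and $R$ to absorb the bulk of the annulus.
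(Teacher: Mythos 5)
Your skeleton (an Efron--Stein/martingale decomposition over boxes, ``pivotal implies four arms'', dyadic summation using that the four-arm exponent exceeds $1$) is the right one and matches the paper's Proposition~\ref{p.martingale} and Lemmas~\ref{l.piv_arm_1}--\ref{l.piv_arm_5}. But your plan to prove~\eqref{e.main} \emph{first} runs into a circularity that you do not resolve. The variance decomposition produces terms of the form $\E \left[ \Prob^\eta \left[ \Piv_S(\arm_j(r,R)) \right]^2 \right]$, and the separation-of-arms machinery naturally bounds these by products of \emph{second-moment} quantities $\widetilde{\alpha}_k(\cdot,\cdot)^2 = \E \left[ \Prob^\eta \left[ \arm_k(\cdot,\cdot) \right]^2 \right]$ (a Cauchy--Schwarz is unavoidable when you factor the quenched probability over disjoint regions). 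Converting those into the annealed products $\alpha_j^{an}(r,R)^2\,\alpha_4^{an}(\rho)^2$ that you want \emph{is exactly}~\eqref{e.main2}, applied to $4$-arm, half-plane $3$-arm, etc., events at all intermediate scales. Theorem~\ref{t.AGMT} cannot do this job: it concerns crossing events only, and the whole difficulty of the theorem is that arm events are degenerate and non-monotonic, so the quenched-to-annealed comparison for them is precisely what must be proved. The paper escapes this by a bootstrap: first bound everything in terms of $\widetilde{\alpha}$'s, using only the a priori one-scale upper bounds $\widetilde{\alpha}_4(\rho) \leq O(1)\rho^{-(1+\epsilon)}$ and $\widetilde{\alpha}_3^{+}(\rho,\rho') \leq O(1)\rho/\rho'$ imported from~\cite{scaling_voro}, to get $\Var(\Prob^\eta[\arm_j(r,R)]) \leq O(1)\, r^{-\epsilon}\, \widetilde{\alpha}_j(r,R)^2$; since the left side equals $\widetilde{\alpha}_j(r,R)^2 - \alpha_j^{an}(r,R)^2$, this self-improves to $\widetilde{\alpha}_j \leq 2\alpha_j^{an}$ for $r$ large, and quasi-multiplicativity handles small $r$. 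Only then can the computation be redone with annealed quantities to obtain~\eqref{e.main}. Without this (or an equivalent) bootstrap your argument does not close.

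Two further points. First, your item (ii) --- bounding $\E\left[\Prob^\eta\left[\arm_j(r,R)\right]^2\right]$ by ``extracting $2j$ essentially disjoint arms from two overlapping $j$-arm configurations'' --- cannot work: the second moment is at least $\alpha_j^{an}(r,R)^2$ by Jensen, which for $j\geq 1$ is expected to be much larger than $\alpha_{2j}^{an}(r,R)$, so no disjointness-based bound of that kind can hold. Second, two substantive pieces of work are missing from your outline: (a) the boxes near $\partial B_r$ and $\partial B_R$ (and those outside the annulus, which \emph{do} affect the tessellation inside it) require half-plane, quarter-plane and complement-of-quarter-plane $3$-arm estimates, including the bound $\alpha_3^{an,(++)^c}(\rho,\rho') \leq O(1)\rho/\rho'$ which the paper proves separately (Lemma~\ref{l.()c}); and (b) the claim that ``the sum is dominated by its smallest scale'' in the quantitative bound~\eqref{e.main} needs the \emph{multiscale} estimate $\alpha_4^{an}(\rho,\rho') \leq O(1)(\rho/\rho')^{1+\delta}$, not just the one-scale Proposition~\ref{p.alpha_4}; this is Proposition~\ref{p.alpha_4_from_SS}, itself a nontrivial quenched adaptation of Garban's argument that occupies Section~\ref{s.other}.
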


\begin{rem}
The estimate~\eqref{e.main2} of Theorem~\ref{t.quenched_arm} is a direct consequence of~\eqref{e.main} and of an estimate on the $4$-arm events proved in~\cite{scaling_voro} (see Proposition~\ref{p.alpha_4} of the present paper). However, our strategy will be to first prove~\eqref{e.main2} and then deduce~\eqref{e.main}.
\end{rem}

The new difficulties compared to the work~\cite{ahlberg2015quenched} are the fact that the arm events are \textbf{degenerate} and (except for $j=1$) \textbf{non-monotonic}. The fact that the crossing events are monotonic was crucial in~\cite{ahlberg2015quenched}, especially in Section~$2$ where the authors prove an Efron-Stein estimate by revealing the position of the points of $\eta$ one after the other, and in their final section where they use the Schramm-Steif randomized algorithm theorem \cite{schramm2010quantitative} \blue{in order to estimate the sum of squares of influences.}\footnote{\blue{Consider the hypercube $\{-1,1\}^n$ equipped with the uniform probability measure and let $A \subseteq \{-1,1\}^n$. The influence of a coordinate $i \in \{1,\cdots,n\}$ is the probability that, if we change the value of the $i^{th}$ coordinate, then this modifies the Boolean function $\un_A$. The Schramm-Steif theorem is an estimate about the Fourier decomposition of Boolean functions. This theorem also holds for non-monotonic functions. However, the connection between the influences and the Fourier spectrum that is used in \cite{ahlberg2015quenched} is only true for monotonic functions.}} To deal with these new difficulties, we will have to use very precise estimates on the \textbf{pivotal events}. By doing so, we will also obtain the following more quantitative version of Theorem~\ref{t.AGMT}:
\begin{thm}\label{t.quenched_cross}
Let $\lambda > 0$. There exists a constant $C = C(\lambda) < +\infty$ such that, for every $R \in (0,+\infty)$,
\begin{equation}\label{e.maincross}
\Var \left( \Prob^\eta_{1/2} \left[ \text{\textup{Cross}}(\lambda R,R) \right] \right) \leq C R^2 \, \alpha_{4,1/2}^{an}(R)^2 \, .
\end{equation}
\end{thm}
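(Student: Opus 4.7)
The plan is to carry out the same Efron--Stein strategy that Ahlberg, Griffiths, Morris and Tassion used for Theorem~\ref{t.AGMT}, but to replace their soft a priori input by the quantitative quenched arm estimate just established in Theorem~\ref{t.quenched_arm} (applied with $j=4$). The first step is localization: choose a square $\Lambda$ of side length $\asymp R$ large enough that the Voronoi diagram on $[-\lambda R,\lambda R]\times[-R,R]$ is determined by $\eta \cap \Lambda$ with probability $1 - O(R^{-100})$. Conditionally on $|\eta \cap \Lambda| = n$, the $n$ Poisson points are i.i.d.\ uniform in $\Lambda$, and the Efron--Stein inequality gives
\[
\Var\!\left(\Prob^\eta_{1/2}[\cross(\lambda R, R)]\right) \leq \tfrac{1}{2} \sum_{i=1}^n \E\!\left[\bigl(\Prob^{\eta}_{1/2}[\cross] - \Prob^{\eta^{(i)}}_{1/2}[\cross]\bigr)^2\right],
\]
where $\eta^{(i)}$ is obtained by resampling the $i$th uniform point.

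For the core estimate I would expand the square using two independent coloring copies $\omega_1,\omega_2$ of $\eta$, coupled with colorings of $\eta^{(i)}$ so that only cells in the small neighbourhood $\calU_i$ affected by moving $x_i$ differ. The squared quenched change is then bounded by the probability that $\calU_i$ is pivotal for the crossing in \emph{both} $\omega_1$ and $\omega_2$. A direct Voronoi argument (analogous to the pivotal/arm correspondence used in~\cite{scaling_voro}) shows that this event forces a polychromatic $4$-arm configuration centred in a small disc around $x_i$, running out to scale $d_i := \dist(x_i, \partial[-\lambda R,\lambda R]\times[-R,R])$, in both colorings. By Theorem~\ref{t.quenched_arm} the annealed probability of this two-coloring event is at most $C \,\alpha_{4,1/2}^{an}(d_i)^2$, multiplied by the $O(1)$ probability of extending from scale $d_i$ out to the appropriate sides of the rectangle, where for points near the boundary one uses half-plane three-arm and quarter-plane two-arm estimates in place of the standard four-arm.

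The final step is to sum over $i$. Taking expectations and integrating against Lebesgue measure on $\Lambda$, I would split the domain into bulk points (with $d_i \asymp R$, yielding the main contribution $R^2 \alpha_{4,1/2}^{an}(R)^2$), boundary-edge strips, and corner regions; quasi-multiplicativity of the $4$-arm function (Proposition~\ref{p.alpha_4_2}) combined with the half-plane arm estimates above ensures that every layer contributes at most $O(R^2 \alpha_{4,1/2}^{an}(R)^2)$, giving the claimed bound. The main obstacle is the Voronoi-specific step in the middle paragraph: resampling a single Poisson point changes several cells at once, not a single bit, so one must carefully define $\calU_i$, bound its number of cells, and rigorously translate ``some configuration change in $\calU_i$ flips $\cross$'' into a genuine four-arm event suitable for Theorem~\ref{t.quenched_arm}. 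The non-monotonicity of that resampling is precisely what forces the introduction of the second independent coloring $\omega_2$, and this is also exactly why the bound comes out as $\alpha_{4,1/2}^{an}(R)^2$ rather than $\alpha_{4,1/2}^{an}(R)$---that square is the quantitative improvement of Theorem~\ref{t.quenched_cross} over Theorem~\ref{t.AGMT}.
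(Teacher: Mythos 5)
Your proposal is correct in substance and is essentially the route the paper takes: the paper likewise reduces the variance to $\sum_S \E \left[ \Prob^\eta \left[ \Piv_S(\cross(\lambda R,R)) \right]^2 \right]$ and then feeds in~\eqref{e.main2} with $j=4$ (i.e.\ $\widetilde{\alpha}_4 \asymp \alpha_4^{an}$) together with the boundary arm estimates, summing over dyadic layers. The one structural difference is that the paper does not resample individual Poisson points: it uses the martingale decomposition of Proposition~\ref{p.martingale}, which reveals the boxes of a fixed grid one at a time. That device sidesteps exactly the two technical obstacles you flag at the end (conditioning on the number of points, and the fact that moving one point perturbs several cells at once), because what gets estimated is the pivotality of a whole grid box rather than of the neighbourhood affected by one relocated point; your two-independent-colourings expansion of the squared quenched difference is the same expansion the paper uses implicitly when it writes $\E \left[ \Prob^\eta \left[ \Piv_S \right]^2 \right]$. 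Two small corrections to your sketch: (i) Proposition~\ref{p.alpha_4_2} is the lower bound $\alpha_4^{an}(r,R) \geq \epsilon (r/R)^{2-\epsilon}$, not quasi-multiplicativity (which is Proposition~\ref{p.QM}); you in fact need both, the former to compare the half-plane three-arm probability with the four-arm probability. (ii) ``Every layer contributes at most $O(R^2 \alpha_4^{an}(R)^2)$'' is not quite enough, since there are $\asymp \log R$ dyadic layers and you would lose a logarithm; what is actually shown (see~\eqref{e.23arm} and the computation in Section~\ref{s.quant}) is that the layer at distance $d$ from the boundary contributes $O\mathopen{}\left((d/R)^{2\epsilon}\right) R^2 \alpha_4^{an}(R)^2$, via $\alpha_3^{an,+}(d,R) \asymp (d/R)^2 \leq O(1)\,(d/R)^{\epsilon}\, \alpha_4^{an}(d,R)$, so the layers sum geometrically to the claimed bound.
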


We refer to Subsection~\ref{ss.idea} for some intuitions and ideas of proofs.

\begin{rem}
An interesting question is whether or not Theorems~\ref{t.quenched_arm} and~\ref{t.quenched_cross} are optimal: Is Theorem~\ref{t.quenched_arm} (respectively Theorem~\ref{t.quenched_cross}) still true with $r^{2-\epsilon}$ (respectively $R^{2-\epsilon}$) instead of $r^2$ (respectively $R^2$)? It is likely that the general martingale estimate Proposition~\ref{p.martingale} is not optimal at all in the case of crossing (and arm) events.
\end{rem}

\subsection{An application: Reimer's inequality and the annealed percolation function}

In this subsection, we explain how one can use~\eqref{e.main2} in order to obtain \red{some} estimates on \red{the} annealed probabilities of arm events. We first need to define the disjoint occurrence of two events. If $\omega$ is a coloured configuration, we write $\eta(\omega)$ for the underlying non-coloured \red{set of points (i.e.\ $\eta(\omega)$ is such that $\omega \in \{-1,1\}^{\eta(\omega)}$)}. \red{Recall that $\Omega$ is the set of all coloured configurations. If $A,B \subseteq \Omega$ are measurable with respect to the coloured configuration $\omega$ restricted to a bounded domain, we write
\begin{equation}\label{e.disjoint_occ}
A \square B = \left\lbrace \omega \in \Omega  \; : \; \exists I_1,I_2 \text{ finite disjoint subsets of } \eta(\omega), \; \omega^{I_1} \subseteq A \text{ and } \omega^{I_2} \subseteq B \right\rbrace  \, ,
\end{equation}
where, for all $I \subseteq \eta(\omega)$,}
\[
\omega^I = \{ \omega' \in \lbrace -1,1 \rbrace^{\eta(\omega)} \, : \, \forall i \in I, \, \omega'_i=\omega_i \}.
\]
By Reimer's inequality~\cite{reimer2000proof} (which generalizes the BK inequality to non-necessarily monotonic events, see for instance~\cite{grimmett1999percolation,bollobas2006percolation}), we have the following quenched inequality:
\[
\Prob^\eta_p \left[ A \square B \right] \leq \Prob^\eta_p \left[ A \right] \Prob^\eta_p \left[ B \right] \, .
\]
However, the analogous annealed property ``$\Pro_p \left[ A \square B \right] \leq \Pro_p \left[ A \right] \Pro_p \left[ B \right]$'' is not true. Indeed, if $A$ depends only on $\eta$ and satisfies $\Pro \left[ A \right] \in (0,1)$, then $\Pro \left[ A \right] = \Pro \left[ A \square A \right] > \Pro \left[ A \right]^2$ \red{(indeed, one can just choose $I_1=I_2=\emptyset$ since for any set $I \subseteq \eta(\omega)$ and any $\omega' \in \omega^I$, we have $\eta(\omega')=\eta(\omega$))}. Let us note that, if $A$ and $B$ are annealed increasing (which means that they are stable under addition of black points and deletion of white points) and if $p=1/2$, then the annealed property $\Pro_{1/2} \left[ A \square B \right] \leq \Pro_{1/2} \left[ A \right] \Pro_{1/2} \left[ B \right]$ holds. This is the annealed BK inequality, see Lemma~3.4 of~\cite{ahlberg2015quenched} or~\cite{joosten2012random}.

\red{By the above observation, we can expect that the annealed Reimer inequality does not hold for the events that highly depend on the environment $\eta$, even up to a constant. On the contrary, the estimate \eqref{e.main2} - which implies that the arm events depend little on $\eta$ - can be used to prove that the arm events satisfy an annealed Reimer inequality at $p=1/2$ up to a constant. For instance, for every $j \in \N^*$ we have}
\begin{align*}
& \alpha_{2j+1,1/2}^{an}(r,R)\\
& = \E \left[ \Prob^\eta_{1/2} \left[ \arm_1(r,R) \square \arm_{2j}(r,R) \right] \right]\\
& \leq \E \left[ \Prob^\eta_{1/2} \left[ \arm_1(r,R) \right] \Prob^\eta_{1/2} \left[ \arm_{2j}(r,R) \right] \right] \text{ by Reimer's inequality}\\
& \leq \sqrt{\E \left[ \Prob^\eta_{1/2} \left[ \arm_1(r,R) \right]^2 \right] \E \left[ \Prob^\eta_{1/2} \left[ \arm_{2j}(r,R) \right]^2 \right]} \text{ by the Cauchy-Schwarz inequality}\\
& \leq \grandO{1} \alpha_{2j,1/2}^{an}(r,R) \alpha_{1,1/2}^{an}(r,R) \text{ by~\eqref{e.main2} \, .}
\end{align*}
It seems complicated to prove this estimate without relying on~\eqref{e.main2}. Actually, still by relying on~\eqref{e.main2}, we will prove in Section~\ref{s.strict} that $\alpha_{2j+1,1/2}^{an}(r,R) \leq \grandO{1} \left( \frac{r}{R} \right)^{\epsilon} \alpha_{2j,1/2}^{an}(r,R) \alpha_{1,1/2}^{an}(r,R)$. This identity will be a key result in order to prove the following strict inequality for the annealed percolation function, which is analogous to the result obtained by Kesten and Zhang in~\cite{kesten1987strict}:
\begin{thm}\label{t.strict}
There exists a constant $\epsilon > 0$ such that, for every $p > 1/2$, we have
\[
\theta^{an}(p) \geq \epsilon \left( p-1/2 \right)^{1-\epsilon} \, .
\]
\end{thm}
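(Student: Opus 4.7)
The plan is to upgrade the ``strict Reimer'' estimate $\alpha_{3,1/2}^{an}(r,R) \leq O(1)(r/R)^\epsilon \alpha_{2,1/2}^{an}(r,R)\alpha_{1,1/2}^{an}(r,R)$ announced just before Theorem~\ref{t.strict} to its five-arm analogue, and then feed it into the Kesten-type near-critical scaling framework for Voronoi percolation developed in~\cite{scaling_voro}. Running the same chain of inequalities (quenched Reimer + Cauchy--Schwarz + the bound~\eqref{e.main2}) on the topological inclusion $\arm_5 \subset \arm_4 \square \arm_1$ yields
\[
\alpha_{5,1/2}^{an}(R) \leq C\, R^{-\epsilon}\, \alpha_{4,1/2}^{an}(R)\, \alpha_{1,1/2}^{an}(R),
\]
and combining this with the universal polychromatic five-arm lower bound $\alpha_{5,1/2}^{an}(R) \geq c\,R^{-2}$ (obtained via a standard second-moment/pivotal-counting argument using Tassion's RSW, Theorem~\ref{t.tassion}, together with annealed quasi-multiplicativity) gives the key inequality
\[
\alpha_{4,1/2}^{an}(R)\,\alpha_{1,1/2}^{an}(R) \geq c\,R^{-2+\epsilon}.
\]

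Next, I would invoke two ingredients from the Voronoi scaling relations of~\cite{scaling_voro}. Let $L(p)$ denote the correlation length, i.e. the largest $R$ with $\Pro_p[\cross(R,R)] \leq 1-\eta_0$ for some small $\eta_0 > 0$. Integrating a Russo-type formula for $\Pro_p[\cross(R,R)]$ along $[1/2,p]$ yields the scaling identity $(p-1/2)\, L(p)^2\, \alpha_{4,1/2}^{an}(L(p)) \asymp 1$, while an FKG/gluing renormalization at scale $L(p)$ produces $\theta^{an}(p) \geq c\, \alpha_{1,1/2}^{an}(L(p))$. Specializing the inequality above to $R = L(p)$ then gives
\[
\theta^{an}(p) \geq c\, \alpha_{1,1/2}^{an}(L(p)) \geq c\, \frac{L(p)^{-2+\epsilon}}{\alpha_{4,1/2}^{an}(L(p))} \asymp c\,(p-1/2)\,L(p)^{\epsilon}.
\]

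Since $\alpha_{4,1/2}^{an}(L(p)) \leq 1$ trivially, the scaling identity also yields $L(p) \geq c\,(p-1/2)^{-1/2}$ and therefore $L(p)^\epsilon \geq c\,(p-1/2)^{-\epsilon/2}$, so that
\[
\theta^{an}(p) \geq c\,(p-1/2)^{1-\epsilon/2},
\]
which is the desired estimate after renaming $\epsilon/2 \mapsto \epsilon$ and absorbing constants (for $p$ bounded away from $1/2$ the bound follows from monotonicity and positivity of $\theta^{an}$).

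The main obstacle is precisely the $R^{-\epsilon}$ gain in the five-arm inequality: without the quantitative quenched control of Theorem~\ref{t.quenched_arm}, only the trivial Reimer bound $\alpha_5^{an} \leq \alpha_4^{an} \alpha_1^{an}$ is available, which combined with the universal five-arm lower bound gives $\alpha_4^{an}\alpha_1^{an} \geq c R^{-2}$ and hence only $\theta^{an}(p) \geq c(p-1/2)$ via the above chain, which is useless. A secondary delicate point is to check that the universal five-arm lower bound and the two scaling-relation ingredients from~\cite{scaling_voro} hold uniformly in $R$ with explicit annealed constants; the Voronoi cell geometry adds bookkeeping relative to the Bernoulli-lattice analogue of Kesten and Zhang~\cite{kesten1987strict}, but the strategy is structurally the same.
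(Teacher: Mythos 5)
Your overall architecture matches the paper's: the proof does reduce to the key inequality $\alpha_1^{an}(R)\,\alpha_4^{an}(R) \geq c\,R^{-2+\epsilon}$ (Proposition~\ref{p.alpha_4_geq} with $r=1$), which is then combined with the two scaling relations of Theorem~\ref{t.scaling_rel} and the polynomial lower bound on $L^{an}(p)$ exactly as you describe. However, there is a genuine gap in how you claim to obtain that key inequality. You assert that ``quenched Reimer + Cauchy--Schwarz + \eqref{e.main2}'' applied to $\arm_5 \subseteq \arm_4 \square \arm_1$ yields $\alpha_5^{an}(R) \leq C\,R^{-\epsilon}\,\alpha_4^{an}(R)\,\alpha_1^{an}(R)$. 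It does not: that chain gives pointwise $\Prob^\eta[\arm_5] \leq \Prob^\eta[\arm_4]\,\Prob^\eta[\arm_1]$, then $\alpha_5^{an} \leq \widetilde{\alpha}_4\,\widetilde{\alpha}_1 \leq \grandO{1}\,\alpha_4^{an}\,\alpha_1^{an}$ --- the bound you yourself call ``trivial'' and ``useless'' in your last paragraph. Nothing in Reimer, Cauchy--Schwarz, or \eqref{e.main2} produces the extra factor $R^{-\epsilon}$; you have in effect assumed the very gain you identify as the main obstacle.

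The missing idea is Beffara's winding-number argument (Appendix~A of~\cite{garban2010fourier}), which is what the paper actually runs in the proof of Proposition~\ref{p.alpha_4_geq}. One shows, at the \emph{quenched} level and on a good event $\gp(\rho,M)$ on which the point process is dense and quenched box-crossing estimates hold, that a $1$-arm crossing an annulus of aspect ratio $M$ typically winds, so that
\[
\Prob^\eta \left[ \arm_5(\rho,M\rho) \right] \leq M^{-\epsilon}\, \Prob^\eta \left[ \arm_1(\rho,M\rho) \right] \Ex^\eta \left[ Y^3 \un_{Y \geq 4} \right],
\]
with $Y$ the number of interfaces; the factor $M^{-\epsilon}$ is where the gain comes from, and it is a geometric input separate from Reimer (Reimer is used only afterwards, to bound $\Ex^\eta[Y^3 \un_{Y\geq4}]$ by a constant times $\Prob^\eta[\arm_4(\rho,M\rho)]$). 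Only then do Cauchy--Schwarz, \eqref{e.main2}, the universal estimate $\alpha_5^{an} \asymp M^{-2}$, and quasi-multiplicativity enter, in the order you list. A further point your write-up glosses over is why this argument can be run at all in a random environment: the independence and RSW inputs of Beffara's proof are only available quenched on $\gp(\rho,M)$, which forces the ``fix $M$, work at scales $\rho \geq M$, then quasi-multiply'' structure and the error terms $\grandO{1}\rho^{-3}$ that must be absorbed. So: right skeleton, but the central estimate is unproved as stated.
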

Let us note that the authors of~\cite{duminil2017exponential} have obtained that $\theta^{an}(p) \geq \epsilon \, (p-1/2)$ \textbf{in any dimension}. Theorem~\ref{t.strict} is proved in Section~\ref{s.strict}. In order to prove this result, we also rely on the following two annealed scaling relations (analogous to the scaling relations proved by Kesten for Bernoulli percolation on $\Z^2$, see~\cite{kesten1987scaling}) that we have proved in~\cite{scaling_voro}:
\begin{thm}[Theorem~1.11 of~\cite{scaling_voro}]\label{t.scaling_rel}
For every $p \in (1/2,3/4]$, let $L^{an}(p)$ denote the annealed correlation length, i.e.
\[
L^{an}(p) = \inf \{ R \geq 1 \, : \, \Pro_p \left[ \cross(2R,R) \right] \geq 1-\eps_0 \} < +\infty \, ,
\]
for some fixed sufficiently small $\eps_0$ (see Subsection~1.3 of~\cite{scaling_voro} for more details). \red{There exist $c=c(\eps_0)\in (0,+\infty)$ and $C=C(\eps_0) \in (0,+\infty)$ such that, for every $p \in (1/2,3/4]$,}
\[
c \, \alpha^{an}_{1,1/2}(L^{an}(p)) \le \theta^{an}(p) \le C \, \alpha^{an}_{1,1/2}(L^{an}(p))
\]
and
\[
c \, \frac{1}{p-1/2} \le L^{an}(p)^2 \alpha_{4,1/2}^{an}(L(p)) \le C \, \frac{1}{p-1/2} \, .
\]
\end{thm}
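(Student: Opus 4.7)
The plan is to follow Kesten's classical scaling relation argument from~\cite{kesten1987scaling}, adapted to the annealed planar Voronoi setting. The two identities are proved independently but share a common near-critical stability input.

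\emph{One-arm relation.} For the upper bound, write $\theta^{an}(p) \leq \Pro_p[0 \leftrightarrow \partial[-L(p),L(p)]^2]$ and compare this to the critical one-arm probability. By definition of $L^{an}(p)$ combined with monotonicity in $p$, for every $p' \in [1/2,p]$ and every $R \leq L(p)$ one has $\Pro_{p'}[\cross(2R,R)] \in (c,1-c)$ for some $c = c(\epsilon_0) > 0$ (lower bound from Theorem~\ref{t.tassion} combined with monotonicity in $p$, upper bound from the definition of $L(p)$). Standard gluing using this ``near-critical'' box-crossing estimate and the FKG inequality yields, uniformly in such $p'$ and $R$,
\[
\alpha^{an}_{1,p'}(R) \asymp \alpha^{an}_{1,1/2}(R) ,
\]
from which the upper bound follows. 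For the lower bound, use an FKG + renormalization construction: crossings of rectangles of aspect ratio $2{:}1$ at the dyadic scales $2^k L(p)$, $k \geq 0$, occur with $\Pro_p$-probability $\geq 1-\epsilon_0$, so they can be FKG-glued into an infinite black cluster meeting $\partial[-L(p),L(p)]^2$ with positive probability. Combining this with $\Pro_p[0 \leftrightarrow \partial[-L(p),L(p)]^2] \asymp \alpha^{an}_{1,1/2}(L(p))$ (proved above) gives $\theta^{an}(p) \geq c\,\alpha^{an}_{1,1/2}(L(p))$.

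\emph{Pivotal relation.} Apply Russo's formula for Voronoi percolation to the monotonic event $\cross(2R,R)$:
\[
\frac{d}{dp'} \Pro_{p'}[\cross(2R,R)] = \E\!\left[\sum_{x \in \eta \cap [-2R,2R]^2} \Prob^\eta_{p'}\!\left[x \text{ is pivotal for } \cross(2R,R)\right]\right].
\]
For a cell $x$ in a bulk sub-box of $[-2R,2R]^2$, being pivotal forces a $4$-arm event from a neighbourhood of $x$ out to $\partial[-2R,2R]^2$; using annealed quasi-multiplicativity of the $4$-arm probability and the fact that typical Voronoi cells have uniformly bounded geometry, one obtains
\[
\E\!\left[\sum_{x \in \eta \cap [-2R,2R]^2} \Prob^\eta_{p'}\!\left[x \text{ is pivotal}\right]\right] \asymp R^2 \, \alpha^{an}_{4,p'}(R).
\]
Specialising to $R = L^{an}(p)$ and integrating over $p' \in [1/2,p]$ gives
\[
\Pro_p[\cross(2L(p),L(p))] - \Pro_{1/2}[\cross(2L(p),L(p))] \asymp \int_{1/2}^p L(p)^2 \, \alpha^{an}_{4,p'}(L(p)) \, dp'.
\]
The left-hand side is of order $1$ (upper bounded by $1$; bounded below since $\Pro_{1/2}[\cross(2L(p),L(p))]$ is uniformly bounded away from $1$ by Theorem~\ref{t.tassion}, while $\Pro_p[\cross(2L(p),L(p))] \geq 1-\epsilon_0$ by definition of $L(p)$ after one gluing step). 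Near-critical stability then yields $\alpha^{an}_{4,p'}(L(p)) \asymp \alpha^{an}_{4,1/2}(L(p))$ uniformly in $p' \in [1/2,p]$, so the integral is comparable to $(p-1/2)\,L(p)^2\,\alpha^{an}_{4,1/2}(L(p))$, giving the second scaling relation.

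\emph{Main obstacle.} The central difficulty is the near-critical stability of annealed arm events, i.e.\ $\alpha^{an}_{j,p'}(R) \asymp \alpha^{an}_{j,1/2}(R)$ uniformly in $p' \in [1/2,p]$ and $R \leq L(p)$, together with annealed quasi-multiplicativity of arm events; both are needed to pass from pivotal counts and gluing estimates at $p'$ to statements at the critical point. In the Bernoulli case on $\Z^2$ these are obtained via the BK inequality and second-moment pivotal arguments, but for Voronoi percolation the annealed BK inequality fails for general events, so one must work at the quenched level. This is where the quenched concentration of crossing probabilities from~\cite{ahlberg2015quenched} becomes essential: combined with the quenched Reimer inequality $\Prob^\eta_p[A \square B] \leq \Prob^\eta_p[A]\Prob^\eta_p[B]$, it allows one to average out the environment without losing a polynomial factor, and thereby implement Kesten's coupling and pivotal scheme in the annealed Voronoi setting.
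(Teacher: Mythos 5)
First, note that the paper you are working from does not prove this statement at all: it is quoted verbatim as Theorem~1.11 of~\cite{scaling_voro} and used as an external input, so the only fair comparison is with the proof in that reference, which is a full-length adaptation of Kesten's scaling-relations machinery to annealed Voronoi percolation. Your sketch follows the same broad Kesten outline (Russo/Margulis formula, pivotal counts of order $R^2\alpha_4$, integration over $p'$, near-critical stability), which is the right skeleton, but the two places where you wave your hands are exactly the places where all the work lies, and as written they contain genuine gaps. For the one-arm relation, the claim that ``standard gluing using the near-critical box-crossing estimate and the FKG inequality yields $\alpha^{an}_{1,p'}(R)\asymp\alpha^{an}_{1,1/2}(R)$'' is not correct as stated: RSW-type bounds at $p'$ and at $1/2$ give uniform quasi-multiplicativity constants but do not compare the two arm probabilities with each other (a priori they could differ by a polynomial factor); the upper bound $\alpha^{an}_{1,p}(R)\le C\,\alpha^{an}_{1,1/2}(R)$ for $R\le L(p)$ is itself a near-critical stability statement and requires the full pivotal/differential-inequality argument, so the two halves of the theorem are intertwined rather than independent as your structure suggests.

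Second, the steps $\E\bigl[\sum_{x\in\eta}\Prob^\eta_{p'}[x\text{ pivotal}]\bigr]\asymp R^2\alpha^{an}_{4,p'}(R)$ and $\alpha^{an}_{4,p'}(L(p))\asymp\alpha^{an}_{4,1/2}(L(p))$ are asserted, not proved, and in the Voronoi setting they are the hard content: one needs annealed quasi-multiplicativity and separation of arms for (non-monotone, degenerate) arm events, boundary $3$-arm estimates to control pivotals near $\partial B_R$, and control of the random environment (large cells, lack of exact spatial independence) via ``dense'' and quenched box-crossing events --- your appeal to ``uniformly bounded geometry'' of cells is false without such conditioning. Your proposed fix in the last paragraph --- quenched Reimer plus the crossing-probability concentration of~\cite{ahlberg2015quenched} lets one ``average out the environment without losing a polynomial factor'' --- does not work as stated: concentration for crossing events does not transfer to arm or pivotal events, and indeed the concentration of quenched \emph{arm} probabilities is precisely the main theorem of the present paper, which was not available to (and is not used by)~\cite{scaling_voro}; that reference instead works throughout with the quantities $\widetilde{\alpha}_j$ and proves quasi-multiplicativity and pivotal estimates for them directly. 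So your proposal is a correct road map of Kesten's strategy, but the Voronoi-specific obstacles it defers are the theorem, not technicalities around it.
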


\subsection{A motivation: noise sensitivity and exceptional times for Voronoi percolation}

One of the main motivations of the present paper (and in particular of Theorem \ref{t.quenched_arm}) is to provide tools to study the \textbf{annealed spectral sample of Voronoi percolation}, which is a continuous spectral object that we introduce and study in \cite{Vannealed}. \blue{\red{Define a dynamical Voronoi percolation process} by either resampling each colour of $\omega$ at rate $1$ or letting the points of $\eta$ evolve according to (independent) long range Lévy processes. In the first case, the colours evolve in time while in the second case the ``environment'' evolves in time. By studying the annealed spectral sample, we prove in \cite{Vannealed} that in both dynamical processes a.s.\ there exist \textbf{exceptional times}, i.e.\ times with an unbounded black component.}

The present paper can actually be seen as the second of a series of three papers whose first is \cite{scaling_voro} (in which we prove the estimates stated in Subsection \ref{ss.a_and_piv}), whose third is \cite{Vannealed}, and whose final goal is to study the annealed spectral sample of Voronoi percolation.

\blue{Why is the present paper important for the study of annealed spectral objects? In \cite{garban2010fourier}, Garban, Pete and Schramm study the so-called (discrete) spectral sample. To this purpose, they rely deeply on general spectral inequalities that involve \textbf{the square of the probability of pivotal events}. The analogous spectral inequalities for the annealed spectral sample (see \cite{Vannealed}) involve the expectation of the square of the quenched probability of pivotal events. In the case of percolation, the latter quantity is highly connected to the quantity $ \E \left[ \Prob^\eta_{1/2} \left[ \arm_4(r,R) \right]^2 \right]$ that is studied in the present work.}

The study of noise sensitivity of Voronoi percolation \blue{(which is a notion intimately related to the question of existence of exceptional times)} has been initiated in~\cite{ahlberg2015quenched} and~\cite{ahlberg2017noise}.

\subsection{Ideas of proof}\label{ss.idea}

\begin{notation}
In the paper, we will only work at the parameter $p=1/2$ (the scaling relations of Theorem~\ref{t.scaling_rel} enable us to estimate $\theta(p)$ with $p > 1/2$ by working at $p=1/2$). Hence, we will use the following simplified notations:
\bi 
\item $\Pro := \Pro_{1/2}$, $\Prob^\eta := \Prob_{1/2}^\eta$ and $\Prob^E := \Prob_{1/2}^E \, ,$
\item $\alpha_j^{an}(r,R) := \alpha^{an}_{j,1/2}(r,R) \, .$
\ei
Also, we will use the following notation:
\begin{equation}\label{e.notation_alphatilde}
\widetilde{\alpha}_j(r,R) = \sqrt{ \E \left[ \Prob^\eta \left[ \arm_j(r,R) \right]^2 \right]} \, .
\end{equation}
\end{notation}

\blue{Let us present some ideas of proofs. Let us first explain why we see Theorem \ref{t.quenched_cross} as a consequence of \eqref{e.main2} (although we will also need some further work about arm events). (Similarly, we see \eqref{e.main} as a consequence of \eqref{e.main2}.) In \cite{ahlberg2015quenched} (see Theorem 2.1 therein), the authors prove a martingale estimate inspired by the Efron-Stein inequality that implies that the variance of the quenched probability of a crossing event is bounded by $\E [ \sum_{x \in \eta}  (\text{\textup{Inf}}_x^\eta)^2]$, where $\text{\textup{Inf}}_x^\eta$ is the quenched influence, that is, the $\Prob^\eta$-probability that changing the colour of $x$ modifies the indicator function of the crossing event. In both Bernoulli percolation (see \cite{kesten1987scaling,werner2007lectures,nolin2008near}) and Voronoi percolation (see \cite{scaling_voro}), it is known that in some sense, if we neglect boundary issues, the influence is of the same order as the probability of the $4$-arm event. Moreover, by \eqref{e.main2}, \textbf{the quenched probability of the $4$-arm event is typically of the same order as the annealed probability}. As a result, the variance of the quenched probability of a crossing event at scale $R$ is bounded by $CR^2\alpha_4^{an}(R)^2$. This is exactly Theorem \ref{t.quenched_cross}.}
\medskip

\blue{Let us now focus on \eqref{e.main2} which is the central result of our work.} By Jensen's inequality, $\widetilde{\alpha}_j(r,R) \geq \alpha_j^{an}(r,R)$. Our goal is to prove that the other inequality is true up to a constant. In order to explain the general strategy, we need to introduce an annealed and a quenched notions of pivotal events (that we have used in~\cite{scaling_voro}):
\begin{defi}\label{d.piv}
Let $A$ be an event measurable with respect to the coloured configuration $\omega$ and let $\eta$ be the underlying (non-coloured) point configuration. Also, let $D$ be a bounded Borel subset of the plane.
\bi 
\item The subset $D$ is said quenched-pivotal for $\omega$ and $A$ if there exists $\omega' \in \lbrace -1,1 \rbrace^\eta$ such that $\omega$ and $\omega'$ coincide on $\eta \cap D^c$ and $\un_A(\omega') \neq \un_A(\omega)$. We write $\Piv^q_D(A)$ for the event that $D$ is quenched-pivotal for $A$.
\item The subset $D$ is said annealed-pivotal for some Voronoi percolation configuration $\omega$ and some event $A$ if both $\Pro \left[ A \, | \, \omega \setminus D \right]$ and $\Pro \left[ \neg A \, | \, \omega \setminus D \right]$ are positive. We write $\Piv_D(A)$ for the event that $D$ is annealed-pivotal for $A$.
\ei
Note that we have $\Pro \left[ \Piv^q_D(A) \setminus \Piv_D(A) \right]=0$ for any $A$ and $D$ as above.
\end{defi}

\blue{Let us recall that we want to prove that $\widetilde{\alpha}_j(r,R)$ is of the same order as $\alpha_j^{an}(r,R)$. Let us first observe that
\[
\Var \left( \Prob^\eta \left[ \arm_j(r,R) \right] \right)=\widetilde{\alpha}_j(r,R)^2-\alpha_j^{an}(r,R)^2 \, .
\]
As in~\cite{ahlberg2015quenched}, we will use a martingale method inspired by the Efron-Stein inequality in order to bound $\Var \left( \Prob^\eta \left[ \arm_j(r,R) \right] \right)$. The difference with~\cite{ahlberg2015quenched} is that we will prove an estimate that also holds for non-monotonic events. This estimate is Proposition~\ref{p.martingale} and implies that\footnote{\blue{Let us note that it is crucial for us to use an estimate that involves a sum of \textbf{squares} of pivotal probabilities, i.e.\ an $\ell^2$ sum. On the contrary, many variance inequalities, such as for instance the OSSS inequality, involve an $\ell^1$ sum of pivotal probabilities.}}
\[
\Var \left( \Prob^\eta \left[ \arm_j(r,R) \right] \right) \leq \sum_S \E \left[ \Prob^\eta \left[ \Piv_S(\arm_j(r,R)) \right]^2 \right] \, ,
\]
where $S$ ranges (for instance) over all the $1 \times 1$ squares of the grid $\Z^2$.}

We will then need to estimate the quantities $\E \left[ \Prob^\eta \left[ \Piv_S(\arm_j(r,R)) \right]^2 \right]$. To this purpose, we will rely on several estimates from~\cite{scaling_voro}. In Section~\ref{s.asymp}, we will use these estimates in order to prove that
\[
\sum_S \E \left[ \Prob^\eta \left[ \Piv_S(\arm_j(r,R)) \right]^2 \right] \leq O(1) r^{-\epsilon} \, \widetilde{\alpha}_j(r,R)^2
\]
for some $\epsilon >0$. As a result,
\[
\widetilde{\alpha}_j(r,R)^2-\alpha_j^{an}(r,R)^2 \leq \grandO{1} r^{-\epsilon} \, \widetilde{\alpha}_j(r,R)^2
\]
and there exists $r_0>0$ such that, if $r > r_0$, then
\[
\widetilde{\alpha}_j(r,R)^2 \leq 2 \alpha_j^{an}(r,R)^2  \, .
\]
\blue{We will thus obtain the desired result for $r$ sufficiently large and we will conclude that the result holds for every $r$ thanks to the quasi-multiplicativity property of arm-events (see Proposition~\ref{p.QM}).}
\medskip

Let us end this part on the strategy of proofs by the following remark: As mentioned above, in order to prove our main results, we will have to prove estimates on the probabilities of arm events. To this purpose, our strategy will often consist in: i) defining a ``good'' event $G(r,R)$ and then ii) using the trivial bound:
\[
\alpha_j^{an}(r,R) \leq \Pro \left[ \arm_j(r,R) \cond G(r,R) \right] + \Pro \left[ \neg G(r,R) \right] \, .
\]
Of course, we will define $G(r,R)$ in such a way that it is easier to study $\arm_j(r,R)$ under $\Pro \left[ \cdot \, | \, G(r,R) \right]$ than under $\Pro$. The problem here is that we will have estimates of the kind:
\[
\Pro \left[ \neg G(r,R) \right] \leq \varepsilon_1(r)
\]
and
\[
\Pro \left[ \arm_j(r,R) \right] \leq \varepsilon_2(R/r)
\]
for some functions $\varepsilon_1$ and $\varepsilon_2$ that go to $0$ at infinity. So this strategy is not useful at all when $R/r$ is extremely large compared to $r$. To overcome this difficulty, our strategy will often be to fix some $M \gg 1$, prove estimates on quantities of the form $\alpha_j^{an}(\rho,\rho M)$ for any $\rho \geq M$, and then deduce estimates that hold for $\alpha_j^{an}(r,R)$ for any $r \leq R$ by using the quasi-multiplicativity property Proposition \ref{p.QM}. See in particular the proofs of Propositions~\ref{p.alpha_4_geq} and~\ref{p.alpha_4_from_SS}. Note that this strategy is close to the strategy from~\cite{lawler2002onearm} and~\cite{Smirnov2001critical} where the authors compute the arm exponents for critical percolation on the triangular lattice by estimating the probabilities of non-degenerate arm events and then deducing the result for all arm-events by using the quasi-multiplicativity property.

\begin{notation} Let us end this section by some general notations that we will use all along the paper:
\bi 
\item We write $B_R=[-R,R]^2$ and we write $A(r,R)$ for the annulus $[-R,R]^2 \setminus (-r,r)^2$. Also, for every $y \in \R^2$, we write $B_r(y) = y + B_r$ and $A(y;r,R) = y + A(r,R)$.
\item A quad $Q$ is a topological rectangle in the plane with two distinguished opposite sides. Also, a crossing of $Q$ is a black path included in $Q$ that connects the distinguished sides. The event that $Q$ is crossed is written $\cross(Q)$.
\item We use the following notations: \textit{(a)} $\grandO{1}$ is a positive bounded function, \textit{(b)} $\Omega(1)$ is a positive function bounded away from $0$ and \textit{(c)} if $f$ and $g$ are two non-negative functions, then $f \asymp g$ means $\Omega(1) f \leq g \leq \grandO{1} f$.
\ei
\end{notation}

\paragraph{Acknowledgments:} I would like to thank Christophe Garban for many helpful discussions and for his comments on earlier versions of the manuscript. I would also like to thank Vincent Tassion for fruitful discussions and for having welcomed me in Zürich several times. Finally, I wish to thank an anonymous referee for many helpful comments.

\section{The martingale method}\label{s.mart}

In this section, we follow the ideas \red{from} Section~$2$ of~\cite{ahlberg2015quenched} where the authors use a martingale method \red{inspired by the Efron-Stein inequality}. \red{In particular, they prove a general estimate by discovering the colours of the points of the Poisson process one after the other}. In the present paper, we will rather discover the boxes of a grid (i.e.\ at each step we will discover all the points of the Poisson process that belong to \red{some} box). Remember the definition of pivotal events from Definition~\ref{d.piv}.
\begin{prop}\label{p.martingale}
Let $\rho > 0$, \red{let $E \subseteq \Omega$ be a measurable set}, and let $(S_m^\rho)_{m \in \N}$ be an enumeration of the $\rho \times \rho$ squares of the grid $\rho\Z^2$. Then,
\[
\Var \left( \Prob^\eta \left[ E \right] \right) \leq \sum_{m \in \N} \E \left[ \Prob^\eta \left[ \Piv_{S^\rho_m}(E) \right]^2 \right] \, .
\]
\end{prop}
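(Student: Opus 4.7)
My plan is to follow the martingale strategy outlined in the introduction, now applied box-by-box. Writing $\eta_m = \eta \cap S_m^\rho$, the pieces $(\eta_m)_m$ are independent by the Poisson property. Setting $\calF_{-1} = \{\emptyset,\Omega\}$ and $\calF_m = \sigma(\eta_0,\ldots,\eta_m)$, I consider the Doob martingale $M_m = \E[\un_E \mid \calF_m]$, which by the tower property equals $\E[\Prob^\eta[E] \mid \calF_m]$. Since $\calF_\infty = \sigma(\eta)$, we have $M_{-1} = \Pro[E]$ and $M_\infty = \Prob^\eta[E]$, so orthogonality of martingale differences gives
\[
\Var(\Prob^\eta[E]) = \sum_{m \in \N} \E\bigl[(M_m - M_{m-1})^2\bigr].
\]
It therefore suffices to bound each increment by $\E[\Prob^\eta[\Piv_{S_m^\rho}(E)]^2]$.

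The heart of the argument is the following pointwise pivotality estimate. Let $\eta'_m$ be an independent copy of $\eta_m$ equipped with its own independent colouring, and set $\eta^{(m)} = (\eta \setminus \eta_m) \cup \eta'_m$. Decomposing along the colours $c_{\mathrm{out}}$ of the points of $\eta \setminus S_m^\rho$, both $\Prob^\eta[E]$ and $\Prob^{\eta^{(m)}}[E]$ can be written as the $c_{\mathrm{out}}$-average of $\Pro[E \mid \eta, c_{\mathrm{out}}]$ and $\Pro[E \mid \eta^{(m)}, c_{\mathrm{out}}]$ respectively. Whenever $(\eta \setminus S_m^\rho, c_{\mathrm{out}})$ is not in $\Piv_{S_m^\rho}(E)$, we have $\Pro[E \mid \omega \setminus S_m^\rho] \in \{0,1\}$, so the two inner quantities coincide; otherwise they differ by at most $1$. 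Averaging over $c_{\mathrm{out}}$ therefore yields
\[
\bigl|\Prob^\eta[E] - \Prob^{\eta^{(m)}}[E]\bigr| \le \Prob^\eta[\Piv_{S_m^\rho}(E)] \quad \text{a.s.,}
\]
where crucially $\Prob^\eta[\Piv_{S_m^\rho}(E)]$ depends on $\eta$ only through $\eta \setminus S_m^\rho$ (since $\Piv_{S_m^\rho}(E)$ is measurable with respect to $\omega \setminus S_m^\rho$), so the right-hand side is the same whether computed from $\eta$ or $\eta^{(m)}$.

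To conclude, write $M_m^{(m)}$ for the value of $M_m$ after the resampling $\eta_m \rightsquigarrow \eta'_m$. Since $(M_m, M_m^{(m)})$ is conditionally i.i.d.\ given $\calF_{m-1}$, the standard identity $\E[(X-Y)^2]=2\Var(X)$ applied conditionally gives $\E[(M_m - M_{m-1})^2] = \tfrac12 \E[(M_m - M_m^{(m)})^2]$. Writing $M_m - M_m^{(m)}$ as a conditional expectation of $\Prob^\eta[E] - \Prob^{\eta^{(m)}}[E]$ over $(\eta_i)_{i>m}$ and then applying Jensen's inequality together with the pointwise bound above yields
\[
\E\bigl[(M_m - M_m^{(m)})^2\bigr] \le \E\bigl[\Prob^\eta[\Piv_{S_m^\rho}(E)]^2\bigr],
\]
and summing over $m$ delivers the proposition (with a spare factor $1/2$). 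The one delicate point I expect is the pointwise bound: one must juggle three independent sources of randomness --- the resampling of $\eta_m$, the colours inside $S_m^\rho$, and the colours outside --- and it is precisely the $\omega \setminus S_m^\rho$-measurability of $\Piv_{S_m^\rho}(E)$ that upgrades a naive first-moment estimate into the squared-quenched-pivotality bound on the right-hand side of the proposition.
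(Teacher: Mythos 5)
Your argument is correct, and its skeleton is the same as the paper's: a Doob martingale obtained by revealing $\eta$ box by box, orthogonality of the increments, and a bound on each increment by $\E \left[ \Prob^\eta \left[ \Piv_{S_m^\rho}(E) \right]^2 \right]$. The one place where you genuinely diverge is in how the increment is controlled. The paper compares $q^\eta = \Prob^\eta[E]$ with $q^{\eta^-}$, where $\eta^-$ is $\eta$ with the points of $S_m^\rho$ \emph{deleted}; it derives $\Var(q_m - q_{m-1}) \leq \E[(q^\eta - q^{\eta^-})^2]$ from the conditional variance formula (using that $q^{\eta^-}$ is independent of $\eta \cap S_m^\rho$), and bounds $|q^\eta - q^{\eta^-}|$ via the inclusion $\{h^\eta \neq h^{\eta^-}\} \subseteq \Piv_{S_m^\rho}(E)$. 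You instead \emph{resample} the box and use the two-point Efron--Stein identity $\E[(M_m - M_{m-1})^2] = \tfrac12 \E[(M_m - M_m^{(m)})^2]$, proving the pointwise bound $|\Prob^\eta[E] - \Prob^{\eta^{(m)}}[E]| \leq \Prob^\eta[\Piv_{S_m^\rho}(E)]$ directly from the definition of annealed pivotality by conditioning on the outside colours. Both routes are sound; yours trades the paper's manipulation of $\E[q^{\eta^-} \mid \cdot\,]$ for the exchangeability of $(M_m, M_m^{(m)})$, picks up a spare factor $\tfrac12$, and makes explicit why the $\omega \setminus S_m^\rho$-measurability of the annealed pivotal event yields a \emph{squared} quenched pivotal probability rather than a first-moment bound. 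The only caveat is the a.s.\ qualifier in your pointwise estimate: off $\Piv_{S_m^\rho}(E)$ the conditional probability $\Pro[E \mid \omega \setminus S_m^\rho] \in \{0,1\}$ only forces the inner quantities to agree for almost every realization of $\eta_m$ and $\eta'_m$, but since these carry the correct law and are independent of the exceptional set this is harmless --- the paper needs the same a.s.\ qualifier for its inclusion into $\Piv_{S_m^\rho}(E)$.
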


\begin{proof}
The proof is very similar to the proof of Theorem~$2.1$ in~\cite{ahlberg2015quenched}. We use the following notations:
\[
q^\eta = \Prob^\eta \left[ E \right] \, ;
\]
\[
\forall m \in \N \cup \lbrace -1 \rbrace, \hspace{1em} q_m = \Pro \left[ E \cond \eta \cap (\cup_{k=0}^m S_k^\rho) \right] = \E \left[ q^\eta \cond \eta \cap \left( \cup_{k=0}^m S_k^\rho \right) \right] \, .
\]
Note that $(q_m)_m$ is a bounded martingale that converges in $L^2$ to $q^\eta$. Note also that $q_{-1} = \E \left[ q^\eta \right]$. Hence we have:
\[
\Var \left( q^\eta \right) = \underset{M \rightarrow +\infty}{\text{lim}} \Var \left( \sum_{m = 0}^M q_m - q_{m-1} \right) = \sum_{m \in \N} \Var \left( q_m - q_{m-1} \right) \, .
\]
It is thus sufficient to prove that for all $m \in \N$ we have
\begin{equation}\label{e.varleqpiv}
\Var \left( q_m - q_{m-1} \right) \leq \E \left[ \Prob^\eta \left[ \Piv_{S_m^\rho}(E) \right]^2 \right].
\end{equation}
To this purpose, let \red{$\eta^- = \eta \setminus S_m^\rho$}, let $q^{\eta^-}=\Prob^{\eta^-}[E]$ \blue{(more rigorously, $q^{\eta^-}=\Prob^{\eta^-}[\{-1,1\}^{\eta^-} \cap E]$),\footnote{\red{Note that $\Prob^{\eta^-}[E]$ is different from $\Pro [ E |  \eta^-]$ in general. Under $\Prob^{\eta^-}$, there is no point in $S_m^\rho$.}}} and let us prove the following:
\begin{equation}\label{e.varleqexp}
\Var \left( q_m - q_{m-1} \right) \leq \E \left[ (q^\eta - q^{\eta^-})^2 \right].
\end{equation}
\begin{proof}[Proof of~\eqref{e.varleqexp}]
We follow the proof of Lemma~2.4 in~\cite{ahlberg2015quenched} where the authors use the conditional variance formula.\footnote{The conditional variance is $\Var(X \, | \, Y) = \E \left[ X^2 \, | \, Y \right] - \E \left[ X \, | \, Y \right]^2 = \E \left[ (X-\E \left[ X \, | \, Y \right])^2 \, | \, Y \right]$. The conditional variance formula is $\Var(X)=\Var(\E \left[X \, | \, Y \right]) + \E \left[ \Var(X \, | \, Y) \right]$.} Since $\E \left[ q_m - q_{m-1} \cond \eta \cap (\cup_{k=0}^{m-1} S^\rho_k) \right] = 0$, this formula implies that:
\[
\Var \left( q_m - q_{m-1} \right) = \E \left[ \Var \left( q_m \cond \eta \cap (\cup_{k=0}^{m-1} S^\rho_k) \right) \right] \, .
\]
By using the fact that $(q^{\eta^-},\cup_{k=0}^{m-1} \eta \cap S^\rho_k)$ is independent of $\eta \cap S^\rho_m$, we obtain that 
\[\E \left[ q^{\eta^-} \cond  \eta \cap (\cup_{k=0}^{m-1} S^\rho_k) \right] = \E \left[ q^{\eta^-} \cond  \eta \cap (\cup_{k=0}^{m} S^\rho_k) \right] \, ,
\]
hence $\Var \left( q_m \cond \eta \cap (\cup_{k=0}^{m-1} S^\rho_k) \right)$ equals
\begin{align*}
& \Var \left( \E \left[ q^\eta \, | \, \eta \cap (\cup_{k=0}^m S^\rho_k) \right] \cond \eta \cap (\cup_{k=0}^{m-1} S^\rho_k) \right)\\
& = \Var \left( \E \left[ q^\eta \, | \, \eta \cap (\cup_{k=0}^m S^\rho_k) \right] - \E \left[ q^{\eta^-} \, | \, \eta \cap (\cup_{k=0}^{m-1} S^\rho_k) \right] \cond \eta \cap (\cup_{k=0}^{m-1} S^\rho_k) \right)\\
& = \Var \left( \E \left[ q^\eta - q^{\eta^-} \, | \, \eta \cap (\cup_{k=0}^m S^\rho_k) \right] \cond \eta \cap (\cup_{k=0}^{m-1} S^\rho_k) \right)\\
& \leq \E \left[ \E \left[ q^\eta - q^{\eta^-} \, | \, \eta \cap (\cup_{k=0}^{\red{m}} S^\rho_k) \right]^2 \cond \eta \cap (\cup_{k=0}^{m-1} S^\rho_k) \right] \text{ since } \Var ( \cdot ) \leq \E \left[ \cdot^2 \right] \\
& \leq \E \left[ (q^\eta - q^{\eta^-})^2 \cond \eta \cap (\cup_{k=0}^{m-1} S^\rho_k) \right] \text{ by Jensen's inequality} \, .
\end{align*}
This ends the proof.
\end{proof}
\blue{As a result,
\[
\Var(q^\eta) \leq \sum_{m \in \N} \E \left[ (q^\eta - q^{\eta^-})^2 \right]
\]
so it is now sufficient to prove that a.s.\ we have $|q^\eta - q^{\eta^-}| \leq \Prob^\eta \left[ \Piv_{S_m^\rho}(E) \right]$. \red{To show this, we let $\omega^- = \omega \setminus S^\rho_m$, and we observe that $\Pro$-a.s.\ we have
\[
|\un_{\omega^- \in E} - \un_{\omega \in E}| \leq \un_{\omega \in \Piv_{S_m^\rho}(E)}.
\]
If we condition on $\eta$, then the law of $\omega^-$ is $\Prob^{\eta^-}$. As a result, $\Pro$-a.s.\ we have
\[
\left|q^\eta - q^{\eta^-} \right| = \left| \Prob^\eta \left[ E \right] - \Prob^{\eta^-} \left[ E \right] \right| \leq \E \left[ \left|\un_{\omega^- \in E} - \un_{\omega \in E} \right| \cond \eta \right] \le \Prob^\eta \left[ \Piv_{S_m^\rho}(E) \right].
\]
This ends the proof.}}
\end{proof}

\section{First estimates on arm and pivotal events}\label{s.estimates}

\subsection{Arm-events and pivotal events}\label{ss.a_and_piv}

As one can see in Proposition~\ref{p.martingale}, one way to estimate $\Var \left( \Prob^\eta \left[ \arm_j(r,R) \right] \right)$ is to find upper bounds for the quantities $\E \left[ \Prob^\eta \left[ \Piv_S(\arm_j(r,R)) \right]^2 \right]$. In~\cite{scaling_voro} (see in particular Appendix~D therein), we have proved such upper bounds in terms of the quantities $\widetilde{\alpha}_4(\cdot,\cdot)$ and $\widetilde{\alpha}_j(\cdot,\cdot)$.

Below, we list the results from~\cite{scaling_voro} about the quantities $\widetilde{\alpha}_j(r,R)$ and $\alpha^{an}_j(r,R)$ that we use in the present paper.
Let us first state a polynomial decay property (see~(1.1) and~(D.3) in~\cite{scaling_voro}): For every $j \in \N^*$, there exists $C=C(j) \in [1,+\infty)$ such that, for every $1 \leq r \leq R <+\infty$,
\begin{equation}\label{e.poly}
\frac{1}{C} \left( \frac{r}{R} \right)^{C} \leq \alpha^{an}_j(r,R) \leq \widetilde{\alpha}_j(r,R) \leq C \left( \frac{r}{R} \right)^{1/C} \, .
\end{equation}
The following quasi-multiplicativity property will be crucial in the present work.
\begin{prop}[Propositions~1.6 and~D.1 of~\cite{scaling_voro}]\label{p.QM}
Let $j \in \N^*$. There exists a constant $C=C(j) \in [1,+\infty)$ such that, for every $1 \leq r_1 \leq r_2 \leq r_3$,
\[
\frac{1}{C} \, \alpha^{an}_j(r_1,r_3) \leq \alpha^{an}_j(r_1,r_2) \, \alpha^{an}_j(r_2,r_3) \leq C \, \alpha^{an}_j(r_1,r_3) \, .
\]
and
\[
\frac{1}{C} \, \widetilde{\alpha}_j(r_1,r_3) \leq \widetilde{\alpha}_j(r_1,r_2) \, \widetilde{\alpha}_j(r_2,r_3) \leq C \, \widetilde{\alpha}_j(r_1,r_3) \, .
\]
\end{prop}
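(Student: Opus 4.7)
The statement packages two inequalities, which I'll call the \emph{factorization bound} $\alpha_j^{an}(r_1,r_3) \le C\,\alpha_j^{an}(r_1,r_2)\,\alpha_j^{an}(r_2,r_3)$ and the \emph{gluing bound} $\alpha_j^{an}(r_1,r_2)\,\alpha_j^{an}(r_2,r_3) \le C\,\alpha_j^{an}(r_1,r_3)$, together with their quenched analogues for $\widetilde{\alpha}_j$. The plan is to follow Kesten's classical strategy for critical Bernoulli percolation on $\Z^2$ --- separation of arms plus an RSW-based gluing --- with two Voronoi-specific adjustments: Poisson cells may straddle the circle $\partial B_{r_2}$, and arm events are non-monotonic as soon as $j \ge 2$.

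\textbf{Factorization.} I would introduce a thin buffer annulus $A(r_2(1-\delta),r_2(1+\delta))$ and work on the event $\mathcal{G}$ that no Voronoi cell meeting this buffer has diameter larger than $\delta r_2/10$. A standard Poisson calculation shows that $\Pro[\mathcal{G}^c]$ decays faster than any polynomial in $r_2$ once $\delta$ is fixed. On $\mathcal{G}$ the restrictions of $\omega$ to the inner annulus $A(r_1,r_2(1-\delta))$ and to the outer annulus $A(r_2(1+\delta),r_3)$ are measurable with respect to disjoint pieces of $\eta$ (and their independent colorings), hence conditionally independent. The inclusion $\arm_j(r_1,r_3) \cap \mathcal{G} \subseteq \arm_j(r_1,r_2(1-\delta)) \cap \arm_j(r_2(1+\delta),r_3) \cap \mathcal{G}$ then gives
\[
\alpha_j^{an}(r_1,r_3) \le \alpha_j^{an}(r_1,r_2(1-\delta))\,\alpha_j^{an}(r_2(1+\delta),r_3) + \Pro[\mathcal{G}^c].
\]
Absorbing the $r_2(1\pm\delta)$ into $r_2$ via the polynomial lower bound~\eqref{e.poly} and discarding the negligible error closes this direction.

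\textbf{Gluing.} The key ingredient is a \emph{separation of arms} lemma: conditionally on $\arm_j(r_1,r_2)$, with probability at least some constant $c_j>0$, the $j$ arms exit $\partial B_{r_2}$ through prescribed disjoint arcs of length $\Omega(r_2)$; and symmetrically, conditionally on $\arm_j(r_2,r_3)$, the arms enter $\partial B_{r_2}$ through prescribed disjoint arcs. Granting this, I would fix admissible angular sectors at scale $r_2$ and use the RSW box-crossing property (Theorem~\ref{t.tassion}) together with FKG inside an $O(r_2)$-thick middle annulus to build black and white ``connectors'' that splice the inner arms to the outer arms while preserving the alternating color pattern. The inner piece, the outer piece, and the connector piece live on disjoint portions of $\eta$ (modulo a buffer handled as above), so their conditional probabilities multiply, yielding a constant times $\alpha_j^{an}(r_1,r_2)\,\alpha_j^{an}(r_2,r_3)$.

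\textbf{Quenched version and main obstacle.} For $\widetilde{\alpha}_j$ the same scheme is run with $\Prob^\eta[\,\cdot\,]^2$ in place of $\Pro[\,\cdot\,]$. The buffer and connector constructions depend only on $\eta$ inside a bounded middle region; this region is independent of the inner and outer $\eta$-configurations, so after taking expectation of squares it contributes only an $O(1)$ multiplicative factor, and the same two inequalities go through. The real obstacle, and what I expect to take the bulk of the work, is the separation of arms lemma for non-monotonic arm events in the quenched Voronoi setting: the textbook proof conditions on outermost/innermost arms and exploits monotonicity, which is unavailable for $j \ge 2$. The replacement should be a multi-scale exploration controlling the joint distribution of the arms' endpoints both under $\Pro$ and under $\E[\Prob^\eta[\,\cdot\,]^2]$, presumably built from the pivotal-event estimates of~\cite{scaling_voro} that are summarized in its Appendix~D.
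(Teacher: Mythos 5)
First, a point of comparison: the paper does not actually prove Proposition~\ref{p.QM} --- it is imported verbatim from Propositions~1.6 and~D.1 of~\cite{scaling_voro} --- so there is no in-paper proof to measure your argument against. The closest relative inside the paper is the proof of the quenched quasi-multiplicativity (Proposition~\ref{p.quenched_QM}), which conditions on the good events $\dense_{\delta_0}$ and $\widetilde{\qbc}^\gamma_{\delta_0}$ on all annuli above a base scale and then runs the separation-of-arms argument of Appendix~A of~\cite{schramm2010quantitative} verbatim at the quenched level. Your overall plan (buffer annulus for independence, separation of arms plus RSW gluing) is the standard one and is consistent with that machinery, but as written it has two genuine gaps.

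The first gap is the treatment of the bad-environment error. In the factorization step you bound $\alpha_j^{an}(r_1,r_3)$ by the product plus $\Pro\left[\mathcal{G}^c\right]$ and then ``discard the negligible error''. But $\Pro\left[\mathcal{G}^c\right]$ decays only in $r_2$ (roughly like $e^{-c(\delta r_2)^2}$), while the product $\alpha_j^{an}(r_1,r_2)\,\alpha_j^{an}(r_2,r_3)$ is only bounded below by $(r_2/r_3)^{C}$ via~\eqref{e.poly}; once $r_3$ is super-exponentially large in $r_2$ the ``error'' dominates the main term and the derived inequality is vacuous. This is exactly the difficulty the paper flags at the end of Subsection~\ref{ss.idea}. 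The repair is to use that $\mathcal{G}^c$ is (essentially) measurable with respect to $\eta$ near scale $r_2$ and hence independent of the arm events in $A(r_1,r_2/10)$ and $A(10r_2,r_3)$, so that it can be factored out multiplicatively, e.g.\ $\Pro\left[\arm_j(r_1,r_3)\cap\mathcal{G}^c\right]\leq \Pro\left[\mathcal{G}^c\right]\alpha_j^{an}(r_1,r_2/10)\,\alpha_j^{an}(10r_2,r_3)$, rather than bounded additively; the same localization is needed in the gluing step and in the quenched-squared version, and this back-and-forth is precisely what the paper describes as the ``rather technical'' part of the proof in~\cite{scaling_voro}. The second gap is that the separation-of-arms lemma --- the actual content of the gluing direction --- is deferred entirely. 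Your diagnosis of why it is hard is also slightly off: the classical proofs (Kesten; Appendix~A of~\cite{schramm2010quantitative}) already handle alternating, non-monotonic arm events via interface arguments and do not condition on outermost arms; what genuinely has to be redone for Voronoi percolation is securing the spatial independence and box-crossing inputs in the random environment, which is the role of the $\dense$ and $\qbc$ events in Subsection~\ref{ss.good}.
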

We have the following estimates on $4$-arm events:
\begin{prop}[Corollary D.11 of~\cite{scaling_voro}]\label{p.alpha_4}
There exists $\epsilon > 0$ such that, for every $R \in [1,+\infty)$,
\[
\alpha^{an}_4(R) \leq \widetilde{\alpha}_4(R) \leq \frac{1}{\epsilon} R^{-(1+\epsilon)} \, .
\]
\end{prop}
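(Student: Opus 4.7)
The target bound $\widetilde{\alpha}_4(R) \leq \frac{1}{\epsilon}R^{-(1+\epsilon)}$ asserts that the quenched-squared $4$-arm exponent is strictly larger than $1$. This is the Voronoi/quenched analogue of Kesten's classical inequality for critical Bernoulli percolation on $\mathbb{Z}^2$, and my plan is to follow Kesten's three-step strategy: factor the improvement over the trivial $R^{-1}$ scaling through the universal $5$-arm upper bound and a strict $1$-arm lower bound, then divide.

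First I would prove the universal $5$-arm estimate $\widetilde{\alpha}_5(r,R) \leq C(r/R)^2$ by a counting/second moment argument. Tile $B_R$ by $\Theta((R/r)^2)$ disjoint boxes of side $\asymp r$; on the event that a macroscopic $5$-arm event occurs around some point, the corresponding small box is forced to support five alternating-colour arms in five rigidly ordered angular sectors, and this rigidity forbids two macroscopically disjoint $5$-arm configurations around distinct centres. Summing the indicators over tiles and taking expectation therefore gives $(R/r)^2 \cdot \widetilde{\alpha}_5(r,R) = O(1)$. The argument extends to the squared-quenched quantity because the combinatorial obstruction depends only on the Voronoi tessellation, not on the colouring. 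Next I would establish the gluing inequality
\[
\widetilde{\alpha}_5(r,R) \geq c \, \widetilde{\alpha}_4(r,R) \, \widetilde{\alpha}_1(r,R)
\]
via separation of arms: after conditioning on the four alternating arms of $\arm_4(r,R)$ being macroscopically separated into four disjoint angular sectors (a positive-probability event by RSW and FKG, i.e.~Theorem~\ref{t.tassion} together with the quasi-multiplicativity of Proposition~\ref{p.QM}), one appends an independent RSW black crossing in the remaining sector to manufacture a $5$-arm configuration. Finally, the strict $1$-arm lower bound $\widetilde{\alpha}_1(r,R) \geq c (r/R)^{1-\epsilon_0}$ for some $\epsilon_0 > 0$ follows by iterated annular RSW chaining together with the universal $2$-arm half-plane bound. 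Combining these three estimates yields
\[
\widetilde{\alpha}_4(r,R) \leq \frac{C\, \widetilde{\alpha}_5(r,R)}{\widetilde{\alpha}_1(r,R)} \leq C(r/R)^{2-(1-\epsilon_0)} = C(r/R)^{1+\epsilon_0},
\]
and setting $r=1$ gives the proposition.

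The main obstacle is the separation-of-arms/gluing inequality in the quenched-squared framework, where one effectively works with two independent colourings of the same Poisson tessellation. The standard FKG, RSW and BK machinery must be deployed in this product space; Voronoi cells that straddle the dividing sectors (and whose two colour samples can disagree) can spoil the separation argument. The remedy is to restrict to a ``good environment'' event on which no cell is too large, and to check that its complement contributes negligibly on the relevant scale. This is precisely the content of Appendix~D of \cite{scaling_voro}, which is why the present paper quotes the bound rather than re-deriving it.
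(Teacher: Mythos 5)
Your reduction hinges on the ``gluing inequality'' $\widetilde{\alpha}_5(r,R) \geq c\, \widetilde{\alpha}_4(r,R)\, \widetilde{\alpha}_1(r,R)$, and this inequality is false -- in fact it points in the wrong direction. What separation of arms and quasi-multiplicativity give you is the ability to \emph{extend} a fixed number of arms radially across scales (Proposition~\ref{p.QM}); they do not let you \emph{add} a new arm at cost $\alpha_1(r,R)$. The fifth arm must be produced inside a prescribed angular sector left free by the four separated arms, and the probability of a radial black crossing of a fixed sector of $A(r,R)$ is not comparable to $\alpha_1(r,R)$ (it decays with a strictly larger exponent). The correct relation between these events is the opposite one: $\arm_5 \subseteq \arm_4 \square \arm_1$, so quenched Reimer gives $\Prob^\eta[\arm_5] \leq \Prob^\eta[\arm_4]\,\Prob^\eta[\arm_1]$, i.e.\ $\alpha_5 \lesssim \alpha_4\,\alpha_1$. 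Concretely, your conclusion $\alpha^{an}_4(R) \leq C\,\alpha^{an}_5(R)/\alpha^{an}_1(R)$ contradicts results stated in this very paper: Proposition~\ref{p.alpha_4_geq} gives $\alpha^{an}_4(R) \geq \epsilon\, R^{-(2-\epsilon)}/\alpha^{an}_1(R)$, while $\alpha^{an}_5(R) \asymp R^{-2}$ by Proposition~\ref{p.universal}, so your bound would force $R^{\epsilon} \leq C/\epsilon$ for all $R$. (On the triangular lattice the same mismatch is visible numerically: $\alpha_4 \approx R^{-5/4}$ whereas $\alpha_5/\alpha_1 \approx R^{-2+5/48}$.) The other two ingredients of your sketch (the universal $5$-arm upper bound and a $1$-arm lower bound with exponent $<1$) are fine in spirit, but they cannot be assembled this way.

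For the record, the present paper does not reprove this proposition; it imports it from Corollary~D.11 of~\cite{scaling_voro}. The mechanism that actually produces the exponent $1+\epsilon$ is the one carried out in Proposition~\ref{p.alpha_4_from_SS}: following Garban's argument from Appendix~B of~\cite{schramm2011scaling}, one compares $\sum_j \Prob^\eta[\arm_4(x_j;\rho,\rho M)]$ with $\bigl(\sum_j \Prob^\eta[\arm_2(x_j;3\rho,\rho M/3)]\bigr)^{1/2}$ via the exploration interface and the orthogonal circuit variables $C_\delta(j)$, yielding $\alpha^{an}_4(r,R) \leq C (r/R)^{1-\epsilon}\sqrt{\alpha^{an}_2(r,R)}$; the strict gain then comes from the polynomial decay of the \emph{two}-arm probability, not from any relation between the one- and five-arm events. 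If you want to prove the proposition rather than cite it, that is the route to take.
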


We will prove a multiscale version of Proposition~\ref{p.alpha_4} in Section~\ref{s.other}.

\begin{prop}[Proposition~1.13 of~\cite{scaling_voro}]\label{p.alpha_4_2}
There exists $\epsilon > 0$ such that, for every $1 \leq r \leq R < +\infty$,
\[
\widetilde{\alpha}_4(r,R) \geq \alpha^{an}_4(r,R) \geq \epsilon \left( \frac{r}{R} \right)^{2-\epsilon} \, .
\]
\end{prop}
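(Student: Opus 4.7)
The inequality $\widetilde\alpha_4(r,R) \geq \alpha^{an}_4(r,R)$ is immediate from Jensen's inequality applied to $x \mapsto x^2$ and the random variable $\Prob^\eta[\arm_4(r,R)]$, since $\alpha^{an}_4(r,R) = \E\left[\Prob^\eta[\arm_4(r,R)]\right]$.

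For the main lower bound $\alpha^{an}_4(r,R) \geq \epsilon(r/R)^{2-\epsilon}$, the plan is to combine three ingredients already cited in the paper: the scaling relations of Theorem~\ref{t.scaling_rel}, which state that at the annealed correlation length $L^{an}(p)$ one has $\theta^{an}(p) \asymp \alpha^{an}_1(L^{an}(p))$ and $L^{an}(p)^2 \, \alpha^{an}_4(L^{an}(p)) \asymp 1/(p-1/2)$; the linear sharpness bound $\theta^{an}(p) \geq \epsilon (p-1/2)$ proved in any dimension by~\cite{duminil2017exponential}; and the polynomial decay $\alpha^{an}_1(R) \leq C R^{-c}$ from~\eqref{e.poly}. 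Combining the first two gives $p-1/2 \leq C\, \alpha^{an}_1(L^{an}(p))$, and substituting this into the second scaling relation yields
\[
\alpha^{an}_4(L^{an}(p)) \;\geq\; \frac{c}{L^{an}(p)^2 \, \alpha^{an}_1(L^{an}(p))} \;\geq\; c \, L^{an}(p)^{-2+c},
\]
where the last step uses the polynomial decay of $\alpha^{an}_1$. This already gives the desired bound at every scale of the form $R = L^{an}(p)$.

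Next, I would extend this to all scales $R \geq 1$ via quasi-multiplicativity (Proposition~\ref{p.QM}): since $L^{an}$ is monotone non-increasing in $p$ and diverges as $p \to 1/2^+$, continuity of the crossing probabilities in $p$ ensures that every sufficiently large $R$ is within a bounded multiplicative factor of some $L^{an}(p)$; quasi-multiplicativity then transfers the bound $\alpha^{an}_4(L^{an}(p)) \geq c \, L^{an}(p)^{-2+\epsilon}$ to $\alpha^{an}_4(1, R) \geq c R^{-2+\epsilon}$ for all $R \geq 1$ (the case of bounded $R$ being trivial since $\alpha^{an}_4(1,R)$ is then bounded below by a positive constant).

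The main obstacle, in my view, is the final step: passing from $\alpha^{an}_4(1, R)$ to $\alpha^{an}_4(r, R)$ for general $1 \leq r \leq R$ with the full exponent $2-\epsilon$. A naive combination of quasi-multiplicativity with the upper bound $\alpha^{an}_4(1, r) \leq C r^{-1-\epsilon}$ from Proposition~\ref{p.alpha_4} yields only a weaker exponent in $r/R$, since the gap between this upper bound and the just-obtained lower bound is too large. To overcome this, one should re-run the Kesten-type scaling-relation argument starting at the intermediate scale $r$ rather than at scale $1$, which amounts to invoking a mesoscopic version of the scaling relations reflecting the scale-invariant nature of near-critical Voronoi percolation. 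Combined once more with quasi-multiplicativity, this should yield the claimed estimate $\alpha^{an}_4(r,R) \geq \epsilon (r/R)^{2-\epsilon}$ uniformly in $1 \leq r \leq R$.
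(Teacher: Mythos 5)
First, note that the paper does not prove this proposition at all: it is imported verbatim as Proposition~1.13 of~\cite{scaling_voro}, so there is no in-paper argument to compare yours against; your attempt must stand on its own. The Jensen step is correct. For the main lower bound, however, your route has a genuine gap, which you yourself flag. Quasi-multiplicativity transfers a bound at scales $(1,L^{an}(p))$ to $(1,R)$, but passing to a general annulus via $\alpha_4^{an}(r,R)\geq c\,\alpha_4^{an}(1,R)/\alpha_4^{an}(1,r)$ together with $\alpha_4^{an}(1,r)\leq Cr^{-1-\epsilon}$ only gives $\alpha_4^{an}(r,R)\geq c\,r^{1+\epsilon}R^{-2+\epsilon'}$, which is strictly weaker than $(r/R)^{2-\epsilon}$ whenever $r$ is a power of $R$ (take $r=\sqrt R$). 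Your proposed fix --- a ``mesoscopic version of the scaling relations'' --- is not among the available tools and is itself a substantial theorem (it would require a near-critical theory relative to an arbitrary base scale $r$); invoking it is not a proof. There is also a circularity risk: Theorem~\ref{t.scaling_rel} is Theorem~1.11 of the very same companion paper, and in Kesten-type derivations of $L(p)^2\alpha_4(L(p))\asymp (p-1/2)^{-1}$ the a priori bound $\alpha_4(r,R)\geq c(r/R)^{2-\epsilon}$ is normally an \emph{input} (it is what gives polynomial control of $L(p)$ and feeds the stability estimates), so deducing it from the scaling relations is suspect unless you check the logical order in~\cite{scaling_voro}.

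The estimate is in fact a purely critical statement with a direct proof that works uniformly over annuli, in the spirit of what the paper itself does in Section~\ref{s.strict} for the stronger Proposition~\ref{p.alpha_4_geq}: fix $M$ large and $\rho\geq M$; on a good event of the type $\gp(\rho,M)$ (density plus quenched box-crossings, of probability $1-\grandO{1}\rho^{-\gamma}$) the quenched measure is a product measure for which Reimer's inequality and circuit arguments are valid, so $\Prob^\eta\left[\arm_5(\rho,M\rho)\right]\leq\Prob^\eta\left[\arm_4(\rho,M\rho)\right]\Prob^\eta\left[\arm_1(\rho,M\rho)\right]\leq CM^{-a}\,\Prob^\eta\left[\arm_4(\rho,M\rho)\right]$ for some $a>0$ coming from the quenched one-arm decay. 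Taking expectations and using the universal annealed five-arm exponent $\alpha_5^{an}(\rho,M\rho)\asymp M^{-2}$ (Proposition~\ref{p.universal}, valid for all annuli) yields $\alpha_4^{an}(\rho,M\rho)\geq cM^{-2+a}$ once $\gamma$ is chosen so that the error $\grandO{1}\rho^{-\gamma}$ is negligible against $M^{-2}$; quasi-multiplicativity then gives the claim for all $1\leq r\leq R$. This avoids both the scaling relations and the problematic passage from $(1,R)$ to $(r,R)$.
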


We will improve Proposition~\ref{p.alpha_4_2} in Section~\ref{s.strict}.
\medskip

Let us write $\arm_j^+(r,R)$ for the $j$-arm event in the half-plane, whose definition is the same as the definition of $\arm_j(r,R)$ except that we ask that the arms live in the (upper, say) half-plane. We also write $\alpha_j^{an,+}(r,R)=\Pro \left[ \arm_j^+(r,R) \right]$ and $\widetilde{\alpha}_j^+(r,R) = \sqrt{ \E \left[ \Prob^\eta \left[ \arm_j^+(r,R) \right]^2 \right]}$. We have the following:
\begin{prop}[Proposition~2.7 of~\cite{scaling_voro}]\label{p.universal} The computation of the universal arm exponents holds for annealed Voronoi percolation: Let $1 \leq r \leq R$. Then,
\bi
\item[\textup{i)}] $\alpha^{an,+}_2(r,R) \asymp r/R \,$, hence $\Omega(1) (r/R) \leq \widetilde{\alpha}^+_2(r,R) \leq \grandO{1} (r/R)^{1/2}$ by Jensen's inequality,
\item[\textup{ii)}] $\alpha^{an,+}_3(r,R) \asymp \left( r/R \right)^2 \,$, hence $\Omega(1) (r/R)^2 \leq \widetilde{\alpha}^+_3(r,R) \leq \grandO{1} r/R \,$,
\item[\textup{iii)}] $\alpha^{an}_5(r,R) \asymp \left( r/R \right)^2 \,$, hence $\Omega(1) (r/R)^2 \leq \widetilde{\alpha}_5(r,R) \leq \grandO {1} r/R \,$. 
\ei
\end{prop}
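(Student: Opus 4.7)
This proposition records three ``universal'' arm exponents for annealed critical Voronoi percolation: the polychromatic half-plane $2$-arm exponent equals $1$, and the polychromatic half-plane $3$-arm and whole-plane $5$-arm exponents both equal $2$. The ``hence'' bounds on the $\widetilde{\alpha}$'s are immediate consequences of the $\asymp$ bounds on the $\alpha^{an}$'s: Jensen's inequality gives $\widetilde{\alpha}_j(r,R) \geq \alpha^{an}_j(r,R)$, while $\Prob^\eta[\,\cdot\,] \in [0,1]$ forces $\widetilde{\alpha}_j(r,R)^2 = \E[(\Prob^\eta[\,\cdot\,])^2] \leq \E[\Prob^\eta[\,\cdot\,]] = \alpha^{an}_j(r,R)$, and therefore $\widetilde{\alpha}_j \leq \sqrt{\alpha^{an}_j}$. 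So the real content is to establish the three annealed $\asymp$ statements.

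My plan is to transport to the annealed Voronoi setting at $p=1/2$ the classical Bernoulli-percolation proofs of these universal exponents. The required ingredients are all available here: the annealed RSW theorem (Theorem~\ref{t.tassion}); the FKG inequality, which holds quenched given $\eta$ for events monotone in the colouring and hence passes to the annealed level at $p=1/2$; the colour symmetry $\omega \leftrightarrow -\omega$ at $p=1/2$; and the annealed quasi-multiplicativity (Proposition~\ref{p.QM}). These exponents are ``universal'' precisely because the arguments depend only on these four inputs and on no specific lattice structure.

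For the lower bounds I would produce explicit constructions: realise each arm event through a fixed finite family of rectangle crossings of bounded aspect ratio, placed in disjoint sub-sectors of the annulus, chained through all dyadic scales in $[r,R]$. At each scale Theorem~\ref{t.tassion} and FKG give a uniform positive probability for the single-scale configuration, and Proposition~\ref{p.QM} chains the scales. An arm-separation step, ensuring that arms arrive on prescribed, macroscopically separated sub-arcs of $\partial B_{2^k}$, is used at every scale so that the correct polynomial exponents $1, 2, 2$ come out sharp and no logarithmic factor is lost.

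For the upper bounds I would proceed in the order half-plane $2$-arm, half-plane $3$-arm, whole-plane $5$-arm, each feeding into the next. The half-plane $2$-arm bound $O(r/R)$ comes from a standard exploration of the topmost black left-right crossing of a macroscopic rectangle sitting on the boundary of the half-plane: the event that this crossing rises above height $r$ is controlled by a discrete-martingale argument fed by Theorem~\ref{t.tassion} and FKG at each step. The half-plane $3$-arm and whole-plane $5$-arm bounds then follow from gluing arguments expressing each event as a disjoint occurrence of a half-plane $2$-arm event and a further arm, for which the previous estimate and Theorem~\ref{t.tassion} combine to produce the missing $r/R$ factor. The main technical obstacle, in my view, is the arm-separation step underlying these gluings: one must show that, conditionally on the arm event, the arms can be routed through prescribed, macroscopically separated sub-arcs of $\partial B_r$ and $\partial B_R$ with uniformly positive conditional probability. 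In the Voronoi setting this adds the need to rule out large Poisson cells that could straddle the separating sectors, which is handled by elementary Poisson tail bounds ensuring that, inside $B_R$, only cells of diameter $O(\log R)$ contribute; once this is in place, the remainder of the proof is an essentially mechanical transcription of the classical Bernoulli argument.
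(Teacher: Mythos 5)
First, a point of comparison: the paper does not actually prove this proposition --- it is imported verbatim from Proposition~2.7 of \cite{scaling_voro}, and only the two ``hence'' implications are argued here. Your treatment of those implications is correct and is exactly the intended one: Jensen gives $\widetilde{\alpha}\geq\alpha^{an}$, while $\Prob^\eta[\,\cdot\,]\le 1$ gives $\widetilde{\alpha}^{\,2}\le\alpha^{an}$ and hence $\widetilde{\alpha}\le\sqrt{\alpha^{an}}$.

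For the core $\asymp$ statements your sketch has two genuine gaps. \emph{Lower bounds:} chaining RSW crossings through the dyadic scales of $[r,R]$ yields a bound of the form $c^{\log_2(R/r)}=(r/R)^{\log_2(1/c)}$, where $c$ is the uncontrolled constant from Theorem~\ref{t.tassion}; no amount of arm separation makes this exponent come out to be exactly $1$ or $2$. The sharp lower bounds come instead from a first-moment counting argument: with probability $\Omega(1)$ at least one box in a family of $R/r$ (resp.\ $(R/r)^2$) translates carries the arm event, and translation invariance of the \emph{annealed} law converts this into a bound on each individual probability --- which is precisely why the remark following the proposition stresses that translation invariance of the annealed model is the crucial input. \emph{Upper bounds for the $3$- and $5$-arm events:} you propose to write them as a disjoint occurrence of a half-plane $2$-arm event and a further arm. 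This fails twice over. Reimer's inequality is \emph{false} for the annealed measure (Section~1.2 of this paper is devoted to that point, and circumventing it is one of the paper's contributions via~\eqref{e.main2}, whose proof uses the present proposition --- so invoking it here would be circular). And even granting such an inequality, multiplying $\alpha^{an,+}_2\lesssim r/R$ by the probability of one additional arm does not produce $(r/R)^2$: a single arm has a non-universal exponent (conjecturally $1/3$ in the half-plane), not exponent $1$. The actual mechanism is again deterministic disjointness/uniqueness combined with counting: two distinct boundary boxes cannot both realize the half-plane $2$-arm event with prescribed landing arcs, and a $5$-arm configuration with prescribed, cyclically ordered landing sectors determines its central box up to $O(1)$ choices. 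Your observation that large Voronoi cells must be ruled out via Poisson tail bounds is sensible but peripheral to these issues.
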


\begin{rem}
Thanks to~\eqref{e.main2} (from Theorem~\ref{t.quenched_arm}), we will be able to deduce from Proposition~\ref{p.universal} that
\begin{equation}\label{e.universal_bis}
\widetilde{\alpha}^+_2(r,R) \asymp r/R, \, \widetilde{\alpha}^+_3(r,R) \asymp (r/R)^2 \text{ and } \widetilde{\alpha}_5(r,R) \asymp (r/R)^2 \, .
\end{equation}
However, in order to prove~\eqref{e.main2}, we will only be able to rely on the weaker estimates from Proposition~\ref{p.universal}. The reason why we have not managed to prove~\eqref{e.universal_bis} without relying on~\eqref{e.main2} is that the computation of these universal exponents uses crucially the translation invariance properties of the \textbf{annealed} model.
\end{rem}

In Appendix~D.2 of~\cite{scaling_voro}, we have proved upper bounds for the quantities
\begin{equation}\label{e.ghftf}
\E \left[ \Prob^\eta \left[ \Piv_S( \arm_j(r,R) ) \right]^2 \right] \, ,
\end{equation}
where $S$ is a square included in the annulus $A(r,R)$. Here, we state five lemmas (that are consequences of the results from~\cite{scaling_voro} or that can be proved by using methods from~\cite{scaling_voro}, see the sketch of proof below) that give upper bounds for~\eqref{e.ghftf} when $S$ is respectively in the bulk of $A(r,R)$, near the outer boundary of this annulus, in the unbounded connected component of $\R^2 \setminus A(r,R)$, near the inner boundary of this annulus, and in the bounded component of $\R^2 \setminus A(r,R)$.
\medskip

Let $y$ be a point of the plane, let $\rho \geq 1$, let $S = B_\rho(y)$, and let $r,R$ be such that $\rho \leq r/10$ and $r \leq R/2$. Also, let $j \in \N^*$.

\begin{lem}\label{l.piv_arm_1}
Let $y$, $\rho$, $r$, $R$ and $S=B_{\rho}(y)$ be as above. Assume that $S \subseteq A(2r,R/2)$ and let $d \geq r$ be the distance between $y$ and $0$. Then,
\[
\E \left[ \Prob^\eta \left[ \Piv_S( \arm_j(r,R) ) \right]^2 \right] \leq \grandO{1} \left( \widetilde{\alpha}_j(r,R) \, \widetilde{\alpha}_4(\rho,d)  \right)^2 \, .
\]
\end{lem}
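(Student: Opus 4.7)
The plan is to follow the pivotal-decomposition strategy of Appendix~D.2 of \cite{scaling_voro}, adapted to the second-moment quantity $\E[\Prob^\eta[\Piv_S(\cdot)]^2]$ in place of the annealed probability $\Pro[\Piv_S(\cdot)]$. Writing $d = \dist(S,\partial A(r,R))$, the hypotheses $\rho \leq r/10$ and $r \leq R/2$ ensure that $d \geq \Omega(1)\rho$ and that $B_d(y) \subseteq A(r,R)$ up to a trivial enlargement, so there is genuine room between $S$, the ball $B_d(y)$, and the boundary of the annulus. The key deterministic inclusion to establish is
\[
\Piv_S(\arm_j(r,R)) \subseteq \arm_4(y;\rho,d) \cap \calE,
\]
where $\arm_4(y;\rho,d)$ is the alternating four-arm event in $A(y;\rho,d)$ (forced because recolouring the cells meeting $S$ can flip $\arm_j(r,R)$ only if four alternating arms emanate from $S$), and $\calE$ is a $j$-arm-type event measurable outside $B_d(y)$ whose realisation together with $\arm_4(y;\rho,d)$ produces the full $\arm_j(r,R)$.

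Given the inclusion, the second step is to factorise the quenched probability using independence of the Poisson process on disjoint regions. Since $\arm_4(y;\rho,d)$ is (essentially) measurable inside $B_d(y)$ while $\calE$ is measurable outside, and both $\eta$ and the i.i.d.\ colouring have a product structure, we obtain, up to the boundary-cell issue discussed below,
\[
\Prob^\eta[\Piv_S(\arm_j(r,R))] \leq \grandO{1}\,\Prob^\eta[\arm_4(y;\rho,d)]\,\Prob^\eta[\calE].
\]
Squaring and taking annealed expectation, the two $\eta$-factors are independent, so that by translation invariance $\E[\Prob^\eta[\arm_4(y;\rho,d)]^2] = \widetilde{\alpha}_4(\rho,d)^2$, while $\E[\Prob^\eta[\calE]^2] \leq \grandO{1}\,\widetilde{\alpha}_j(r,R)^2$ after gluing the four arm endpoints at $\partial B_d(y)$ to the $j$ arms landing on $\partial B_r$ and $\partial B_R$ using the quasi-multiplicativity Proposition~\ref{p.QM} and the universal half-plane estimates from Proposition~\ref{p.universal}.

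The main obstacle, familiar from \cite{scaling_voro}, is that a single Voronoi cell may straddle $\partial B_d(y)$, so the clean spatial factorisation above is only approximate: the inside and outside configurations share information through such boundary cells. The standard remedy is to insert a thin buffer annulus around $\partial B_d(y)$ (of logarithmic width compared to $d$), exploit the super-polynomial tail on cell diameters for a homogeneous Poisson tessellation in order to forbid cells bridging the buffer on a good event, and absorb the multiplicative cost of the buffer via Proposition~\ref{p.QM}. Since all these technical ingredients are already developed in Appendix~D of \cite{scaling_voro}, the proof becomes essentially bookkeeping on top of the deterministic inclusion and the independence factorisation, with no genuinely new conceptual ingredient beyond what has been proved there for the first-moment estimate.
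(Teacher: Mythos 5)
Your argument is correct and takes essentially the same route as the paper: the paper's own proof of this lemma is a one-line citation of Lemma~D.13 of~\cite{scaling_voro} (the ``bulk'' case), and the scheme you sketch --- the deterministic inclusion into a four-arm event in $A(y;\rho,d)$ intersected with a $j$-arm-type event supported outside $B_d(y)$, quenched factorisation, independence of the two $\eta$-factors after squaring, and the buffer/dense-event fix for cells straddling $\partial B_d(y)$ --- is precisely the strategy of that cited result. The only cosmetic slip is your appeal to the half-plane three-arm estimates of Proposition~\ref{p.universal}, which are not needed here in the bulk and enter only in the boundary cases of Lemmas~\ref{l.piv_arm_2}--\ref{l.piv_arm_5}.
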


If $S \cap A(R/2,R) \neq \emptyset$ then we use the following notations: Let $d_0=d_0(S)$ be the distance between $S$ and the closest side of $B_R$ and let $y_0$ be the orthogonal projection of $y$ on this side. Also, let $d_1=d_1(S) \geq d_0$ be the distance between $y_0$ and the closest corner of $B_R$. Write $\arm^{++}_j(\cdot,\cdot)$ for the $j$-arm event in the quarter plane and let $\widetilde{\alpha}^{++}_j(\cdot,\cdot) := \sqrt{\E \left[ \Prob^\eta \left[ \arm^{++}_j(\cdot,\cdot) \right]^2 \right]}$. We have the following:

\begin{lem}\label{l.piv_arm_2}
Let $y$, $\rho$, $r$, $R$ and $S=B_{\rho}(y)$ be as above. Assume that $S \cap A(R/2,R) \neq \emptyset$. Remember that $\rho \leq r/10$ and $r \leq R/2$. Then,
\[
\E \left[ \Prob^\eta \left[ \Piv_S( \arm_j(r,R) ) \right]^2 \right] \leq \grandO{1} \left( \widetilde{\alpha}_j(r,R) \, \widetilde{\alpha}^{++}_3(d_1+\rho,R) \, \widetilde{\alpha}^{+}_3(d_0+\rho,d_1) \, \widetilde{\alpha}_4(\rho,d_0) \right)^2 \, . 
\]
\end{lem}

The following lemma roughly says that, if we want to use our bounds to estimate the sum
\begin{equation}\label{e.sum_infinite_finite}
\sum_{S \text{ square of the grid } 2\rho\Z^2} \E \left[ \Prob^\eta \left[ \Piv_{S}( \arm_j(r,R) ) \right]^2 \right] \, ,
\end{equation}
and if we forget the terms corresponding to the squares $S$ that are in the unbounded component of $\R^2 \setminus A(r,R)$, then this does not change the order of the estimate. \blue{(Here and below, a ``square of the grid $2\rho\Z^2$'' is a unit square of this grid, i.e.\ a $2\rho \times 2\rho$ square.) More precisely, given a $2\rho \times 2\rho$ square $S$ that intersects $\partial B_R$ (in particular, such a square satisfies the hypothesis of Lemma \ref{l.piv_arm_2}), we prove that the sum \eqref{e.sum_infinite_finite} restricted to the (infinite) family of squares $S'$ of the grid $2\rho\Z^2$ that satisfy the property ``$S'$ does not intersect $B_R$ and, among all squares of the grid $2\rho\Z^2$ that intersect $\partial B_R$, $S$ is the closest'' is less than or equal to the bound from Lemma \ref{l.piv_arm_2} for the \textbf{single} square $S$.}

\begin{figure}[!h]
\begin{center}
\includegraphics[scale=0.5]{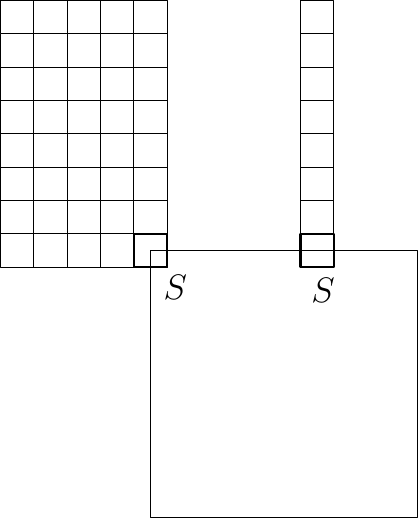}
\end{center}
\caption{\blue{A square $S$ and the squares $S' \in \setS$ from Lemma \ref{l.piv_arm_3} in two cases.}}
\end{figure}

\begin{lem}\label{l.piv_arm_3}
Let $\rho \geq 1$ and let $r,R$ be such that $\rho \leq r/100$ and $r \leq R/2$. Also, let $S$ be a square of the grid $2\rho\Z^2$ that intersects $\partial B_R$. Moreover, let $\setS$ be the set of all squares $S'$ of the grid $2\rho \Z^2$ that do not intersect $B_R$ and are such that $S$ is the argmin of $\dist(S'',S')$ where $S''$ ranges over the set of squares of the grid $2\rho\Z^2$ that intersect $\partial B_R$. Then,
\[
\sum_{S' \in \setS} \E \left[ \Prob^\eta \left[ \Piv_{S'}( \arm_j(r,R) ) \right]^2 \right] \leq \grandO{1} \left( \widetilde{\alpha}_j(r,R) \, \widetilde{\alpha}^{++}_3(d_1+\rho,R) \, \widetilde{\alpha}^{+}_3(d_0+\rho,d_1) \, \widetilde{\alpha}_4(\rho,d_0) \right)^2 \, ,
\]
where $d_0=d_0(S)=0$ and $d_1=d_1(S)$.
\end{lem}

Let us now study the quantity $\E \left[ \Prob^\eta \left[ \Piv_{S}( \arm_j(r,R) ) \right]^2 \right]$ when $S$ is at distance less than $2r$ from $0$. If $S \cap A(r,2r) \neq \emptyset$, we use the following notations: Let $d_0=d_0(S)$ be the distance between $S$ and the closest side of $B_r$ and let $y_0$ be the orthogonal projection of $y$ on this side. Also, let $d_1=d_1(S)$ be the distance between $y_0$ and the closest corner of $B_r$. Write $\arm^{(++)^c}_j(\cdot,\cdot)$ for the $j$-arm event in the plane without the quarter plane and let $\widetilde{\alpha}^{(++)^c}_j(\cdot,\cdot) = \sqrt{\E \left[ \Prob^\eta \left[ \arm^{(++)^c}_j(\cdot,\cdot) \right]^2 \right]}$. 

\begin{lem}\label{l.piv_arm_4}
Let $y$, $\rho$, $r$, $R$ and $S=B_{\rho}(y)$ be as above. Assume that $S \cap A(r,2r) \neq \emptyset$. Remember that $\rho \leq r/10$ and $r \leq R/2$. If $d_1\geq d_0$, then
\[
\E \left[ \Prob^\eta \left[ \Piv_S( \arm_j(r,R) ) \right]^2 \right] \leq \grandO{1} \left( \widetilde{\alpha}_j(r,R) \, \widetilde{\alpha}^{(++)^c}_3(d_1+\rho,r) \, \widetilde{\alpha}^{+}_3(d_0+\rho,d_1) \, \widetilde{\alpha}_4(\rho,d_0) \right)^2 \, . 
\]
If we rather have $d_1 \leq d_0$, then
\[
\E \left[ \Prob^\eta \left[ \Piv_S( \arm_j(r,R) ) \right]^2 \right] \leq \grandO{1} \left( \widetilde{\alpha}_j(r,R) \, \widetilde{\alpha}^{(++)^c}_3(d_0+\rho,r) \, \widetilde{\alpha}_4(\rho,d_0) \right)^2 \, . 
\]
\end{lem}

\begin{lem}\label{l.piv_arm_5}
Let $\rho \geq 1$ and let $r,R$ be such that $\rho \leq r/100$ and $r \leq R/2$. Also, let $S$ be a square of the grid $2\rho\Z^2$ that intersects $\partial B_r$. Moreover, let $\setS$ be the set of all squares $S'$ of the grid $2\rho \Z^2$ that are included in $B_r$ and are such that $S$ is the argmin of $\dist(S'',S)$ where $S''$ spans over the set of squares of the grid $2\rho\Z^2$ that intersects $\partial B_r$. Then,
\[
\sum_{S' \in \setS} \E \left[ \Prob^\eta \left[ \Piv_{S'}( \arm_j(r,R) ) \right]^2 \right] \leq \grandO{1} \left( \widetilde{\alpha}_j(r,R) \, \widetilde{\alpha}^{(++)^c}_3(d_1+\rho,r) \, \widetilde{\alpha}^{+}_3(d_0+\rho,d_1) \, \widetilde{\alpha}_4(\rho,d_0) \right)^2 \, ,
\]
where $d_0=d_0(S)=0$ and $d_1=d_1(S)$.
\end{lem}

\begin{proof}[Proof of Lemmas~\ref{l.piv_arm_1} to~\ref{l.piv_arm_5}] In Section~4.3 of~\cite{scaling_voro}, we have proved analogous estimates for the quantities $\Pro \left[ \Piv_S(\arm_j(1,R) \right]$. Moreover, Lemma~D.13 of~\cite{scaling_voro} gives estimates on the quantities $\E \left[ \Prob^\eta \left[ \Piv_S(\arm_j(r,R) \right]^2 \right]$ when $S$ is in the ``bulk'' of $B_R$. In particular, this lemma implies Lemma~\ref{l.piv_arm_1}. The proof of Lemmas~\ref{l.piv_arm_2} and~\ref{l.piv_arm_4} is very similar except that we have to take care about boundary issues. The way to adapt the proofs in the case where $S$ is in the bulk to the case where $S$ is close to the boundary is the same as in Section~4.3 of~\cite{scaling_voro}, so we leave the details to the reader. Similarly, the way we deduce Lemmas~\ref{l.piv_arm_3} and~\ref{l.piv_arm_5} from respectively Lemmas~\ref{l.piv_arm_2} and~\ref{l.piv_arm_4} is the same as for the analogous results from Section~4.3 of~\cite{scaling_voro}.
\end{proof}

\subsection{The ``good'' events}\label{ss.good}

\blue{The results from this subsection are not used in the proof of \eqref{e.main} from Theorem \ref{t.quenched_arm}. So the reader who is mainly interested in this result can skip this subsection.}

Since we study a model in random environment, it is important to have estimates on some ``good'' events measurable with respect to $\eta$. The definitions and the estimates that we state in this section are from~\cite{scaling_voro}. We first define the ``dense'' events that help us to have spatial independence properties.
\begin{defi}
If $\delta \in (0,1)$ and $D$ is a bounded Borel subset of the plane, we write $\dense_\delta(D)$ for the event that, for every $u \in D$, there exists $x \in \eta \cap D$ such that $||x-u||_2 < \delta \cdot \diam(D)$.
\end{defi}
\begin{lem}[e.g. Lemma~2.11 of~\cite{scaling_voro}]\label{l.dense}
Let $R \geq 1$ and $\delta \in (0,1)$. We have
\[
\Pro \left[ \dense_\delta(B_R) \right] \geq 1-\grandO{1} \, \delta^{-2} \, \exp \left( -\frac{(\delta \cdot R)^2}{2} \right) \, .
\]
\end{lem}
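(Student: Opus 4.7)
The plan is a straightforward covering argument based on the spatial independence of the Poisson process.

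First, I would partition $B_R$ into a grid of $m^2 = \grandO{\delta^{-2}}$ congruent axis-aligned closed sub-squares, choosing the integer $m$ so that each sub-square $Q$ has diameter strictly less than $\delta\cdot\diam(B_R)=2\sqrt{2}\delta R$ and area at least $(\delta R)^2/2$. The choice $m=\lceil\sqrt{2}/\delta\rceil$ works: from $\sqrt{2}/\delta\le m\le \sqrt{2}/\delta+1$ one reads off immediately that the common side $s=2R/m$ satisfies $s\sqrt{2}\le 2\delta R$ and $s^2\ge 4R^2\delta^2/(\sqrt{2}+1)^2>(\delta R)^2/2$.

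Next I would observe that if every sub-square $Q$ meets $\eta$, then $\dense_\delta(B_R)$ automatically holds: any $u\in B_R$ lies in some sub-square $Q$, and any $x\in \eta\cap Q\subseteq \eta\cap B_R$ satisfies $\|x-u\|_2\le \diam(Q)<\delta\cdot\diam(B_R)$. It therefore suffices to bound the probability that at least one sub-square is empty of points of $\eta$.

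Finally, by the Poisson property, $\Pro\bigl[\eta\cap Q=\emptyset\bigr]=\exp(-\area(Q))\le \exp(-(\delta R)^2/2)$ for each $Q$, and a union bound over the $m^2=\grandO{\delta^{-2}}$ sub-squares yields
\[
\Pro\bigl[\neg\dense_\delta(B_R)\bigr]\le m^2 \exp\bigl(-(\delta R)^2/2\bigr) = \grandO{1}\,\delta^{-2}\exp\bigl(-(\delta R)^2/2\bigr),
\]
which is the claimed bound. There is no real obstacle; the only minor subtlety is choosing $m$ so that both the diameter bound and the area bound $s^2\ge(\delta R)^2/2$ hold simultaneously, which the choice above achieves.
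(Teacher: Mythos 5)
Your argument is correct and is exactly the standard covering-plus-union-bound proof one expects for this density estimate (the paper only cites it from~\cite{scaling_voro} without reproducing a proof). The arithmetic checks out: with $m=\lceil\sqrt{2}/\delta\rceil$ each sub-square has diameter at most $2\delta R<\delta\cdot\diam(B_R)$ and area at least $(12-8\sqrt{2})(\delta R)^2>(\delta R)^2/2$, so the Poisson void probability and the union bound over $\grandO{\delta^{-2}}$ squares give precisely the claimed bound.
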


\blue{We will often use the properties of the ``dense'' events without providing many details so let us explain here how we will use these events: let $D_1,D_2 \subseteq \R^2$ be two disjoint sets and let $A_i$, $i\in \{1,2\}$ be an event that depends only on the colours of the points in $D_i$ (by ``points'' we mean ``point of the plane'' and not ``point of $\eta$''; for instance, $\cross(R,R)$ depends only on the colours of the points in $[-R,R]^2$). Moreover, assume that two ``dense'' events $\dense_i=\dense_{\delta_i}(D_i)$, $i \in \{1,2\}$ hold and that under $\dense_1$ (resp. $\dense_2$) no Voronoi cell of a point $x \in \eta \cap D_2$ can intersect $D_1$ (resp. no Voronoi cell of a point $x \in \eta \cap D_1$ can intersect $D_2$). Then, $A_1 \cap \dense_1$ is independent of $A_2 \cap \dense_2$. Moreover, if $\eta \in \dense_1 \cap \dense_2$ then $A_1$ is independent of $A_2$ under $\Prob^\eta$.}
\medskip

We now define some sets of quads and state a result from~\cite{scaling_voro} that roughly says that, with high probability, the quenched crossing probabilities of all the quads in these sets are non-negligible. The main tool in the proof of this result was the quenched box-crossing result of~\cite{ahlberg2015quenched}.

\begin{defi}\label{d.a_lot_of_quads}
Let $D$ be a bounded subset of the plane and let $\delta > 0$. We denote by $\mathcal{Q}'_\delta(D)$ the set of all quads $Q \subseteq D$ which are drawn on the grid $(\delta \, \diam(D)) \cdot \Z^2$ (i.e. whose sides are included in the edges of $(\delta \, \diam(D)) \cdot \Z^2$ and whose corners are vertices of $(\delta \, \diam(D)) \cdot \Z^2$). Also, we denote by $\mathcal{Q}_\delta(D)$ the set of all quads $Q \subseteq D$ such that there exists a quad $Q' \in \mathcal{Q}'_\delta(D)$ satisfying $\cross(Q') \subseteq \cross(Q)$. \red{Note that these sets are empty if $\delta > 1$.}

Moreover, we let $\widetilde{\mathcal{Q}}'_\delta(D) \subseteq \mathcal{Q}'_\delta(D)$ be the set of all quads $Q \subseteq D$ such that there exists $k \in \N$ such that $Q$ is  drawn on the grid $(2^k \, \delta \, \diam(D)) \cdot \Z^2$ and the length of each side of $Q$ is less than $100 \cdot 2^k \, \delta \, \diam(D)$. Also, we write $\widetilde{\mathcal{Q}}_\delta(D)$ for the set of all quads $Q \subseteq D$ such that there exists a quad $Q' \in \widetilde{\mathcal{Q}}'_\delta(D)$ satisfying $\cross(Q') \subseteq \cross(Q)$.
\end{defi}

\blue{Thus, $\widetilde{\mathcal{Q}}'_\delta(D)$ consists of the quads in $D$ that are ``not too long'' and whose opposite sides are ``not too close to each other''.}

\begin{prop}[Proposition~3.2 of~\cite{scaling_voro}\footnote{\red{This proposition is stated for $\delta \in (0,1)$ but the proof is the same if $\delta = 1$ and the result is trivial if $\delta > 1$ because the set $\widetilde{\mathcal{Q}}_{\delta}(D)$ is empty in this case. }}]\label{p.a_lot_of_quads}
Let $\delta,\gamma \in (0,+\infty)$. There exist an absolute constant $C<+\infty$ and a constant $\widetilde{c} = \widetilde{c}(\gamma) \in (0,1)$ that does not depend on $\delta$ such that, for every bounded subset of the plane $D$ that satisfies $\diam(D) \geq \delta^{-2}/100$, we have
\[
\Pro \left[ \widetilde{\qbc}^\gamma_\delta(D) \right] \geq 1 - C \diam(D)^{-\gamma} \, ,
\]
where
\[
\widetilde{\qbc}^\gamma_\delta(D) = \left\lbrace \forall Q \in \widetilde{\mathcal{Q}}_{\delta}(D), \, \Prob^\eta \left[ \cross(Q) \right] \geq \widetilde{c}(\gamma) \right\rbrace \, .
\]
(The notation $\qbc$ means ``Quenched Box Crossings''.)
\end{prop}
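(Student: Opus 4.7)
The plan is to combine Tassion's annealed box-crossing (Theorem~\ref{t.tassion}) and the AGMT quenched variance bound (Theorem~\ref{t.AGMT}) with Chebyshev, the spatial independence of $\eta$ on disjoint regions, and a union bound over the discrete family $\widetilde{\mathcal{Q}}'_\delta(D)$. Since each $Q \in \widetilde{\mathcal{Q}}_\delta(D)$ admits a witness $Q' \in \widetilde{\mathcal{Q}}'_\delta(D)$ with $\cross(Q') \subseteq \cross(Q)$, hence $\Prob^\eta[\cross(Q)] \geq \Prob^\eta[\cross(Q')]$, the task reduces to controlling $\Prob^\eta[\cross(Q')]$ uniformly over $\widetilde{\mathcal{Q}}'_\delta(D)$.

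First I count $|\widetilde{\mathcal{Q}}'_\delta(D)|$: at scale $s_k = 2^k \delta \diam(D)$ the quad corners lie on a mesh-$s_k$ grid inside $D$ (giving $\grandO{1}(\diam(D)/s_k)^2$ positions) with each side spanning at most $100$ grid steps (giving $\grandO{1}$ shapes), so summing geometrically over $k \geq 0$ yields $|\widetilde{\mathcal{Q}}'_\delta(D)| \leq \grandO{1}\delta^{-2} \leq \grandO{1}\diam(D)$, using the standing hypothesis $\delta^{-2} \leq 100 \diam(D)$. This reduces the union bound to a polynomial number of quads.

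Next, for a fixed $Q'$ at scale $s$, Theorem~\ref{t.tassion} applied to a quad of aspect ratio $\leq 100$ gives $\Pro[\cross(Q')] \geq c_0 > 0$, and the extension of Theorem~\ref{t.AGMT} to such quads gives $\Var(\Prob^\eta[\cross(Q')]) \leq C s^{-\epsilon_0}$, so Chebyshev yields $\Pro[\Prob^\eta[\cross(Q')] < c_0/2] \leq \grandO{1}\, s^{-\epsilon_0}$. Since the scales satisfy $s \geq \delta \diam(D) \geq 10 \diam(D)^{1/2}$ and there are $\grandO{1} \diam(D)$ quads to handle, the fixed exponent $\epsilon_0$ from Theorem~\ref{t.AGMT} must be boosted to some $\gamma' \geq 2\gamma + 2$. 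The boost would go by a multi-scale iteration exploiting the Poisson independence of $\eta$ on disjoint regions: at a coarser scale one writes the crossing geometry of $Q'$ in terms of many crossings of finer sub-rectangles whose defining data depend on almost-disjoint pieces of $\eta$, then combines Chernoff-type concentration with RSW gluing and quenched FKG (valid because, conditionally on $\eta$, $\omega$ is a Bernoulli product and the crossing events are monotone in the colouring) to amplify the decay exponent at the cost of a smaller, $\gamma$-dependent constant $\widetilde{c}(\gamma) > 0$.

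A union bound over the $\grandO{1}\diam(D)$ quads then produces $\Pro[\widetilde{\qbc}^\gamma_\delta(D)^c] \leq C \diam(D)^{-\gamma}$, as claimed. The main obstacle is the boosting step: the raw AGMT bound gives only the fixed exponent $\epsilon_0$, and amplifying it to arbitrary $\gamma'$ while keeping the constant $\widetilde{c}(\gamma)$ independent of $\delta$ is the technical heart of the argument; the cardinality count and the Chebyshev step are routine, and the quenched FKG reduction to independent sub-boxes is standard once one sets up the right multi-scale geometry.
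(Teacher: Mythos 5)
Your architecture is the right one, and it matches the intended route (note that the present paper does not reprove this proposition --- it is imported from \cite{scaling_voro}, whose proof is indeed built on the quenched box-crossing result of \cite{ahlberg2015quenched}): reduce to the discrete family $\widetilde{\mathcal{Q}}'_\delta(D)$ via the witness quads, count it as $\grandO{1}\delta^{-2}\leq\grandO{1}\diam(D)$ using $\diam(D)\geq\delta^{-2}/100$, apply annealed RSW plus the quenched variance bound plus Chebyshev to each quad at its own scale $s\geq\Omega(1)\diam(D)^{1/2}$, and union bound. Your arithmetic ($\gamma'\geq 2\gamma+2$ suffices to beat the entropy) is also correct.

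The genuine gap is the amplification step, which you correctly identify as the heart of the matter but do not actually carry out; and the sketch you give for it ("Chernoff-type concentration with RSW gluing and quenched FKG") does not pin down a mechanism that works. The clean argument is the following. Fix $K=K(\gamma)$ and, inside a fixed quad $Q'$ at scale $s$, choose $K$ \emph{disjoint} parallel sub-rectangles $R_1,\dots,R_K$ at scale $s/K$, each positioned so that a crossing of $R_j$ (or a bounded FKG-gluing of crossings of $\grandO{1}$ rectangles of aspect ratio $\grandO{1}$ at scale $s/K$) forces a crossing of $Q'$; hence $\Prob^\eta[\cross(Q')]\geq\max_j\Prob^\eta[\cross(R_j)]$, and it suffices that \emph{one} $j$ be good. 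On the event $\dense_{\cdot}$ (whose complement has superpolynomially small probability by Lemma~\ref{l.dense}), the random variables $\Prob^\eta[\cross(R_j)]$, $j=1,\dots,K$, depend on the restrictions of $\eta$ to disjoint regions and are therefore independent; each fails to exceed $c_K/2$ with probability $\grandO{1}(s/K)^{-\epsilon_0}$ by Theorem~\ref{t.tassion}, Theorem~\ref{t.AGMT} and Chebyshev. The probability that all $K$ fail is then $\grandO{1}(s/K)^{-K\epsilon_0}$, and taking $K\epsilon_0>2\gamma+2$ gives the required decay with $\widetilde{c}(\gamma)=c_K/2$ depending on $\gamma$ (through $K$) but not on $\delta$ --- exactly the dependence claimed in the statement. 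Without some version of this "one out of $K$ independent witnesses" structure, a single application of Theorem~\ref{t.AGMT} with its fixed exponent $\epsilon_0$ cannot yield an arbitrary $\gamma$, so as written your proof does not close.
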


\red{For every $\gamma > 0$, we fix a constant $\widetilde{c}(\gamma)$ as in Proposition~\ref{p.a_lot_of_quads}.}

\begin{rem}\label{r.a_lot_of_quads}
Note that, by gluing arguments,\footnote{\blue{More precisely, we use  that if all the (vertical and horizontal) $2\delta\diam(D) \times \delta \diam(D)$ rectangles \red{and all the $\delta\diam(D) \times \delta \diam(D)$ squares} of the grid $\delta\diam(D)\Z^2$ \red{included in $D$} are crossed, then this is also the case of all the quads $Q \in \mathcal{Q}_\delta(D)$.}} there exists an absolute constant $C_1 \in (0,+\infty)$ such that, if $c=c(\delta,\gamma):=\widetilde{c}(\gamma)^{C_1\delta^{-2}}$, then
\[
\widetilde{\qbc}^\gamma_\delta(D)  \subseteq \bigcap_{k \in \N} \qbc^\gamma_{2^k\delta}(D) \subseteq \qbc^\gamma_{\delta}(D) \, ,
\]
where
\[
\qbc^\gamma_\delta(D) = \left\lbrace \forall Q \in \mathcal{Q}_{\delta}(D), \, \Prob^\eta \left[ \cross(Q) \right] \geq c(\delta,\gamma) \right\rbrace \, .
\]
\red{For every $\gamma,\delta > 0$, we fix a constant $c(\delta,\gamma)$ as above (i.e.\ we fix a constant $C_1$ as above).}

\red{As a result, Proposition \ref{p.a_lot_of_quads} implies that} there exists an absolute constant $C<+\infty$ such that, for every $\delta,\gamma \in (0,+\infty)$, and every bounded subset of the plane $D$ satisfying $\diam(D) \geq \delta^{-2}/100$, we have
\[
\Pro \left[ \qbc^\gamma_\delta(D) \right] \geq 1 - C \diam(D)^{-\gamma} \, .
\] 
\end{rem}


\subsection{A quenched quasi-multiplicativity property}\label{ss.quenched_prop}

\blue{The results from this subsection are not used in the proof of \eqref{e.main} from Theorem \ref{t.quenched_arm}. Moreover, they are not used in the proof of Theorem \ref{t.strict}. So the reader who is mainly interested in these results can skip this subsection.}

In~\cite{scaling_voro}, we have proved the quasi-multiplicativity property for the quantities $\alpha^{an}_j(r,R)$ and $\widetilde{\alpha}_j(r,R)$ (see Proposition~\ref{p.QM} of the present paper). The proof was rather technical because of the multiple passages from quenched to annealed estimates. The proof of the following property is much easier.
\begin{prop}\label{p.quenched_QM}
For every $\gamma > 0$ and every $j \in \{ 1 \} \cup 2\N^*$, there exists $C=C(\gamma,j) \in [1,+\infty)$ such that, for every $r_0 \in [1,+\infty)$, the following holds with probability larger than $1-C r_0^{-\gamma}$: For every $r_1,r_2,r_3 \in [r_0,+\infty)$ that satisfy $r_1 \leq r_2 \leq r_3$, we have
\begin{equation}\label{e.quenched_QM}
\frac{1}{C} \, \Prob^\eta \left[ \arm_j(r_1,r_3) \right] \leq \Prob^{\eta} \left[ \arm_j(r_1,r_2) \right] \, \Prob^\eta \left[ \arm_j(r_2,r_3) \right] \leq C \, \Prob^\eta \left[ \arm_j(r_1,r_3) \right] \, .
\end{equation}
\end{prop}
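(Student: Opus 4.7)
I would prove this by adapting Kesten's annealed quasi-multiplicativity argument to the quenched setting, carried out on a high-probability event on the environment. First, define
\[
G(r_0) = \bigcap_{k \geq 0}\bigl[\dense_{\delta_0}(B_{2^k r_0}) \cap \qbc^{\gamma+2}_{\delta_0}(B_{2^k r_0})\bigr]
\]
for a small absolute constant $\delta_0$. Lemma~\ref{l.dense} together with Remark~\ref{r.a_lot_of_quads} and a dyadic union bound yield $\Pro[G(r_0)^c] \leq C r_0^{-\gamma}$; every estimate below will hold deterministically in $\eta$ on $G(r_0)$. The two features of $G(r_0)$ that drive the proof are (i) every Voronoi cell intersecting $B_{2^k r_0}$ has diameter at most $\delta_0 \cdot 2^{k+1} r_0$, which gives spatial independence of quenched events supported on disjoint, well-separated regions, and (ii) uniform quenched lower bounds on crossings of all quads in $\mathcal{Q}_{\delta_0}$ at every dyadic scale (``quenched box-crossings'').

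\textbf{Upper bound.} Fix $r_1 \leq r_2 \leq r_3$ in $[r_0,\infty)$ and let $r_2^\pm = (1\pm\epsilon_0)r_2$ for a small $\epsilon_0 < \delta_0/2$. On $G(r_0)$, every cell meeting $\partial B_{r_2}$ has diameter much smaller than $r_2^+ - r_2^-$, hence $\arm_j(r_1,r_2^-)$ and $\arm_j(r_2^+,r_3)$ are measurable with respect to disjoint sets of cell-colors and therefore quenched-independent. Since $\arm_j(r_1,r_3) \subseteq \arm_j(r_1,r_2^-) \cap \arm_j(r_2^+,r_3)$,
\[
\Prob^\eta[\arm_j(r_1,r_3)] \leq \Prob^\eta[\arm_j(r_1,r_2^-)]\,\Prob^\eta[\arm_j(r_2^+,r_3)].
\]
It then remains to show $\Prob^\eta[\arm_j(r_1,r_2^-)] \leq \grandO{1}\,\Prob^\eta[\arm_j(r_1,r_2)]$ on $G(r_0)$ (and symmetrically for the other factor), a quenched same-scale extension that uses quenched box-crossings of a constant number of quads in the thin annulus $A(r_2^-, r_2)$ together with the quenched Harris-FKG inequality (valid since the coloring given $\eta$ is an iid product).

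\textbf{Lower bound and main obstacle.} The lower bound is a gluing argument of the same flavor: given realizations of $\arm_j(r_1,r_2)$ and $\arm_j(r_2,r_3)$, combine them by imposing quenched crossings of a bounded number of fixed quads straddling $\partial B_{r_2}$, each of quenched probability bounded below by QBC, and then apply quenched FKG. Both directions ultimately rest on a \emph{quenched separation-of-arms lemma}: conditional on $\arm_j(r_1,r_2)$, there is a conditional probability bounded below uniformly in $\eta \in G(r_0)$ that the $j$ arms reach $\partial B_{r_2}$ in pre-specified well-separated landing zones. This is the principal technical step, and the place I expect the main difficulty. On $G(r_0)$ all ingredients of the annealed Kesten-style inductive proof (positive quenched crossing probabilities for all quads of $\mathcal{Q}_{\delta_0}$, quenched FKG, spatial independence) are simultaneously available, so the inductive argument should go through with constants now depending on $\gamma$ and $\delta_0$; the restriction $j \in \{1\}\cup 2\N^*$ is used precisely here, since for odd $j \geq 3$ the extra ``disjoint-cells'' constraint in the definition of $\arm_j$ makes the separation combinatorics substantially heavier.
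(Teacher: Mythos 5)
Your proposal follows essentially the same route as the paper: a high-probability environment event combining $\dense$ and quenched box-crossing events at every relevant scale, on which the classical Kesten/Schramm--Steif quasi-multiplicativity argument (including its separation-of-arms core, which both you and the paper defer to the classical reference) runs deterministically in $\eta$. Two small slips to fix: the constants must satisfy $\delta_0 \ll \epsilon_0$ rather than $\epsilon_0 < \delta_0/2$, so that the cells straddling $\partial B_{r_2}$ are small compared with the buffer annulus and the extension quads can be drawn on the $\delta_0$-grid; and for even $j$ the arms alternate in colour, so the gluing and same-scale extension must go through the separation-of-interfaces conditioning rather than the quenched FKG inequality.
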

\begin{proof}
Fix $\gamma > 0$. We write the proof for $j=4$ since the proof is the same for \red{the other even positive integers $j$} and is simpler for $j=1$. Let $\delta_0 \in (0,1/1000)$, let $A_n(r_0)$ be the annulus $A(5^{n-2}r_0, 5^{n+2} r_0)$, and consider the event
\begin{equation}\label{e.gpgammadelta0}
\gp^\gamma_{\delta_0}(r_0) = \bigcap_{n \geq 0} \dense_{\delta_0} \left( A_n(r_0) \right) \cap \widetilde{\qbc}_{\delta_0}^\gamma \left( A_n(r_0) \right) \, .
\end{equation}
(Where $\gp$ means ``Good Point process''.) If we follow the classical proofs of the quasi-multiplicativity property on non-random lattices, we obtain that~\eqref{e.quenched_QM} holds if $\eta \in \gp^\gamma_{\delta_0}(r_0)$ with $\delta_0$ sufficiently small. Let us be more precise: let $\eta \in \gp^\gamma_{\delta_0}(r_0)$ and let us follow Appendix~A of~\cite{schramm2010quantitative}, where the quasi-multiplicativity property is proved for bond percolation on $\Z^2$ and site percolation on the triangular lattice. All the independence properties that are needed in this appendix hold since we work \blue{under $\Prob^\eta$} and since $\eta \in \bigcap_{n \geq 0} \dense_{\delta_0/100}(A_n(r_0))$. There are three steps in the proof from~\cite{schramm2010quantitative} (which correspond respectively to Lemmas~A.2,~A.3 and~A.4 therein):
\begin{enumerate}
\item \blue{In the first step, the authors prove (by using box-crossing arguments) that there exist $C<+\infty$ and $\epsilon>0$ such that, for every $R \geq 1$, the probability that there exist two interfaces that cross the annulus $A(R,2R)$ and whose endpoints are at distance less than $R\delta$ from each other is less than $C\delta^\epsilon$. Since $\eta \in \bigcap_{n \geq 0} \widetilde{\qbc}_{\delta_0}^\gamma \left( A_n(r_0) \right)$, we can use the same box-crossing arguments to prove that this result holds as soon as $R \geq r_0$ and $\delta \geq \delta_0$ and with constants $C,\eps$ that depend on $\gamma$ (note that here it is important that the constant $\widetilde{c}$ from Proposition~\ref{p.a_lot_of_quads} does not depend on $\delta$).}
\item In the second step, the authors of~\cite{schramm2010quantitative} prove that there exists $\overline{\delta}>0$ such that, for each $\delta > 0$ and each $r,R \geq 1$ satisfying $r \leq R/2$, there exists $a=a(\delta) > 0$ such that we have the following: Let $s(r,R)$ be the minimal distance between the endpoints on $\partial B_R$ of two interfaces that cross $A(r,R)$. If we condition on $\arm_4(r,R) \cap \{ s(r,R) > \delta R \}$, then the probability of $\arm_4(r,4R) \cap \{ s(r,4R) > \overline{\delta} R \}$ is larger than $a$. Since $\eta \in \cap_{n \geq 0} \widetilde{\qbc}_{\delta_0}^\gamma \left( A_n(r_0) \right)$ and since\footnote{See~Remark~\ref{r.a_lot_of_quads}.}
\[
\widetilde{\qbc}_{\delta_0}^{\gamma} \left( A_n(r_0) \right) \subseteq \bigcap_{k \geq 0} \qbc_{2^k  \delta_0}^{\gamma} \left( A_n(r_0) \right) \, ,
\]
we can use the same box-crossing arguments as in~\cite{schramm2010quantitative} to prove that there exists $\overline{\delta}>0$ such that, if $\delta_0 \leq \overline{\delta}$, then this result holds for any $\delta \geq \red{100}\delta_0$ and any $r,R \geq 1$ such that $r \leq R/2$ and $R \geq r_0$ (and with a constant $a=a(\delta)$ that also depends on $\gamma$).

Let us be a little more precise about the adaptation of the proof. Let $r,R$ be such that $r \leq R/2$ and $R \geq r_0$ and assume that $\arm_4(r,R) \cap \{ s(r,R) \geq \delta R\}$ holds (we keep the same notation as in the case of Bernoulli percolation). Moreover, let $k \in \N$ be such that $2^k\delta_0 \leq \delta \leq 2^{k+1} \delta_0$ and $n \in \N$ be such that $5^{n-1}r_0 \leq R \leq 5^nr_0 $. Then, we can use the box-crossing estimates given by $\qbc_{2^k\delta_0}^\gamma(A_n(r_0))$ to extend the four arms with probability larger than some constant $a$ that depends only on $\delta$ and $\gamma$.
\item The third step is a combination of the first two steps \blue{that relies on spatial independence. The proof in our case is the same as in \cite{schramm2010quantitative}.}
\end{enumerate}
Finally, the quasi-multiplicativity property holds for every $r_1,r_2,r_3 \geq r_0$ as soon as $\eta \in \gp^\gamma_{\delta_0}(r_0)$ for some $\delta_0$ sufficiently small, so it only remains to prove that for every $\delta_0$ we have
\[
\Pro \left[ \gp^\gamma_{\delta_0}(r_0) \right] \geq 1-\grandO{1} r_0^{-\gamma} \, ,
\]
where the constants in the $\grandO{1}$ only depend on $\delta_0$ and $\gamma$. This is actually a direct consequence of Lemma~\ref{l.dense} \blue{(or rather of the analogue for annuli instead of squares, but the proof is the same)} and of Proposition~\ref{p.a_lot_of_quads}.
\end{proof}

\begin{rem}
We have stated Proposition~\ref{p.quenched_QM} only for $j=1$ and $j$ even since the proof is less technical in these cases and since we will use this proposition only for $j=4$.
\end{rem}

In Section~\ref{s.other}, we will need the following quenched estimate whose proof is roughly the same as that of Proposition~\ref{p.quenched_QM}. We first need to introduce a notation: If $Q$ is a $r \times r$ square and $\alpha > 0$, we let $\alpha Q$ be the square concentric to $Q$ with side length $\alpha r$ and we let $\Circ_\delta(Q)$ be the event that there is a black circuit in the annulus $(1-\delta)Q \setminus (1-2\delta)Q$ and no white circuit in this annulus. \blue{Also, we let $\Circ_\delta^*(Q)$ be the event that there is a white circuit in the annulus $(1-\delta)Q \setminus (1-2\delta)Q$ and no black circuit in this annulus.}
\begin{lem}\label{l.lemme_for_alpha4}
Let $\gamma > 0$. There exists $\widetilde{\delta}=\widetilde{\delta}(\gamma) > 0$ such that, for every $\delta \in (0,\widetilde{\delta}]$, there exist $C =C(\delta,\gamma) <+\infty$, $c=c(\gamma)>0$ and $c'=c'(\delta,\gamma)>0$ such that, for every $r,R \geq 1$, the following holds: Let $Q$ be a $2r \times 2r$ square included in $B_R$ and at distance at least $R/3$ from the sides of $B_R$ and let $x$ denote the center of $Q$. Also, let $X$ be the $\pm 1$ indicator function of $\cross(R,R)$. Then, with probability larger than $1-Cr^{-\gamma}$ we have
\bi 
\item[i)] $\Prob^\eta \left[ \Piv^q_Q(\cross(R,R)) \right] \geq c \Prob^\eta \left[ \arm_4(x;r,R) \right]\,$, where $\arm_4(x;r,R)$ is the $4$-arm event translated by $x$,
\item[ii)] $\Prob^\eta \left[ \Circ_\delta(Q) \right] \geq c'\,$,
\item[iii)] $\Ex^\eta \left[ X \cond \Circ_\delta(Q) \cap \Piv^q_Q(\cross(R,R)) \right] > 1/4 \,$,
\item[iv)] $\Ex^\eta \left[ X \cond \Circ_\delta^*(Q) \cap \Piv^q_Q(\cross(R,R)) \right] < -1/4\,$.
\ei
\end{lem}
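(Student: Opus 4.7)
The proof follows the pattern of Proposition~\ref{p.quenched_QM}: we identify a ``good'' $\eta$-event on which all the standard pivotality, circuit and Russo-type arguments used to prove the analogous statements for Bernoulli percolation (cf. Section~4 of~\cite{schramm2010quantitative} or Chapter~4 of~\cite{garban2014noise}) can be run at the quenched level, replacing each annealed box-crossing input by Proposition~\ref{p.a_lot_of_quads} (and Remark~\ref{r.a_lot_of_quads}) and each spatial-independence input by the density events of Lemma~\ref{l.dense}. Concretely, with $x$ the center of $Q$, define the good event
\[
\calG_{\delta_0}^{\gamma'}(r,R) := \bigcap_{n \,:\, r/10 \leq 5^n \leq 10R} \Big( \dense_{\delta_0}(A(x;5^n,5^{n+1})) \cap \widetilde{\qbc}_{\delta_0}^{\gamma'}(A(x;5^n,5^{n+1})) \Big) \cap \calG_\partial,
\]
where $\calG_\partial$ is an analogous intersection of density and quenched box-crossing events taken in dyadic shells near each side of $\partial B_R$. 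Taking $\delta_0 = \delta_0(\delta,\gamma) > 0$ small enough and $\gamma' = \gamma'(\gamma)$ larger than the exponent needed in Proposition~\ref{p.quenched_QM}, Lemma~\ref{l.dense}, Proposition~\ref{p.a_lot_of_quads}, and a union bound over the $\grandO{\log R}$ scales yield $\Pro[\calG_{\delta_0}^{\gamma'}(r,R)] \geq 1 - Cr^{-\gamma}$ with $C = C(\delta,\gamma) < +\infty$. From now on I condition on $\eta \in \calG_{\delta_0}^{\gamma'}(r,R)$.

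For \textbf{(i)}, the pivotal event $\Piv^q_Q(\cross(R,R))$ contains the intersection of the translated $4$-arm event $\arm_4(x;2r,R/3)$ with four quad-crossings of prescribed colors extending each of the four arms from scale $R/3$ to the appropriate side of $B_R$. Since $\dist(Q,\partial B_R) \geq R/3$, these extension quads have aspect ratio $\grandO{1}$, so each has quenched crossing probability $\geq c(\gamma) > 0$. Quenched conditional independence across the disjoint extension regions and the quenched quasi-multiplicativity of Proposition~\ref{p.quenched_QM} then give
\[
\Prob^\eta \left[ \Piv^q_Q(\cross(R,R)) \right] \geq c\, \Prob^\eta \left[ \arm_4(x;r,R) \right].
\]
For \textbf{(ii)}, the event $\Circ_\delta(Q)$ is a gluing of four quads in $\mathcal{Q}_{c_0 \delta}(\cdot)$ (for the black circuit) together with ``blocking'' quad-crossings that preclude a white circuit in the same annulus; each of these has quenched probability $\geq c'(\delta,\gamma) > 0$ on the good event.

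The delicate step is \textbf{(iii)-(iv)}. On $\Piv^q_Q(\cross(R,R)) \cap \Circ_\delta(Q)$, the black circuit in $(1-\delta)Q \setminus (1-2\delta)Q$ together with the two black arms of the pivotal configuration already realize a black connection from the left side of $B_R$ to the outer boundary of $(1-2\delta)Q$ and back to the right side of $B_R$, while the absence of a white circuit prevents the two white arms (top and bottom) from forming a horizontal white blocker. The indicator of $\cross(R,R)$ thus reduces to the indicator of a black crossing of an inner quad in $(1-2\delta)Q$, and is independent (given $\eta$) of the outer configuration on which we conditioned. By the quenched box-crossing bound of Remark~\ref{r.a_lot_of_quads}, this conditional quenched probability is $\geq 3/4$ once $\widetilde{\delta}(\gamma)$ is chosen small enough, yielding (iii). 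Item (iv) is obtained by a symmetric argument using the $p=1/2$ duality: the event $\neg \Circ_\delta(Q) \cap \Piv^q_Q$ decomposes, up to an event of arbitrarily small quenched conditional probability absorbed in the choice of $\widetilde{\delta}$ and a slight enlargement of the good event, into the color-swapped version of $\Circ_\delta(Q) \cap \Piv^q_Q$, for which the same reasoning shows that a horizontal black crossing is blocked with quenched conditional probability $\geq 3/4$.

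The main obstacle is the last step: rigorously isolating the inside-of-$(1-2\delta)Q$ event that controls $\cross(R,R)$ once we condition on $\Piv^q_Q \cap \Circ_\delta(Q)$, and handling $\neg \Circ_\delta(Q)$ in (iv) in the Voronoi setting where the $p=1/2$ self-duality is only approximate. Both require a careful use of the density events to maintain the near-independence of the colorings on either side of $\partial (1-2\delta)Q$ and a precise choice of $\widetilde{\delta}$ to make the ``neither circuit'' and ``both circuits'' correction terms negligible.
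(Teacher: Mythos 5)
Your overall architecture matches the paper's: a good $\eta$-event built by intersecting $\dense$ and $\widetilde{\qbc}$ events over the $\grandO{\log R}$ relevant scales (cost $\grandO{1}r^{-\gamma}$ by Lemma~\ref{l.dense}, Proposition~\ref{p.a_lot_of_quads} and a union bound), separation of arms for item (i), and gluing for item (ii). Those two items are essentially fine, modulo the usual imprecision: a bare intersection of $\arm_4(x;2r,R/3)$ with four extension quads does not produce $\Piv^q_Q$ — the arms must land in well-separated prescribed segments before the extensions can be glued to them — but the separation-of-arms technology you invoke via Proposition~\ref{p.quenched_QM} is the right tool, and it is also what the paper uses.

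Items (iii)--(iv) contain genuine gaps. First, on $\Piv^q_Q(\cross(R,R)) \cap \Circ_\delta(Q)$ the two black arms are \emph{not} ``already'' connected to the black circuit: the arms stop at $\partial Q$, the circuit lives in $(1-\delta)Q\setminus(1-2\delta)Q$, and the shell $Q\setminus(1-\delta)Q$ in between is unexplored, so the connection is a nontrivial random event whose quenched probability is only bounded below by an RSW-type constant $\widetilde{c}(\gamma)$, which may be tiny. For the same reason your appeal to Remark~\ref{r.a_lot_of_quads} to get a conditional crossing probability $\geq 3/4$ cannot work: quenched box-crossing gives lower bounds by small constants, never bounds close to $1$. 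Getting a conditional expectation $>1/4$ rather than merely $>-1+c$ is the whole point of the lemma, and it needs the ingredient your sketch omits: conditionally on $\Piv^q_Q$, the interface endpoints on $\partial Q$ are $\delta_0 r$-separated with conditional probability $\geq 1-\epsilon$ (the event $B^{\delta_0}_{r,R}$ in the paper's proof), after which one takes $\delta_0/\delta$ small so that the favorable extension/connection events have conditional probability close to $1$. Second, your treatment of (iv) is false: $\neg\Circ_\delta(Q)$ is \emph{not}, up to a negligible event, the colour swap of $\Circ_\delta(Q)$. In a thin annulus the event ``no circuit of either colour'' has probability bounded away from $0$ (in fact close to $1$ for small $\delta$), and on it $X$ has conditional expectation near $0$; it cannot be absorbed into an error term. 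The correct observation is that the lemma is applied in Proposition~\ref{p.alpha_4_from_SS} only through the conditioning $\{C_\delta(j)=-1\}$, i.e.\ on ``white circuit and no black circuit'', which \emph{is} the colour swap of $\Circ_\delta(Q)$ and to which the argument for (iii) applies symmetrically; the step where you force the literal $\neg\Circ_\delta(Q)$ into that mould is the step that fails.
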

\begin{proof}
\blue{Let $\gamma>0$. We write the proof for $Q$ centered at $0$ (i.e.\ $x=0$ and $Q=B_r$) to simplify the notations, and we define $\gp^\gamma_{\cdot}(\cdot)$ as in~\eqref{e.gpgammadelta0}.}
\medskip

\blue{Let $\delta \in (0,1)$ and let $\eta \in \gp^\gamma_{\delta/100}(r)$. Until the last line of the proof, we work under the probability measure $\Prob^\eta$. We first note that Item~ii) holds for some $c'=c'(\delta,\gamma)$ because $\eta \in \gp^\gamma_{\delta/100}(r)$. Let us now study the quenched pivotal event. The event $\Piv^q_Q(\cross(R,R))$ is the event that there are: one black path from a cell whose center $x \in \eta$ belongs to $Q$ to the left side of $[-R,R]^2$, one black path from such a cell to the right side, one white path from such a cell the top side and one white path from such a cell to the bottom side. If we follow the proof of Proposition~\ref{p.quenched_QM} we obtain that Item~i) holds if $\delta$ is less than some constant that depends only on $\gamma$. We even obtain that:
\begin{enumerate}
\item i) holds but with $\Prob^\eta \left[ \arm_4(x;r,R) \right]$ replaced by $\Prob^\eta \left[ \arm_4(x;2r,R) \right]$,
\item let $\mathring{Q}$ be the union of all Voronoi cells whose center belongs to $Q$ and let $s^{int}(\mathring{Q},R)$ denote the minimum distance between the endpoints on the (topological) boundary of $\mathring{Q}$ of two interfaces that cross the topological annulus $[-R,R]^2 \setminus \mathring{Q}$. There exist $C,\eps>0$ (that depend on $\gamma$) such that, if $\delta_1 \ge \delta$, then $\Prob^\eta [s^{int}(\mathring{Q},R) \leq \delta_1 r] \leq C\delta_1^\eps$.   
\end{enumerate}
By using the above, the fact that $s^{int}(\mathring{Q},R)$ is $\Prob^\eta$-independent of $\arm_4(x;2r,R)$, and the fact that $\arm_4(x;2r,R) \supseteq  \Piv_Q^q(\cross(R,R))$, we obtain that
\[
\Prob^\eta \left[ s^{int}(\mathring{Q},R) \geq \delta_1 r \cond \Piv_Q^q(\cross(R,R)) \right] \geq 1 - C\delta_1^\eps \frac{\Prob^\eta [\arm_4(x;2r,R)]}{\Prob^\eta [\Piv_Q^q(\cross(R,R))]} \geq 1 - C\delta_1^\eps/c \, .
\]
Finally, by classical box-crossing arguments, one can prove that there exist $C',\eps'>0$ (that depend on $\gamma$) such that, if $\delta_1 \ge \delta$, then
\[
\Ex^\eta \left[ X \cond \Circ_\delta(Q) \cap \Piv^q_Q(\cross(R,R)) \cap \{ s^{int}(\mathring{Q},R) \geq \delta_1 r \} \right] \geq 1 - C' (\delta/\delta_1)^{\eps'} \, .
\]
These two inequalities and the fact that $ \Circ_\delta(Q)$ is $\Prob^\eta$-independent of $\Piv^q_Q(\cross(R,R))$ and of $\Piv^q_Q(\cross(R,R)) \cap \{ s^{int}(\mathring{Q},R) \geq \delta_1 r \}$ imply that
\begin{multline*}
\Ex^\eta \left[ X \cond \Circ_\delta(Q) \cap \Piv^q_Q(\cross(R,R)) \right]\\
\geq  \Ex^\eta \left[ X \cond \Circ_\delta(Q) \cap \Piv^q_Q(\cross(R,R)) \cap \{ s^{int}(\mathring{Q},R) \geq \delta_1 r \} \right]\\
\times \Prob^\eta \left[ s^{int}(\mathring{Q},R) \geq \delta_1 r \cond \Piv_Q^q(\cross(R,R)) \right]\\
\geq (1 - C' (\delta/\delta_1)^{\eps'} ) (1 - C\delta_1^\eps/c) \, .
\end{multline*}
This implies Item~iii) if $\delta > 0$ is sufficiently small (for instance by choosing $\delta_1=\sqrt{\delta}$). The proof of Item~iv) is the same. This ends the proof of the lemma since, as noted in the proof of Proposition~\ref{p.quenched_QM}, $\Pro \left[ \gp^\gamma_{\delta/100}(r) \right] \geq 1-C''r^{-\gamma}$ for some $C''=C''(\delta,\gamma)<+\infty$.}
\end{proof}

\section{Proof that $\alpha^{an}_j(r,R) \asymp \widetilde{\alpha}_j(r,R)$}\label{s.asymp}

In this section, we prove~\eqref{e.main2} of Theorem~\ref{t.quenched_arm}, i.e.\ we show that there exists a constant $C = C(j) < +\infty$ such that, for every $1 \leq r \leq R < +\infty$,
\[
\alpha_j^{an}(r,R) \leq \widetilde{\alpha}_j(r,R) \leq C \, \alpha_j^{an}(r,R) \, .
\]
\begin{proof}[Proof of~\eqref{e.main2} \red{from} Theorem~\ref{t.quenched_arm}] Let us first note that, by the quasi-multiplicativity property and~\eqref{e.poly}, it is sufficient to prove the result for $r$ sufficiently large and $r \leq R/2$. Let $j \in \N^*$ and let $r_0=r_0(j)<+\infty$ to be fixed later. We actually prove the following \blue{more quantitative} result: There exist $h > 0$ and $C=C(j)<+\infty$ such that, if $r_0$ is sufficiently large and if $r_0 \leq r \leq R/2$, then
\begin{equation}\label{e.sufficient_an_et_tilde_first}
0 \leq \widetilde{\alpha}_j(r,R)^2 - \alpha^{an}_j(r,R)^2 \leq C r^{-h} \alpha^{an}_j(r,R)^2 \, .
\end{equation}
First note that it is sufficient to prove that there exist $h > 0$ and $C'=C'(j)<+\infty$ such that, if $r_0$ is sufficiently large and if $r_0 \leq r \leq R/2$, then
\begin{equation}\label{e.sufficient_an_et_tilde}
0 \leq \widetilde{\alpha}_j(r,R)^2 - \alpha^{an}_j(r,R)^2 \leq C' r^{-h} \widetilde{\alpha}_j(r,R)^2 \, .
\end{equation}
Indeed, this implies~\eqref{e.sufficient_an_et_tilde_first} with $C=2C'$ if $r_0$ satisfies $C'r_0^{-h} \leq 1/2$.\\ 

Let us prove~\eqref{e.sufficient_an_et_tilde}. If we apply Proposition~\ref{p.martingale} to $E = \arm_j(r,R)$ and $\rho=2$, we obtain that
\[
\Var \left( \Prob^\eta \left[ \arm_j(r,R) \right] \right) \leq \sum_{S \text{ square of the grid } 2\Z^2} \E \left[ \Prob^\eta \left[ \Piv_S(\arm_j(r,R)) \right]^2 \right] \, .
\]
Let us use Lemmas~\ref{l.piv_arm_1} to~\ref{l.piv_arm_5} to estimate the right-hand-side of this inequality. We will also need the following three estimates on arm events (see Propositions~\ref{p.alpha_4} and~\ref{p.universal} and~\eqref{e.poly}):
\begin{equation}\label{e.alpha_4}
\widetilde{\alpha}_4(\rho) \leq \grandO{1}\rho^{-(1+\epsilon)} \, ,
\end{equation}
\begin{equation}\label{e.3arm}
\widetilde{\alpha}_3^{++}(\rho,\rho')  \leq \widetilde{\alpha}_3^+(\rho,\rho') \leq \grandO{1} \frac{\rho}{\rho'} \,  ,
\end{equation}
\begin{equation}\label{e.poly_()c}
\widetilde{\alpha}_3^{(++)^c}(\rho,\rho') \leq \grandO{1} \left( \frac{\rho}{\rho'} \right)^{\epsilon/2} \, .
\end{equation}
\blue{(The exponent $\epsilon/2$ above is only to simplify the calculations.)} We can (and do) assume that $\epsilon < 1/2$, which will make the calculations easier. Below, we use several times the quasi-multiplicativity property Proposition~\ref{p.QM} and the polynomial decay property~\eqref{e.poly} without mentioning it. Note that a  difference compared to similar calculations for Bernoulli percolation on $\Z^2$ or on the triangular lattice is that we do not know that the contribution of the $3$-arm event in the half-plane from scale $\rho$ to scale $\rho'$ is $(\rho/\rho')^2$: we only have the upper bound~\eqref{e.3arm}.

By Lemma~\ref{l.piv_arm_1}, the contribution of the boxes $S$ in $A(2r,R/2)$ is at most (where $2^k$ has to be thought of as the order of the distance between the box and $0$)
\begin{eqnarray*}
\sum_{k=\log_2(r)}^{\log_2(R)} 2^{2k} \, \widetilde{\alpha}_j(r,R)^2 \, \widetilde{\alpha}_4(2^k)^2 \leq \grandO{1} \,\widetilde{\alpha}_j(r,R)^2 \, r^{-2\epsilon} \text{ (by } \eqref{e.alpha_4}) \, .
\end{eqnarray*}

By Lemma~\ref{l.piv_arm_3}, we can estimate the contribution of the boxes outside of $B_{R/2}$ by summing only on the boxes that intersect $A(R/2,R)$. By Lemma~\ref{l.piv_arm_2}, the contribution of such boxes is at most (where $2^k$ has to be thought of as the order of the distance between the box and $\partial B_R$)
\begin{align*}
& \sum_{k=0}^{\log_2(R)} 2^k R \, \widetilde{\alpha}_j(r,R)^2 \, \widetilde{\alpha}_4(2^k)^2 \, \widetilde{\alpha}_3^+(2^k,R)^2\\
& \leq \grandO{1} \widetilde{\alpha}_j(r,R)^2 \sum_{k=0}^{\log_2(R)} 2^k R \, 2^{-2k(1+\epsilon)} \left( \frac{2^k}{R} \right)^2 \text{ (by }\eqref{e.alpha_4} \text{ and } \eqref{e.3arm})\\
& \leq \grandO{1} \widetilde{\alpha}_j(r,R)^2 \sum_{k=0}^{\log_2(R)} \frac{2^{k(1-2\epsilon)}}{R} \nonumber\\
& \leq \grandO{1} \widetilde{\alpha}_j(r,R)^2 R^{-2\epsilon} \, .
\end{align*}

The contribution of the boxes in $B_{2r}$ is a little more difficult to estimate. By Lemma~\ref{l.piv_arm_5}, we can estimate the contribution of these boxes by summing only on the boxes that intersect $A(r,2r)$. To estimate the contribution of such boxes, we can use Lemma~\ref{l.piv_arm_4} and we obtain the following: (here, $2^k$ has to the thought of as the order of the distance between the box and $\partial B_r$ and $2^j$ has to be thought of as the distance between \red{the projection of the box on $\partial B_r$} and the nearest corner of $B_r$)
\begin{multline*}
\sum_{k=0}^{\log_2(r)} \sum_{j=k}^{\log_2(r)} 2^{k+j} \, \widetilde{\alpha}_j(r,R)^2 \, \widetilde{\alpha}_4(2^k)^2 \, \widetilde{\alpha}_3^+(2^k,2^j)^2 \, \widetilde{\alpha}_3^{(++)^c}(2^j,r)^2\\
+ \sum_{k=0}^{\log_2(r)} \sum_{j=0}^{k} 2^{k+j} \, \widetilde{\alpha}_j(r,R)^2 \, \widetilde{\alpha}_4(2^k)^2 \, \widetilde{\alpha}_3^{(++)^c}(2^k,r)^2.
\end{multline*}
The second sum is of the same order as the first sum restricted to the terms that satisfy $j=k$, so it is sufficient to estimate the first sum, which is less than or equal to
\begin{align*}
& \grandO{1} \widetilde{\alpha}_j(r,R)^2 \sum_{k=0}^{\log_2(r)} 2^k \, 2^{-2k(1+\epsilon)} \sum_{j=k}^{\log_2(r)} 2^j \, \left( \frac{2^k}{2^j} \right)^2 \, \left( \frac{2^j}{r} \right)^\epsilon \text{ (by } \eqref{e.alpha_4}\text{, } \eqref{e.3arm} \text{ and } \eqref{e.poly_()c}) \\
& \leq \grandO{1} \widetilde{\alpha}_j(r,R)^2 r^{-\epsilon}  \sum_{k=0}^{\log_2(r)} 2^k \, 2^{-2k(1+\epsilon)} 2^{k(1+\epsilon)}\\
& = \grandO{1} \widetilde{\alpha}_j(r,R)^2 r^{-\epsilon} \sum_{k=0}^{\log_2(r)} 2^{-k\epsilon}\\
& \leq \grandO{1} \widetilde{\alpha}_j(r,R)^2 r^{-\epsilon} \, .
\end{align*}
Altogether,
\[
\widetilde{\alpha}_j(r,R)^2-\alpha_j^{an}(r,R)^2 = \Var \left( \Prob^\eta \left[ \arm_j(r,R) \right] \right) \leq \grandO{1} \widetilde{\alpha}_j(r,R)^2 r^{-\epsilon}  \, ,
\]
and thus the estimate~\eqref{e.sufficient_an_et_tilde} is proved, which ends the proof.
\end{proof}

\begin{rem}
By exactly the same proof (i.e.\ by proving analogues of Lemmas~\ref{l.piv_arm_1} to~\ref{l.piv_arm_5} for arm events in \red{the half-plane etc.}), we obtain~\eqref{e.main2} of Theorem~\ref{t.quenched_arm} also for the quantities $\alpha_k^+(\cdot,\cdot)$, $\alpha_k^{++}(\cdot,\cdot)$ and $\alpha_k^{(++)^c}(\cdot,\cdot)$.
\end{rem}

\section{Strict inequality for the exponent of the annealed percolation function}\label{s.strict}

Let us prove Theorem~\ref{t.strict} by using the scaling relations from~\cite{scaling_voro} and the estimate~\eqref{e.main2} from Theorem~\ref{t.quenched_arm}. The estimate~\eqref{e.main2} will be used to prove the following:
\begin{prop}\label{p.alpha_4_geq}
There exists $\epsilon > 0$ such that, for every $1 \leq r \leq R < +\infty$,
\[
\alpha_4^{an}(r,R) \geq \epsilon \, \frac{1}{\alpha_1^{an}(r,R)} \left( \frac{r}{R} \right)^{2-\epsilon} \, .
\]
\end{prop}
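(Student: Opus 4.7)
The conclusion is equivalent to
\[
\alpha_1^{an}(r,R)\,\alpha_4^{an}(r,R)\;\geq\;\Omega(1)\,\left(\tfrac{r}{R}\right)^{2-\epsilon}
\]
for some $\epsilon>0$. The approach is to derive this by combining the universal $5$-arm exponent $\alpha_5^{an}(r,R)\asymp(r/R)^2$ (Proposition~\ref{p.universal}(iii)) with a quantitative strengthening of the Reimer-type bound sketched in the introduction. Indeed, writing $\arm_5(r,R)=\arm_4(r,R)\square\arm_1(r,R)$ (a $5$-arm configuration is precisely a $4$-arm witness together with a cell-disjoint black $1$-arm, and conversely), then applying quenched Reimer, Cauchy--Schwarz, and~\eqref{e.main2} gives at any scale
\[
\alpha_5^{an}(r,R)\;\leq\;\grandO{1}\,\alpha_4^{an}(r,R)\,\alpha_1^{an}(r,R),
\]
which together with $\alpha_5^{an}(r,R)\asymp(r/R)^2$ only yields the case $\epsilon=0$. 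The target is therefore to upgrade this to
\[
\alpha_5^{an}(r,R)\;\leq\;\grandO{1}\,(r/R)^{\epsilon}\,\alpha_4^{an}(r,R)\,\alpha_1^{an}(r,R)\qquad\text{for some }\epsilon>0,
\]
after which the proposition follows by a single division.

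\smallskip

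To gain the extra factor $(r/R)^{\epsilon}$ I would run a multi-scale Reimer argument. Fix a large integer $M\geq 2$, split $[r,R]$ into $n\asymp\log_M(R/r)$ geometric scales $r_k:=M^kr$, and at each annulus $A(r_k,r_{k+1})$ apply the same chain ``quenched Reimer $+$ Cauchy--Schwarz $+$~\eqref{e.main2}''. The central claim is that, for $M$ chosen large enough, the single-scale bound can be made strict with a uniform constant gap
\[
\alpha_5^{an}(r_k,r_{k+1})\;\leq\;\frac{1}{1+\kappa}\,\alpha_4^{an}(r_k,r_{k+1})\,\alpha_1^{an}(r_k,r_{k+1}),\qquad\kappa>0,
\]
independent of $k$ and of $r$. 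Granting this claim, applying the annealed quasi-multiplicativity of Proposition~\ref{p.QM} to all three arm events and multiplying over the $n$ scales converts the constant factor $(1+\kappa)^{-n}$ into $(R/r)^{-\log_M(1+\kappa)}=(r/R)^{\log_M(1+\kappa)}$, yielding the improved Reimer bound with $\epsilon=\log_M(1+\kappa)>0$. Combining with $\alpha_5^{an}(r,R)\asymp(r/R)^2$ then concludes.

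\smallskip

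\textbf{Main obstacle.} The hard point is establishing the uniform strict Reimer improvement $\kappa>0$ at a single fixed-ratio annulus. Since $\arm_5$ and $\arm_4\square\arm_1$ coincide as events, the gap cannot come from set strictness; it must come from a quantitative strengthening of Reimer using the joint structure of $\arm_4$ and $\arm_1$. I would argue it at the quenched level and show that on an $\eta$-typical environment, the quenched Reimer inequality is strict by a definite multiplicative factor. The mechanism is that, by standard box-crossing considerations in $A(r_k,r_{k+1})$, given a $\arm_4$-witness the pool of $\arm_1$-witnesses forced to be disjoint from it has a strictly smaller $\Prob^\eta$-mass than the unrestricted pool; the loss is made uniform in the environment using the quenched box-crossing Proposition~\ref{p.a_lot_of_quads}, the quenched quasi-multiplicativity Proposition~\ref{p.quenched_QM}, and the separation-of-arms input of Lemma~\ref{l.lemme_for_alpha4}, which together transport the classical deterministic-lattice manipulations to the Voronoi environment on a set of $\Pro$-probability $1-\grandO{1}\,M^{-\gamma}$. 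The final passage from the resulting quenched gap back to the annealed inequality uses~\eqref{e.main2} and Cauchy--Schwarz one last time, exactly as in the introductory computation.
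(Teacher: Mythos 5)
Your overall architecture matches the paper's: both routes reduce the proposition to showing $\alpha_5^{an}(r,R)\leq \grandO{1}\,(r/R)^{\epsilon}\,\alpha_1^{an}(r,R)\,\alpha_4^{an}(r,R)$ and then divide by the universal estimate $\alpha_5^{an}\asymp(r/R)^2$. But the step you yourself flag as the main obstacle is genuinely missing, and even granting it, the way you combine scales would not close the argument. On the single-scale claim: there is no off-the-shelf quantitative strict Reimer inequality, and the mechanism you sketch ("the pool of disjoint $\arm_1$-witnesses has strictly smaller $\Prob^\eta$-mass") is a heuristic, not a proof — Reimer's proof is combinatorial and yields no uniform deficit. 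The paper (following Beffara, Appendix~A of~\cite{garban2010fourier}) extracts the gain from a concrete and different source: the \emph{winding number} of the arms. On a good environment ($\dense$ plus $\qbc$ events), the $5$-arm event forces the interfaces, hence the extra black arm, to have bounded winding in $A(\rho,M\rho)$, and the quenched probability of a $1$-arm with bounded winding is at most $M^{-\epsilon}\,\Prob^\eta\left[\arm_1(\rho,M\rho)\right]$; quenched Reimer is then used only in its ordinary, non-strict form to bound $\Ex^\eta\left[Y^3\un_{Y\geq 4}\right]$ by $\grandO{1}\,\Prob^\eta\left[\arm_4(\rho,M\rho)\right]$. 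Crucially this produces a per-scale gain $M^{-\epsilon}$ that grows polynomially in $M$, not merely a constant factor $\frac{1}{1+\kappa}$.

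This matters for your second step. Each application of Proposition~\ref{p.QM} to each of $\alpha_5^{an},\alpha_4^{an},\alpha_1^{an}$ costs a fixed multiplicative constant $C>1$, so over $n\asymp\log_M(R/r)$ scales you accumulate a factor $C^{\Theta(n)}=(R/r)^{\Theta(\log C/\log M)}$, which cancels your $(1+\kappa)^{-n}$ unless $1+\kappa$ exceeds those constants — a requirement your proposal never addresses and which a "uniform constant gap" cannot be guaranteed to meet. This is exactly why one needs the per-scale gain to be $M^{-\epsilon}$ and then chooses $M$ large enough that $M^{\epsilon}$ dominates the quasi-multiplicativity constants, which is how the paper concludes. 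In short: correct skeleton, but the essential quantitative input (the winding-number estimate) is absent, and the bookkeeping of constants in the multiscale step is incorrect as stated.
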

Let us first explain why Proposition~\ref{p.alpha_4_geq} (with $r=1$) and Theorem~\ref{t.scaling_rel} imply Theorem~\ref{t.strict}.
\begin{proof}[Proof of Theorem~\ref{t.strict}]
By the two scaling relations of Theorem~\ref{t.scaling_rel}, we have
\[
\theta(p) \frac{1}{p-1/2} \asymp \alpha_1^{an}(L(p)) L(p)^2 \alpha_4^{an}(L(p)) \, .
\]
Since we know that $L(p)$ goes to $+\infty$ polynomially fast in $\frac{1}{p-1/2}$ as $p$ goes to $1/2$ (see Subsection~1.4 of~\cite{scaling_voro}), it is sufficient to prove that $\alpha_1^{an}(L(p)) L(p)^2 \alpha_4^{an}(L(p)) \geq \Omega(1) L(p)^\epsilon$ for some $\epsilon > 0$, which is given by Proposition~\ref{p.alpha_4_geq}.
\end{proof}

\begin{proof}[Proof of Proposition~\ref{p.alpha_4_geq}]
We follow Appendix~A of~\cite{garban2010fourier}, where the analogous result is proved for Bernoulli percolation on $\Z^2$ by Beffara. Let $M \geq 100$ and let $\rho \geq M$. Also, let $\gp(\rho,M)$ be defined as follows:
\[
\gp(\rho,M) = \bigcap_{k=0}^{\lfloor \log_5 \left( M \right) \rfloor-1} \dense_{1/100} \left( A(5^k\rho,10 \cdot 5^k \rho) \right) \cap \qbc_{1/100}^3 \left( A(5^k\rho,10 \cdot 5^k \rho) \right)
\]
(where the events ``$\dense$'' and ``$\qbc$'' are the events defined in Subsection~\ref{ss.good}; $\gp$ means ``Good Point process''). By Lemma~\ref{l.dense} and Remark~\ref{r.a_lot_of_quads}, we have
\[
\Pro \left[ \gp(\rho,M) \right] \geq 1-\grandO{1} \rho^{-3} \, .
\]
If $\eta \in \gp(\rho,M)$ and if we follow the beginning of Appendix~A of~\cite{garban2010fourier} (where the authors study the winding number of arms), we obtain that (if $M$ is sufficiently large):
\[
\Prob^\eta \left[ \arm_5(\rho,M\rho) \right] \leq M^{-\epsilon} \, \Prob^\eta \left[ \arm_1(\rho,M \rho) \right] \, \Ex^\eta \left[ Y^3 \un_{Y \geq 4} \right] \, ,
\]
where $Y$ is the number of interfaces from $\partial B_{\rho}$ to $\partial B_{M \rho}$ and where $\epsilon \in (0,1)$ depends only on the box-crossing constant $c=c(1/100,3)$ from Remark~\ref{r.a_lot_of_quads}. Indeed, the fact that $\eta \in \gp(\rho,M)$ implies that we can apply the independence arguments and the box-crossing arguments from Appendix~A of~\cite{garban2010fourier}.\\
Still as in Appendix~A of~\cite{garban2010fourier}, we have $\Ex^\eta \left[ Y^3 \un_{Y \geq 4} \right] \leq C \, \Prob^\eta \left[ \arm_4(\rho,M\rho) \right]$ for some $C<+\infty$ that depends only on the constant $c=c(1/100,3)$ from Remark~\ref{r.a_lot_of_quads}. Indeed, what is used in~\cite{garban2010fourier} to prove this estimate is Reimer's inequality (that holds for the quenched probability measure $\Prob^\eta$) and the fact that i) $\Prob^\eta \left[ \arm_1(\rho,M \rho) \right] \leq M^{-a}$ and ii) $\Prob^\eta \left[ \arm_4(\rho,M\rho) \right] \geq M^{-b}$ for some $a,b \in (0,+\infty)$. The properties i) and ii) follow from classical box-crossing arguments that we can use since $\eta \in \gp(\rho,M)$. Altogether,
\[
\alpha^{an}_5(\rho, M \rho) = \E \left[ \Prob^\eta \left[ \arm_5(\rho,M\rho) \right] \right] \leq C M^{-\epsilon} \, \E \left[ \Prob^\eta \left[ \arm_1(\rho,M \rho) \right] \Prob^\eta \left[ \arm_4(\rho,M\rho) \right] \right] + \grandO{1} \rho^{-3} \, .
\]
If we apply the Cauchy-Schwarz inequality and if we use Proposition~\ref{p.universal} to estimate the probability of the $5$-arm event, we obtain that
\begin{eqnarray*}
M^{-2} \asymp \alpha^{an}_5(\rho, M \rho) & \leq & C M^{-\epsilon}\sqrt{\E \left[\Prob^\eta \left[ \arm_1(\rho,M \rho) \right]^2 \right] \E \left[ \Prob^\eta \left[ \arm_4(\rho,M\rho) \right]^2 \right]} + \grandO{1} \rho^{-3}\\
& = & C M^{-\epsilon} \widetilde{\alpha}_1(\rho,M \rho) \widetilde{\alpha}_4(\rho,M\rho) + \grandO{1} \rho^{-3} \, .
\end{eqnarray*}
By~\eqref{e.main2} of Theorem~\ref{t.quenched_arm}, the quantities $\alpha^{an}_j(\cdot,\cdot)$ are of same order as the quantities $\widetilde{\alpha}_j(\cdot,\cdot)$; hence, the above implies that there exists $\epsilon'>0$ such that, if $M$ is sufficiently large, then for every $\rho \geq M$,
\[
M^{-2} \leq M^{-\epsilon'} \alpha_1^{an}(\rho,M \rho) \alpha_4^{an}(\rho,M\rho) \, .
\]
Now, the proof is a direct consequence of the quasi-multiplicativity property.
\end{proof}

\begin{rem}
Note that, if we follow the proof of Proposition~\ref{p.alpha_4_geq}, we obtain the following for every $j \in \N^*$: $\alpha_{2j+1}^{an}(r,R) \leq \grandO{1} \left( \frac{r}{R} \right)^{\Omega(1)} \alpha_1^{an}(r,R) \, \alpha_{2j}^{an}(r,R)$, where the constants in $\grandO{1}$ and $\Omega(1)$ only depend on $j$.
\end{rem}

\section{Other estimates on arm events}\label{s.other}

The last goal of this paper is to obtain the quantitative estimates~\eqref{e.main} from Theorem~\ref{t.quenched_arm} and~\eqref{e.maincross} from Theorem~\ref{t.quenched_cross}. In order to prove these results, we need two other estimates on arm events that we prove in this section. We have (see Section~\ref{s.estimates} for the notation $\alpha^{an,(++)^c}_3(r,R)$):
\begin{lem}\label{l.()c}
Let $1 \leq r \leq R$. Then,
\[
\alpha^{an,(++)^c}_3(r,R) \leq \grandO{1} \frac{r}{R} \, .
\] 
\end{lem}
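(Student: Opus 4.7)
The bound $r/R$ matches the universal two-arm half-plane exponent from Proposition~\ref{p.universal}(i), so my strategy is to reduce a three-arm event in the three-quarter plane to a two-arm (alternating-color) event in some half-plane $H \subseteq \mathcal{W}$, and then invoke that proposition. Place $\mathcal{W} = \R^2 \setminus \{x<0,\, y<0\}$, so that $\mathcal{W}$ is the union of $H_\uparrow = \{y \geq 0\}$ and $H_\rightarrow = \{x \geq 0\}$. More generally, any half-plane whose boundary line passes through the origin at a direction $\phi$ with $\phi\in[0,\pi/2]$ (with the half-plane on the ``positive'' side) is contained in $\mathcal{W}$.

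The first step is a pigeonhole argument on arm endpoints. Given three non-crossing arms $a_1, a_2, a_3$ with alternating colors, their inner endpoints $p_1, p_2, p_3$ lie on an arc $\partial B_r \cap \mathcal{W}$ of angular length $3\pi/2$. Two consecutive endpoints therefore satisfy an angular separation $\leq 3\pi/4 < \pi$, so they lie in a common half-disk; say $(p_i, p_{i+1})$ is such a pair, and note that $a_i,a_{i+1}$ have different colors (by alternation). By the non-crossing property combined with standard separation-of-arms arguments (analogous to those used in the proof of Proposition~\ref{p.quenched_QM}), the corresponding outer endpoints $q_i, q_{i+1}$ can be taken to satisfy the same angular separation bound up to a constant factor in probability.

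The second step, which I expect to be the main obstacle, is to confine the two arms $a_i,a_{i+1}$ themselves (not merely their endpoints) to a single half-plane $H \subseteq \mathcal{W}$. A priori the arms may wander through the wing $\mathcal{W}\setminus H$. I would address this using multi-scale RSW / box-crossing arguments (Proposition~\ref{p.a_lot_of_quads}): any excursion of an arm out of $H$ and back can be rerouted inside $H$ at the cost of at most a multiplicative constant in probability, via the standard gluing/extension technique using box-crossings at each dyadic scale. This is possible because $H$ still occupies two-thirds of the angular extent of $\mathcal{W}$ and the box-crossing constants in $\widetilde{\qbc}^\gamma_{\delta_0}(\cdot)$ are uniform in the scale.

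Combining these two steps gives, up to a union bound over finitely many orientations of $H$,
\[
\alpha_3^{an,(++)^c}(r,R) \;\leq\; C \,\alpha_2^{an,+}(r,R) \;\asymp\; r/R,
\]
the last estimate being Proposition~\ref{p.universal}(i). The delicate point is genuinely the rerouting of arms from $\mathcal{W}$ into a half-plane, since unlike the argument for universal exponents inside a fixed half-plane, here we must handle the non-convexity of $\mathcal{W}$ near the missing corner; the quenched box-crossing property and its uniformity in scale supplied by Proposition~\ref{p.a_lot_of_quads} and Remark~\ref{r.a_lot_of_quads} are exactly what is needed to make this rigorous.
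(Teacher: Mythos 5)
Your reduction to the two-arm half-plane event contains a gap that I do not think can be repaired: the ``rerouting'' step in your second paragraph is not merely delicate, it fails at the level of exponents. Constraining the endpoints of two opposite-colour arms to a half-disk does not constrain the arms themselves, so what your pigeonhole argument actually produces is (at best) a two-arm event in the full three-quarter plane with endpoints in a prescribed half-plane; the probability of that event is of the same order as $\alpha_2^{an,(++)^c}(r,R)$, not as $\alpha_2^{an,+}(r,R)$. These two quantities are not comparable: enlarging the ambient wedge from angle $\pi$ to angle $3\pi/2$ strictly decreases the arm exponent (heuristically from $1$ to $2/3$), so $\alpha_2^{an,(++)^c}(r,R)$ is of strictly larger order than $r/R$, and no box-crossing/gluing argument can confine the arms to $H$ at the cost of a multiplicative constant --- RSW gluing yields ``restricted $\geq \Omega(1) \cdot$ unrestricted'' only when the restriction concerns endpoint positions and local separation, not global confinement to a subdomain of smaller angular extent. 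More structurally, any proof that discards one of the three arms is doomed, because the two-arm event in the three-quarter plane already has probability much larger than $r/R$; the third arm is precisely what makes the bound $r/R$ true.

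The paper's proof keeps all three arms and is a counting argument rather than a reduction: one places $N \asymp R/r$ disjoint translates $Q_1,\dots,Q_N$ of $B_r$ inside $B_{R/2}$ and defines, for each $j$, the event $B(r,R;j)$ that there exist a black left--right crossing of $B_R$ and a white path from $\partial Q_j$ to the top side of $B_R$, both avoiding the quarter plane rooted at the centre of $Q_j$. These $N$ events are pairwise disjoint, so $\min_j \Pro\left[ B(r,R;j) \right] \leq 1/N$, and separation-of-arms arguments (Proposition~2.5 and Lemma~4.3 of \cite{scaling_voro}) give $\Pro\left[ B(r,R;j) \right] \geq \Omega(1)\, \alpha_3^{an,(++)^c}(r,R)$ for every $j$, whence the claim. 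If you want to salvage your line of attack you would have to keep the third arm in play throughout, at which point you are essentially forced back to a disjointness argument of this type.
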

\begin{proof}
Let $N=\lfloor R/(4r) \rfloor$ and let $Q_1=B_r, Q_2, \cdots, Q_N$ be the $2r \times 2r$ squares defined in Figure~\ref{f.()c} (note that these squares are included in $B_{R/2}$). For every $j \in \{1,\cdots,N\}$, we define the following event: $B(r,R;j)$ is the event that there exist two paths $\gamma_1$ and $\gamma_2$ such that: i) $\gamma_1$ is a black path included in $B_R$ from the left side of $B_R$ to its right side, ii) $\gamma_2$ is a white path included in $B_R$ from $\partial Q_j$ to the top side of $B_R$, iii) $\gamma_1$ and $\gamma_2$ do not intersect the quarter plane $\{ x_j+(a,b) \, : \, a,b \leq 0 \}$ where $x_j$ is the center of $Q_j$.

\begin{figure}[!h]
\begin{center}
\includegraphics[scale=0.485]{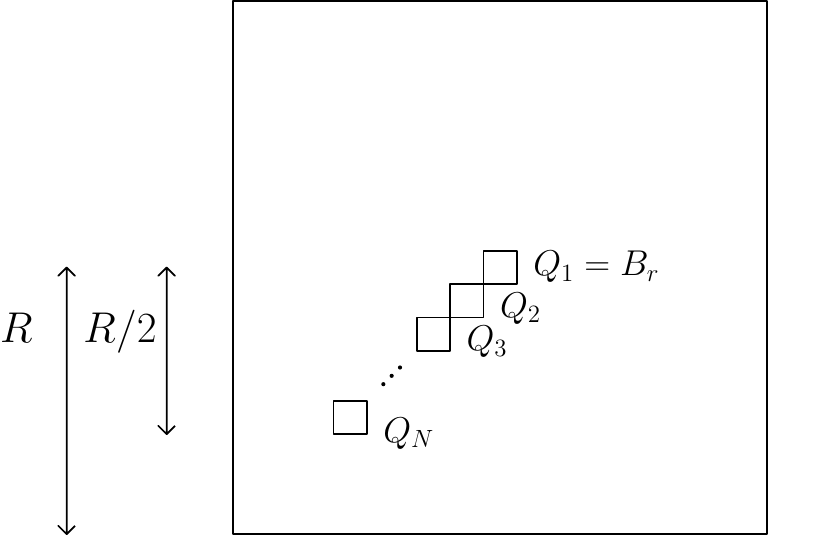}
\includegraphics[scale=0.485]{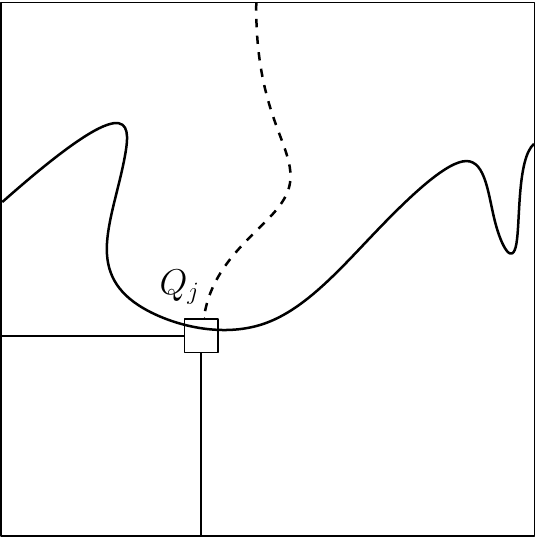}
\end{center}
\caption{The $2r \times 2r$ squares $Q_j$ and the events $B(r,R;j)$.}\label{f.()c}
\end{figure}
Note that the events $B(r,R;1), \cdots, B(r,R;N)$ are pairwise disjoint, hence
\[
\inf_{j=1}^N \Pro \left[ B(r,R;j) \right] \leq \frac{1}{N} \, .
\]
As a result, it is sufficient for our purpose to prove that $\Pro \left[ B(r,R;j) \right] \geq \Omega(1) \alpha_3^{(++)^c}(r,R)$ where the constants in $\Omega(1)$ are absolute constants. For Bernoulli percolation on $\Z^2$ or on the triangular lattice, this comes from separation of arms results. For Voronoi percolation, we have proved separation of arm results in~\cite{scaling_voro} and we have deduced for instance that $\alpha^{an}_4(r,R)$ is at most some constant times the probability that there exist two black paths from $\partial B_r$ to the left and right sides of $B_R$ and two white paths from $\partial B_r$ to the top and bottom sides of $B_R$ (see Lemma~4.3 therein). Since the proof that $\Pro \left[ B(r,R;j) \right] \geq \Omega(1) \alpha_3^{(++)^c}(r,R)$ is the same, we refer to~\cite{scaling_voro} and leave the details to the reader (the only difference is that we need to use Proposition~2.5 of~\cite{scaling_voro} for arm events in the plane without the quarter plane instead of for arm events in the plane, but the proof is the same).
\end{proof}

\blue{To prove the following result, we rely a lot on the quenched properties from Section~\ref{s.estimates}. The main difficulty (compared to Proposition \ref{p.alpha_4}) is that this is a multiscale estimate. See Appendix B of \cite{schramm2011scaling} for the proof of this result for bond percolation on $\Z^2$.}
\begin{prop}\label{p.alpha_4_from_SS}
For every $\epsilon > 0$ there exists $C=C(\epsilon) < +\infty$ such that for every $1 \leq r \leq R$ we have
\[
\alpha_4^{an}(r,R) \leq C \, \left( \frac{r}{R} \right)^{1-\epsilon} \sqrt{\alpha^{an}_2(r,R)} \, .
\]
In particular, there exists $\delta > 0$ such that for every $1 \leq r \leq R$ we have
\[
\alpha_4^{an}(r,R) \leq \frac{1}{\delta} \left( \frac{r}{R} \right)^{1+\delta} \, .
\]
\end{prop}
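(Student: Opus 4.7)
My plan is to first prove the main inequality $\alpha_4^{an}(r,R) \leq C (r/R)^{1-\epsilon}\sqrt{\alpha_2^{an}(r,R)}$, and then derive the second assertion as an immediate corollary. The corollary is painless: from the polynomial decay~\eqref{e.poly} we have $\alpha_2^{an}(r,R) \leq C_0 (r/R)^{1/C_0}$ for some $C_0 \geq 1$, so the first inequality yields $\alpha_4^{an}(r,R) \leq C' (r/R)^{1-\epsilon+1/(2C_0)}$; picking $\epsilon = 1/(4C_0)$ produces an exponent $1+\delta$ with $\delta = 1/(4C_0) > 0$, as required.

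For the main inequality I would follow the multiscale strategy outlined in Subsection~\ref{ss.idea}: fix $M \gg 1$ and prove the single-scale bound $\alpha_4^{an}(\rho,M\rho) \leq C(M)\,M^{-(1-\epsilon)}\sqrt{\alpha_2^{an}(\rho,M\rho)}$ for all $\rho \geq M$, then bootstrap to arbitrary $r \leq R$ using the quasi-multiplicativity of $\alpha_4^{an}$ and $\alpha_2^{an}$ (Proposition~\ref{p.QM}); both sides of the inequality multiply across scales up to universal constants, so a single-scale statement is enough.

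For the single-scale bound I would condition on a good event $\gp(\rho,M)$ for the Poisson process $\eta$, modelled on the one in~\eqref{e.gpgammadelta0}, combining the density event (Lemma~\ref{l.dense}) and the quenched box-crossing event (Proposition~\ref{p.a_lot_of_quads} and Remark~\ref{r.a_lot_of_quads}); this event has probability at least $1 - O(\rho^{-3})$ and on it all standard gluing and independence arguments work at the quenched level. I would then assemble the following ingredients:
\begin{itemize}
\item the universal $5$-arm identity $\alpha_5^{an}(\rho,M\rho) \asymp M^{-2}$ (Proposition~\ref{p.universal});
\item Reimer's inequality at the quenched level applied to the disjoint inclusion $\arm_5 \subseteq \arm_3 \circ \arm_2$ (and to the colour decomposition of $\arm_4$ into two disjoint monochromatic $2$-arm events);
\item the identity~\eqref{e.main2} proved in Section~\ref{s.asymp}, allowing passage between $\widetilde{\alpha}_j$ and $\alpha_j^{an}$ for $j \in \{1,2,3\}$;
\item Lemma~\ref{l.()c}, which controls $3$-arm contributions in the plane minus a quarter-plane by $r/R$;
\item a Beffara-style winding/counting argument in the spirit of Proposition~\ref{p.alpha_4_geq}, but now applied to $\arm_4$, producing on the good event the geometric factor $M^{-(1-\epsilon)}$ when $\arm_2$ is extended to $\arm_4$.
\end{itemize}
A well-placed Cauchy--Schwarz should then combine these to give, on $\gp(\rho,M)$, a quenched inequality of the form $\Prob^\eta[\arm_4(\rho,M\rho)] \leq M^{-(1-\epsilon)} \sqrt{\Prob^\eta[\arm_2(\rho,M\rho)]}$ up to a factor whose annealed expectation is $O(1)$; averaging over $\eta$ and applying Cauchy--Schwarz once more, together with~\eqref{e.main2}, yields the single-scale bound.

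The main obstacle is arranging the Cauchy--Schwarz so that exactly one factor of $\Prob^\eta[\arm_2]$ (not two, not zero) ends up under the square root, producing $\sqrt{\alpha_2^{an}}$ rather than $\alpha_2^{an}$ or $(\alpha_2^{an})^2$ on the right-hand side. This is where the colour symmetry at $p=1/2$ plays a key role: pointwise in $\eta$, $\Prob^\eta[\arm_{2b}] = \Prob^\eta[\arm_{2w}]$, which allows one to decouple the two black arms from the two white arms of $\arm_4$ and pair only one of the resulting factors with the factor produced by the universal $5$-arm relation $\alpha_5 \asymp M^{-2}$. The precise combinatorics of the winding argument (extending Proposition~\ref{p.alpha_4_geq}'s treatment of $Y^3 \un_{Y \geq 4}$ to an analogous estimate for $\arm_4$) is the technical heart and where most of the care is needed.
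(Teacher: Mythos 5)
Your reduction to a single scale via quasi-multiplicativity, and your derivation of the second assertion from the first via the polynomial decay \eqref{e.poly} of $\alpha_2^{an}$, both match the paper and are fine. The gap is in the core of the argument: the single-scale bound $\alpha_4^{an}(\rho,\rho M)\leq \grandO{1}\,M^{-1}\sqrt{\alpha_2^{an}(\rho,\rho M)}$ cannot be obtained from the ingredients you list. Reimer's inequality and the universal $5$-arm exponent are the tools of Proposition~\ref{p.alpha_4_geq}, which produces a \emph{lower} bound on $\alpha_4^{an}$ by comparing $\alpha_5^{an}$ with $\alpha_1^{an}\alpha_4^{an}$; no Reimer-type product decomposition of $\arm_4$ or $\arm_5$ produces the specific combination $M^{-1}\sqrt{\alpha_2^{an}}$ (a product bound for $\arm_4$ would give something like $\widetilde{\alpha}_{2}^{\,2}$, not $\sqrt{\alpha_2^{an}}$ times a geometric factor), and the winding argument behind $\Ex^\eta[Y^3\un_{Y\geq4}]$ yields a gain $M^{-\epsilon}$ between consecutive arm numbers, not a square root of a $2$-arm probability. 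Colour symmetry at $p=1/2$ does not rescue this. Also, Lemma~\ref{l.()c} and \eqref{e.main2} play no role here.

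The missing idea is the second-moment argument of Garban (Appendix~B of~\cite{schramm2011scaling}), run at the quenched level. One tiles $B_{\rho M}$ with $N\asymp M^2$ squares $Q_j$ of side $\rho$, sets $X=\pm\un_{\cross(\rho M,\rho M)}$, and attaches to each $Q_j$ a centered variable $C_\delta(j)\in\{-1,0,1\}$ recording a black versus white circuit in a thin annulus inside $Q_j$, truncated by the event $Y_j$ that the exploration interface from a corner of $B_{\rho M}$ comes within distance $\rho$ of $Q_j$. On a good $\eta$-event (Lemma~\ref{l.lemme_for_alpha4}, which packages quenched pivotality, separation of arms and box-crossings), one has the lower bound $\Ex^\eta[XC_\delta(j)Y_j]=\Ex^\eta[XC_\delta(j)]\geq\Omega(1)\,\Prob^\eta[\arm_4(x_j;\rho,\rho M)]$, while the variables $C_\delta(j)Y_j$ are pairwise orthogonal in $L^2(\Prob^\eta)$ because the interface reaches one box before the other. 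A single Cauchy--Schwarz then gives $\sum_j\Ex^\eta[XC_\delta(j)Y_j]\leq\grandO{1}\sqrt{\sum_j\Prob^\eta[Y_j]}$, and $Y_j$ forces a $2$-arm event around $x_j$; summing over the $M^2$ boxes and taking expectations yields $M^2\alpha_4^{an}(\rho,\rho M)\leq\grandO{1}\,M\sqrt{\alpha_2^{an}(\rho,\rho M)}$. The square root of $\alpha_2^{an}$ thus comes from $L^2$-orthogonality along the exploration path, which is the step your sketch does not supply and which your proposed tools cannot replace.
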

\begin{proof}
We follow the proof of the analogous result for bond percolation on $\Z^2$ by Garban from Appendix~B of~\cite{schramm2011scaling}. To this purpose, we use both the annealed quasi-multiplicativity property Proposition~\ref{p.QM} and the quenched properties from Subsection~\ref{ss.quenched_prop}. We let $M \in [100,+\infty)$ to be fixed later and we consider some $\rho \in [10 M,+\infty)$. Note that, by the annealed quasi-multiplicativity property, it is sufficient to prove that, if $M$ is sufficiently large, then
\begin{equation}\label{e.sufficient_alpha_4}
\alpha^{an}_4(\rho,\rho M) \leq \grandO{1} M^{-1} \sqrt{\alpha^{an}_2(\rho,\rho M)} \, .
\end{equation}
Let us prove this estimate. We need the following notations: \red{we let $Q_1,\cdots,Q_N$ be the $\rho \times \rho$ squares of the grid $\rho \Z^2$ that are included in the square $B_{\rho M}$ and are at distance at least $\rho M / 3$ from the sides of this square. Note that $N \asymp M^2$.} We also write $X$ for the $\pm 1$ indicator function of $\cross(\rho M, \rho M)$. If $\alpha \in (0,1)$, we let $\alpha Q_j$ denote the square concentric to $Q_j$ with side length $\alpha \rho$. Also for $\delta \in (0,1)$, we let $C_\delta(j)$ denote the random variable that equals
\bi 
\item $1$ if there is a black circuit in the annulus $A_j(\delta):=(1-\delta)Q_j \setminus (1-2\delta)Q_j$ and no white circuit in $A_j(\delta)$,
\item $-1$ if there is a white circuit in the annulus $A_j(\delta):=(1-\delta)Q_j \setminus (1-2\delta)Q_j$ and no black circuit in $A_j(\delta)$,
\item $0$ otherwise.
\ei
Note that $\Ex^\eta \left[ C_\delta(j) \right] = 0$ for every $j$ and $\eta$. Let $\gamma$ be some sufficiently large constant to be fixed later. Write $x_j$ for the center of $Q_j$ and, for every $x \in \R^2$ and every $k \in \N^*$, let $\arm_k(x;\cdot,\cdot)$ be the $k$-arm event $\arm_k(\cdot,\cdot)$ translated by $x$. By Lemma~\ref{l.lemme_for_alpha4} (and by \red{the union bound}), we can choose $\delta$ sufficiently small so that, with probability at least $1-C M^2 \rho^{-\gamma}$, for every $j$ we have
\begin{eqnarray}
\Prob^\eta \left[ \Piv^q_{Q_j}(\cross(\rho M,\rho M)) \right] & \geq & c \Prob^\eta \left[ \arm_4(x_j;\rho,\rho M) \right] \, ,\label{e.poiu1}\\
\Prob^\eta \left[ C_\delta(j)=-1 \right] = \Prob^\eta \left[ C_\delta(j)=1 \right] & \geq & c' \, ,\label{e.poiu2}\\
\Ex^\eta \left[ X \cond \{ C_\delta(j)=1 \} \cap \Piv^q_{Q_j}(\cross(\rho M,\rho M)) \right]&  > & 1/4 \, ,\label{e.poiu3}\\
\Ex^\eta \left[ X \cond \{ C_\delta(j)=-1 \} \cap \Piv^q_{Q_j}(\cross(\rho M,\rho M)) \right]&  < & -1/4 \, ,\label{e.poiu4}
\end{eqnarray}
for some constants $C=C(\delta,\gamma)$, $c=c(\gamma)$ and $c'=c'(\delta,\gamma)$. We fix such a $\delta$. Below, the constants in the $\grandO{1}$'s and $\Omega(1)$'s may depend on $\delta$ and $\gamma$. Next, we define the following event:
\[
\dense_\delta(\rho,M) = \bigcap_{Q \text{ square of the grid } \rho \Z^2 \text{ included in } B_{\rho M}} \dense_{\delta/100}(Q) \, .
\]
By Lemma~\ref{l.dense}, $\Pro \left[ \dense_\delta(\rho,M) \right] \geq 1-\grandO{1} M^2 \exp(-\Omega(1)\rho^2)$. Now, assume that $\eta \in \dense_\delta(\rho,M)$ and that $\eta$ is such that~\eqref{e.poiu1} to~\eqref{e.poiu4} hold, and let us explain how we can follow Appendix~B of~\cite{schramm2011scaling} in order to obtain that
\begin{equation}\label{e.M_and_alpha2}
\sum_j \Prob^\eta \left[ \arm_4(x_j;\rho,\rho M) \right] \leq \grandO{1} \sqrt{ \sum_j \Prob^\eta \left[ \arm_2(x_j;3\rho, \rho M /3) \right]} \, .
\end{equation}
\begin{proof}[Proof of~\eqref{e.M_and_alpha2}]
In this proof, we fix some $\eta \in \dense_\delta(\rho,M)$ that satisfies~\eqref{e.poiu1} to~\eqref{e.poiu4} and we work under the probability measure $\Prob^\eta$.

As in Appendix~B of~\cite{schramm2011scaling}, we look at the interface that goes from the cell that contains the top-right corner of $B_{\rho M}$ to the cell that contains the bottom-right corner of this square, with black boundary condition on the right side and white boundary condition on the other sides. Moreover, we let $Y_j$ be the indicator of the event that the distance between the interface and $Q_j$ is at most $\rho$. \blue{Since $\eta \in \dense_\delta(\rho,M)$, $X$ is independent of $C_\delta(j)$ on $\{ Y_j = 0 \}$ and $C_\delta(j)$ is independent of $Y_j$, hence
\begin{align*}
\Ex^\eta \left[ X C_\delta(j) Y_j \right]& =  \Ex^\eta \left[ X C_\delta(j) \right] - \Ex^\eta \left[ X C_\delta(j) \un_{Y_j = 0} \right]\\
&= \Ex^\eta \left[ X C_\delta(j) \right] - \red{ \frac{\Ex^\eta \left[ X \un_{Y_j=0} \right] \Ex^\eta \left[  C_\delta(j) \un_{Y_j = 0} \right]}{\Prob^\eta \left[ Y_j = 0 \right]}} \\
&= \Ex^\eta \left[ X C_\delta(j) \right] - \Ex^\eta \left[ X \un_{Y_j=0} \right] \Ex^\eta \left[  C_\delta(j) \right]\\
&= \Ex^\eta \left[ X C_\delta(j) \right] \, ,
\end{align*}
where the last equality holds because $\Ex^\eta \left[  C_\delta(j) \right]=0$.}

\blue{Next, we observe that $X$ is independent of $C_\delta(j)$ on $\{ \neg \Piv_{Q_j}^q(\cross(\rho M,\rho M)) \}$ and $C_\delta(j)$ is independent of $\Piv_{Q_j}^q(\cross(\rho M,\rho M))$. This observation and the fact that $\Ex^\eta \left[  C_\delta(j) \right]=0$ imply that
\begin{multline*}
\Ex^\eta \left[ X C_\delta(j) Y_j \right]=\Ex^\eta \left[ X C_\delta(j) \right]\\
=\Prob^\eta \left[ \Piv^q_{Q_j}(\cross(\rho M,\rho M)) \right] \Ex^\eta \left[ X C_\delta(j) \cond \Piv^q_{Q_j}(\cross(\rho M,\rho M)) \right].
\end{multline*}
By using once again that $C_\delta(j)$ is independent of $\Piv_{Q_j}^q(\cross(\rho M,\rho M))$, we obtain that the above equals
\begin{align*}
&\Prob^\eta \left[ \Piv^q_{Q_j}(\cross(\rho M,\rho M)) \right] \Big( \Ex^\eta \left[ X \cond \{ C_\delta(j) = 1 \} \cap \Piv^q_{Q_j}(\cross(\rho M,\rho M)) \right] \Pro \left[ C_\delta(j) = 1 \right]\\
& \hspace{1em} -\Ex^\eta \left[ X \cond \{ C_\delta(j) = -1 \} \cap \Piv^q_{Q_j}(\cross(\rho M,\rho M)) \right] \Pro \left[ C_\delta(j) = -1 \right] \Big) \, .
\end{align*}}
By~\eqref{e.poiu1} to~\eqref{e.poiu4}, this implies that
\[
\Ex^\eta \left[ X C_\delta(j) Y_j \right]  \geq \Omega(1) \Prob^\eta \left[ \arm_4(x_j;\rho,\rho M) \right] \, .
\]
As in~\cite{schramm2011scaling}, one also has
\[
\Ex^\eta \left[ C_\delta(i) Y_i C_\delta(j) Y_j \right] = 0 \text{ if } i \neq j \, .
\]
Indeed, we can let $k \in \{ i,j \}$ be such that the interface reaches the $\rho$-neighbourhood of $Q_k$ before the $\rho$-neighbourhood of $Q_l$ where $\{l\} = \{ i,j \} \setminus \{ k \}$, we can write $\mathcal{G}$ for the $\sigma$-algebra \red{(on $\{-1,1\}^\eta$; recall that we work under the probability measure $\Prob^\eta$)} generated by the colours of the Voronoi cells visited by the interface until it reaches the $\rho$-neighbourhood of $Q_l$ and by the colours of the Voronoi cells in $Q_k$, and we can note that $Y_i,Y_j,C_\delta(k)$ are $\mathcal{G}$-measurable and that $C_\delta(l)$ is independent of $\mathcal{G}$.
As in~\cite{schramm2011scaling}, we can then apply the Cauchy-Schwarz inequality to obtain that
\[
\sum_j \Ex^\eta \left[ X C_\delta(j) Y_j \right] \leq \grandO{1} \sqrt{ \sum_j \Ex^\eta \left[ Y_j \right]} \, .
\]
This ends the proof since $\Ex^\eta \left[ Y_j \right] \leq \Prob^\eta \left[ \arm_2(x_j;3\rho,\rho M/3) \right]$.
\end{proof}
If we take the expectation of the left and right sides of~\eqref{e.M_and_alpha2}, we obtain that
\begin{multline*}
\sum_j \E \left[ \Prob^\eta \left[ \arm_4(x_j;\rho,\rho M) \right] \right]\\
 \leq \grandO{1} \E \left[ \sqrt{\sum_j \Prob^\eta \left[ \arm_2(x_j;3\rho,\rho M/3) \right] } \right] + \grandO{1} M^2 \rho^{-\gamma} + \grandO{1} M^2 \exp(-\Omega(1)\rho^2) \, .
\end{multline*}
By Jensen's inequality and since $\rho \geq M$, we have
\begin{multline*}
\sum_j \E \left[ \Prob^\eta \left[ \arm_4(x_j;\rho,\rho M) \right] \right]\\
 \leq \grandO{1} \sqrt{\sum_j \E \left[ \Prob^\eta \left[ \arm_2(x_j;3\rho,\rho M/3) \right] \right]} + \grandO{1} M^{2-\gamma} + \grandO{1} M^2 \exp(-\Omega(1)M^2)
\end{multline*}
i.e.\ (by translation invariance of the annealed probability measure)
\[
M^2 \alpha_4^{an}(\rho,\rho M) \leq \grandO{1} M \sqrt{ \alpha^{an}_2(3 \rho,\rho M/3)} + \grandO{1} M^{2-\gamma} + \grandO{1} M^2 \exp(-\Omega(1)M^2) \, .
\]
Since the probabilities of arm events decay polynomially fast (see~\eqref{e.poly}), we can choose $\gamma$ sufficiently large so that for every sufficiently large $M$ we have $\grandO{1} M^{-\gamma} + \grandO{1} \exp(-\Omega(1)M^2) \leq \grandO{1} M^{-1} \sqrt{\alpha^{an}_2(3\rho,\rho M/3)}$. For these choices of $M$ and $\gamma$ we obtain
\[
\alpha^{an}_4(\rho,\rho M) \leq \grandO{1} M^{-1} \sqrt{\alpha^{an}_2(3\rho,\rho M/3)} \leq \grandO{1} M^{-1} \sqrt{\alpha^{an}_2(\rho,\rho M)} \, ,
\]
where the second inequality is a direct consequence of~\eqref{e.poly} and of the quasi-multiplicativity property. This implies~\eqref{e.sufficient_alpha_4} and ends the proof of the proposition.
\end{proof}

\section{Quantitative quenched estimates}\label{s.quant}

Let us now prove~\eqref{e.main} of Theorem~\ref{t.quenched_arm} by using~\eqref{e.main2} and the estimates from Section~\ref{s.other}.
\begin{proof}[Proof of~\eqref{e.main} \red{from} Theorem~\ref{t.quenched_arm}]
The proof is very close to the proof of~\eqref{e.main2} from Theorem~\ref{t.quenched_arm}. The difference is that now we can use that the quantities $\widetilde{\alpha}_k(\cdot,\cdot)$ are of the same order as the quantities $\alpha^{an}_k(\cdot,\cdot)$. As a result, we can use Lemmas~\ref{l.piv_arm_1} to~\ref{l.piv_arm_5} with $\alpha^{an}_k(\cdot,\cdot)$ instead of $\widetilde{\alpha}_k(\cdot,\cdot)$. The estimates on arm events that we are going to use are the following (see~Propositions~\ref{p.alpha_4_from_SS},~\ref{p.alpha_4_2} and~\ref{p.universal} and Lemma~\ref{l.()c}):
\begin{equation}\label{e.2alpha_4}
\alpha_4^{an}(\rho,\rho') \leq \grandO{1} \left( \frac{\rho}{\rho'} \right)^{1+\epsilon} \, ,
\end{equation}
\begin{equation}\label{e.23arm}
\alpha^{an,++}_3(\rho,\rho') \leq \alpha^{an,+}_3(\rho,\rho') \asymp \left( \frac{\rho}{\rho'} \right)^2 \leq \grandO{1} \alpha^{an}_4(\rho,\rho') \left( \frac{\rho}{\rho'} \right)^{\epsilon} \, ,
\end{equation}
\begin{equation}\label{e.2poly_()c}
\alpha_3^{an,(++)^c}(\rho,\rho') \leq \grandO{1} \frac{\rho}{\rho'} \, ,
\end{equation}
for some $\epsilon > 0$.\\
If we apply Proposition~\ref{p.martingale} to $E = \arm_j(r,R)$ and $\rho=2$, we obtain that
\[
\Var \left( \Prob^\eta \left[ \arm_j(r,R) \right] \right) \leq \sum_{S \text{ square of the grid } 2\Z^2} \E \left[ \Prob^\eta \left[ \Piv_S(\arm_j(r,R)) \right]^2 \right] \, .
\]
Let us now use Lemmas~\ref{l.piv_arm_1} to~\ref{l.piv_arm_5} with $\alpha^{an}_k(\cdot,\cdot)$ instead of $\widetilde{\alpha}_k(\cdot,\cdot)$. As in the proof of~\eqref{e.main2}, we use the quasi-multiplicativity property and the polynomial decay property without mentioning it. By the same considerations as in the proof of \eqref{e.main2}, we obtain that the contribution of the boxes $S$ in $A(2r,R/2)$ is at most
\begin{eqnarray*}
\sum_{k=\log_2(r)}^{\log_2(R)} 2^{2k} \, \alpha^{an}_j(r,R)^2 \, \alpha^{an}_4(2^k)^2 & \leq & \grandO{1} \,\alpha^{an}_j(r,R)^2 \, \alpha_4(r)^2 \sum_{k=\log_2(r)}^{\log_2(R)} 2^{2k} \alpha_4(r,2^k)^2\\
& \leq & \grandO{1} \,\alpha^{an}_j(r,R)^2 \, \alpha_4(r)^2 r^2 \text{ by~\eqref{e.2alpha_4}} \, .
\end{eqnarray*}
(Note that, in order to obtain the above estimate, Proposition~\ref{p.alpha_4} is not enough and we need the multiscale estimate Proposition~\ref{p.alpha_4_from_SS}.) The contribution of the boxes outside of $B_{R/2}$ is at most
\begin{multline*}
\sum_{k=0}^{\log_2(R)} 2^k R \, \alpha^{an}_j(r,R)^2 \, \alpha^{an}_4(2^k)^2 \, \alpha^{an,+}_3(2^k,R)^2\\
\leq \grandO{1} \alpha^{an}_j(r,R)^2 \, \alpha^{an}_4(R)^2 \sum_{k=0}^{\log_2(R)} 2^k R \frac{1}{\alpha_4^{an}(2^k,R)^2} \alpha^{an,+}_3(2^k,R)^2 \\
\leq \grandO{1} \alpha^{an}_j(r,R)^2 \, \alpha^{an}_4(R)^2 \sum_{k=0}^{\log_2(R)} 2^k R  \left( \frac{2^k}{R} \right)^{2\epsilon-4} \left( \frac{2^k}{R} \right)^4\\
\leq \grandO{1} \alpha^{an}_j(r,R)^2 \, R^2 \, \alpha^{an}_4(R)^2 \, .
\end{multline*}
The contribution of the boxes in $B_{2r}$ is at most
\begin{align*}
& \sum_{k=0}^{\log_2(r)} \sum_{j=k}^{\log_2(r)} 2^{k+j} \, \alpha^{an}_j(r,R)^2 \, \alpha^{an}_4(2^k)^2 \, \alpha^{an,+}_3(2^k,2^j)^2 \,\alpha^{an,(++)^c}_3(2^j,r)^2\\
& \leq \grandO{1} \alpha^{an}_j(r,R)^2 \sum_{k=0}^{\log_2(r)} 2^k \, \sum_{j=k}^{\log_2(r)} 2^j \alpha^{an}_4(2^j)^2 \frac{\alpha^{an,+}_3(2^k,2^j)^2}{\alpha^{an}_4(2^k,2^j)^2} \,\alpha^{an,(++)^c}_3(2^j,r)^2\\
& \leq \grandO{1} \alpha^{an}_j(r,R)^2 \sum_{k=0}^{\log_2(r)} 2^k \, \sum_{j=k}^{\log_2(r)} 2^j \alpha^{an}_4(2^j)^2 \left( \frac{2^k}{2^j} \right)^{2\epsilon} \, \left( \frac{2^j}{r} \right)^2\\
& = \grandO{1} r^{-2} \alpha^{an}_j(r,R)^2 \sum_{k=0}^{\log_2(r)} 2^{k(1+2\epsilon)} \, \sum_{j=k}^{\log_2(r)} 2^{j(3-2\epsilon)} \alpha^{an}_4(2^j)^2\\
& = \grandO{1} r^{-2} \alpha^{an}_j(r,R)^2 \sum_{j=0}^{\log_2(r)} 2^{j(3-2\epsilon)} \alpha^{an}_4(2^j)^2 \sum_{k=0}^{j} 2^{k(1+2\epsilon)} \\
& \leq \grandO{1} r^{-2} \alpha^{an}_j(r,R)^2 \sum_{j=0}^{\log_2(r)} 2^{4j} \alpha^{an}_4(2^j)^2 \\
& \leq \grandO{1} \alpha^{an}_j(r,R)^2 \, r^2 \, \alpha^{an}_4(r)^2 \, .
\end{align*}
Finally,
\begin{multline*}
\Var \left( \Prob^\eta \left[ \arm_j(r,R) \right] \right) \leq \grandO{1}\alpha^{an}_j(r,R)^2 \left( r^2 \, \alpha^{an}_4(r)^2 +  R^2 \, \alpha^{an}_4(R)^2 \right)\\ \leq \grandO{1}\alpha^{an}_j(r,R)^2 \, r^2 \, \alpha^{an}_4(r)^2 \, ,
\end{multline*}
which ends the proof.
\end{proof}

We end the paper by proving the quantitative quenched estimate Theorem~\ref{t.quenched_cross}.

\begin{proof}[Proof of Theorem~\ref{t.quenched_cross}]
If we apply Proposition~\ref{p.martingale} to $E=\cross(\lambda R,R)$ and $\rho=2$ we obtain that
\[
\Var \left( \Prob^\eta \left[ \cross(\lambda R,R) \right] \right) \leq \sum_{S \text{ square of the grid } 2\Z^2} \E \left[ \Prob^\eta \left[ \Piv_S(\cross(\lambda R,R)) \right]^2 \right] \, .
\]
By using analogues of Lemmas~\ref{l.piv_arm_1},~\ref{l.piv_arm_2},~\ref{l.piv_arm_3},~\ref{l.piv_arm_4} and~\ref{l.piv_arm_5} for crossing events and by using the fact that we know that the quantities $\widetilde{\alpha}_k(\cdot,\cdot)$ are of the same order as the quantities $\alpha^{an}_k(\cdot,\cdot)$ (i.e. by following the proof of~\eqref{e.main}), we obtain that this sum is less than or equal to
\[
\grandO{1} \, R^2 \, \alpha_4^{an}(R)^2 \, ,
\]
which ends the proof.
\end{proof}

\bibliographystyle{alpha}
\bibliography{ref_perco}

\ni
{\bf Hugo Vanneuville} \\
Univ. Lyon 1\\
UMR5208, Institut Camille Jordan, 69100 Villeurbanne, France\\
vanneuville@math.univ-lyon1.fr\\
\url{http://math.univ-lyon1.fr/~vanneuville/}\\
Supported by the ERC grant Liko No 676999\\

\end{document}